\documentclass[12pt,a4paper]{article}
\usepackage{fullpage}
\usepackage[utf8]{inputenc}
\usepackage{amsfonts}
\usepackage{amsmath}
\usepackage{amssymb}
\usepackage{amsthm}
\usepackage{amscd}   
\usepackage{hyperref}
\usepackage[usenames]{color}
\usepackage{tikz}
\usetikzlibrary{calc,intersections,through,backgrounds,arrows}

\usepackage[refpage]{nomencl}

\newtheorem{theorem}{Theorem}[section]
\newtheorem{lemma}[theorem]{Lemma}
\newtheorem{proposition}[theorem]{Proposition}

\newtheorem{corollary}[theorem]{Corollary}

\theoremstyle{definition}
\newtheorem{definition}[theorem]{Definition}
\newtheorem{example}[theorem]{Example}
\newtheorem{remark}[theorem]{Remark}
\newtheorem{notation}[theorem]{Notation}

\renewcommand\le{\leqslant}
\renewcommand\ge{\geqslant}

\def\bbZ{\mathbb Z}

\def\bbP{\mathbb P}

\def\mcA{\mathcal A}
\def\mcB{\mathcal B}
\def\mcC{\mathcal C}
\def\mcU{\mathcal U}
\def\mcV{\mathcal V}

\def\mcZ{\mathcal Z}

\def\a{\mathfrak a}

\def\b{\mathfrak b}

\def\Id{\mathrm{Id}}

\def\Par{\mathrm{Par}\,}
\def\Zero{\mathbf{0}}
\def\One{\mathbf{1}}

\def\SET{\mathrm{SET}}
\def\SEQ{\mathrm{SEQ}}
\def\CYC{\mathrm{CYC}}

\def\mcE{\mathcal E}

\def\mcL{\mathcal L}       

\def\mcP{\mathcal P}      
\def\mcCP{\mathcal{C}}   
\def\p{\mathfrak p}       

\def\mcF{\mathcal F}      

\def\mcG{\mathcal G}      
\def\mcCG{\mathcal{CG}}   
\def\mcT{\mathcal T}      
\def\mcIT{\mathcal{IT}}   
\def\it{\mathfrak{it}}    

\def\mcD{\mathcal D}      
\def\d{\mathfrak d}       

\def\mcQSS{\mathcal{QSS}} %
\def\qss{\mathfrak {qss}} %

\def\mcS{\mathcal S}        
\def\mcQSS{\mathcal{QSS}}   
\def\qss{\mathfrak {qss}}   
\def\mcCQSS{\mathcal{CQSS}} 
\def\mcPS{\mathcal{PS}}     
\def\ps{\mathfrak {ps}}     
\def\mcCPS{\mathcal{CPS}}   

\def\mcGEM{\mathcal{GEM}}   
\def\gem{\mathfrak {gem}}   
\def\mcCGEM{\mathcal{CGEM}} 

\author{Thierry Monteil\footnote{
    LIPN, CNRS (UMR 7030), Universit\'e Paris 13, F-93430 Villetaneuse, France.\newline
    Email: \texttt{thierry.monteil@lipn.univ-paris13.fr}}
    \and
    Khaydar Nurligareev\footnote{
    LIP6, CNRS (UMR 7606), Sorbonne Universit\'e, F-75005 Paris, France.\newline
    LIB, Université de Bourgogne, F-21078 Dijon, France.\newline
    LIPN, CNRS (UMR 7030), Universit\'e Paris 13, F-93430 Villetaneuse, France.\newline
    Email: \texttt{khaydar.nurligareev@lip6.fr}}
  }
\date{}

\title{Asymptotic probability of irreducibles III: Anti-SEQ}

\begin{document}

\maketitle

\begin{abstract}
In this paper, we study the structure of the complete asymptotic expansion of the probability that a large combinatorial object is connected or consists of a~given number of connected components.
For rapidly growing 
labeled families of structures, the coefficients involved in these expansions are possibly negative integers.
Using species theory, we interpret these coefficients as the difference between the counting sequences of two derivative species of structures.
In particular, we show that this difference can be viewed as the counting sequence of the virtual species obtained with the help of an ``anti-$\SEQ$'' operator applied to the initial family of structures.
Applications include $P$-angulated discrete surfaces, quadratic square-tiled surfaces, and non-orientable graph encoded manifolds, which were not reachable with our previous methods.

Moving on to the weighted species, we establish the whole structure of the asymptotic expansion of the probability that a~graph is connected in the Erd\H{o}s-Rényi model $G(n,p)$.
Here, the asymptotic coefficients are polynomials in $\frac{p}{1-p}$ and can be described both in terms of simple graphs and irreducible  tournaments with ties.
We also provide general asymptotic results for sequence and cycle decomposition, as well as the complete asymptotic expansion of the probability that a random labeled tournament with ties is irreducible.
\end{abstract}

\section{Introduction}

This paper belongs to a series that aim to provide precise bounds on the probability that a combinatorial object is irreducible, for some notions of irreducibility.

In~\cite{MonteilNurligareevSET}, we provided a method to describe the whole structure of the asymptotic expansion of the probability that a large combinatorial object is connected.
For instance, for simple graphs, we showed that the asymptotic probability that a random labeled simple graph $g$ with $n$~vertices is connected satisfies
  \begin{equation*}
    \mathbb{P}\big(g\mbox{ is connected}\big)
     \approx 1 - 
    \sum\limits_{k\ge1}
     \it_k \cdot
     \binom{n}{k} \cdot
     \frac{2^{k(k+1)/2}}{2^{kn}},
  \end{equation*}
where $\it_k$ denotes the number of irreducible tournaments of size $k$.
In particular, the first terms of this asymptotic expansion are
  $$
    \mathbb{P}\big(g\mbox{ is connected}\big)
     = 1 - 
    \binom{n}{1}\frac{1}{2^{n-1}} - 
    2\binom{n}{3}\frac{1}{2^{3n-6}} - 
    24\binom{n}{4}\frac{1}{2^{4n-10}} + 
    O\left(\frac{n^5}{2^{5n}}\right).
  $$

In this example, the emergence of the derivative structure (irreducible tournaments) with respect to the first structure (connected graphs) can be explained as follows.
Although the labeled classes of simple graphs and tournaments are not isomorphic, they have the same counting sequence $2^{\binom{n}{2}}$.
From an enumerative perspective, this fact leads to a double decomposition: a graph can be decomposed into a~set of its connected components (connected graphs), while a tournament can be decomposed into a sequence of its strongly connected components (irreducible tournaments).
Thus, the counting sequence of irreducible tournaments acts as a derivative sequence with respect to the counting sequence of connected graphs.

More generally, the following result holds~\cite{MonteilNurligareevSET}.

\begin{theorem}
\label{theorem:SET-asymptotics}
  Let $\mcA$ be a gargantuan labeled combinatorial class that admits a double decomposition $\mcA = \SET(\mcB) = \SEQ(\mcD)$ for some labeled combinatorial classes $\mcB$ and~$\mcD$.
  Suppose that $a\in\mcA$ is a random object of size~$n$.
  In this case,
  \begin{equation*}
    \mathbb{P}(a\in\mcB)
     \approx 1 - 
    \sum\limits_{k\ge1}
     \d_k \cdot \binom{n}{k} \cdot \frac{\a_{n-k}}{\a_n},
  \end{equation*}
  where $(\a_n)$ and $(\d_n)$ are the counting sequences of the classes $\mcA$ and $\mcD$, respectively.
\end{theorem}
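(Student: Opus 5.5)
The plan is to combine the two decompositions at the level of exponential generating functions, and then transfer the resulting identity to the coefficients through a Bender-type lemma for gargantuan sequences. This is the framework the paper inherits from \cite{MonteilNurligareevSET}.

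First, write $A(z)=\sum_n \a_n z^n/n!$, and let $B(z)$ and $D(z)$ be the generating functions of $\mcB$ and $\mcD$. The two decompositions give
\[
A(z)=e^{B(z)}, \qquad A(z)=\frac{1}{1-D(z)}.
\]
Hence $B(z)=\log A(z)$ and $D(z)=1-1/A(z)$. The key algebraic observation is that
\[
B'(z)=\frac{A'(z)}{A(z)}=A'(z)\bigl(1-D(z)\bigr)=A'(z)-A'(z)D(z).
\]
Extracting the coefficient of $z^{n-1}/(n-1)!$ gives the exact identity
\[
\mathfrak b_n=\a_n-\sum_{k\ge1}\binom{n-1}{k}\d_k\,\a_{n-k}.
\]
Here $\mathfrak b_n$ is the counting sequence of $\mcB$; I write it that way because the paper's macro \verb|\b| collides with a text accent. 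Dividing by $\a_n$ yields
\[
\mathbb P(a\in\mcB)=1-\sum_{k\ge1}\d_k\binom{n-1}{k}\frac{\a_{n-k}}{\a_n},
\]
which is exact for every $n$. So the whole content is asymptotic: we need to pass from $\binom{n-1}{k}$ to $\binom{n}{k}$, and to show that truncating the sum at any fixed $r$ leaves an error of order $\a_{n-r}/\a_n$ up to polynomial factors.

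The second step is to fix this discrepancy, and here I would reorganise via $\binom{n}{k}=\binom{n-1}{k}+\binom{n-1}{k-1}$. A cleaner route is to use $nB$ instead of $B'$, that is, $zB'(z)$. Then $\mathfrak b_n/\a_n$ differs from the displayed sum by terms of the shape $\binom{n-1}{k-1}\d_k\a_{n-k}/\a_n$. Equivalently, I would apply the known Bender theorem for $F(z,y)=\log y$: if $\a_{n-1}=o(\a_n)$ and $\sum_{k=r}^{n-r}\binom nk\a_k\a_{n-k}=O(\a_{n-r})$ (this is the gargantuan hypothesis), then
\[
\mathfrak b_n\approx\sum_{k\ge0}\binom nk \a_{n-k}\,[z^k/k!]\,\partial_yF(z,A(z)).
\]
Since $\partial_y\log y=1/A(z)=1-D(z)$, this gives exactly $\a_n-\sum_{k\ge1}\binom nk\d_k\a_{n-k}$, which is the claimed formula. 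This is likely the route used in \cite{MonteilNurligareevSET}.

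The main obstacle is justifying the Bender transfer, namely showing that the remainder after $r$ terms is $O(\a_{n-r}/\a_n)$. To do this I would bound $|\d_k|\le\a_k$, using $D=1-1/A$ and the nonnegativity of $\d_k$ as a counting sequence. The tail $\sum_{k=r}^{n-r}\binom nk\d_k\a_{n-k}$ is then dominated by the gargantuan convolution bound. Near the ends, the terms with $k>n-r$ are handled by expressing them through the first few $\a_j$; these contribute $O(\a_{n-r})$ after symmetrisation.

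One more check is needed: the terms of the sum are themselves asymptotically decreasing. Gargantuan growth gives $\a_{n-k-1}/\a_{n-k}\to0$ uniformly for $k$ fixed, so the expansion is a genuine asymptotic scale.
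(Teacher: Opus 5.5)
Your decisive step is the paper's own argument. You apply Theorem~\ref{theorem: Bender} to $u_n=\a_n/n!$ (that is, $U=A-1$) with $F(x)=\log(1+x)$, so that $W=1/A=1-D$. The paper obtains the statement exactly this way, as the case $m=1$ of Theorem~\ref{theorem: species E asymptotics} combined with $1/A=1-D$ (Lemma~\ref{lemma: magical species identity}). Since ``gargantuan'' is precisely Bender's hypothesis on $(\a_n/n!)$ and $\log(1+x)$ is analytic at $0$, that step alone is a complete proof. Everything around it should be cut, because several of those claims are false.

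\emph{The exact-identity route.} The identity $\b_n=\a_n-\sum_{k=1}^{n-1}\binom{n-1}{k}\d_k\a_{n-k}$ is correct. Using $zB'$ gives the same identity, since $\binom nk\frac{n-k}{n}=\binom{n-1}{k}$. But it cannot be truncated, because the terms with $k$ near $n$ are not negligible. For graphs, the $k=n-1$ term is $\d_{n-1}\a_1/\a_n\sim 2^{\binom{n-1}{2}-\binom n2}=2^{1-n}$. This term is exactly what turns the $(n-1)2^{1-n}$ of your identity into the correct $n2^{1-n}$. If the end terms really were $O(\a_{n-r})$ (or even $O(n^r\a_{n-r})$), your identity would give an expansion with $\binom{n-1}{k}$. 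That expansion differs from the statement by $\d_1\a_{n-1}/\a_n=2^{1-n}$, far larger than the error $O(n^2 4^{-n})$. So your ``symmetrisation'' claim is wrong. Recovering the $\binom nk$ form from the exact identity would require recursively re-expanding the $\d_{n-j}$, which amounts to redoing Bender's proof.

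\emph{Normalization.} With $u_n=\a_n/n!$, the gargantuan hypothesis reads $n\a_{n-1}=o(\a_n)$ and $\sum_{k=r}^{n-r}\binom nk\a_k\a_{n-k}=O(n^r\a_{n-r})$. The remainder after the terms $k<r$ is then $O(n^r\a_{n-r}/\a_n)$, not $O(\a_{n-r}/\a_n)$. Your version of condition (ii) is false as written: the summand $k=r$ alone is $\binom nr\a_r\a_{n-r}$. Your version of (i) is too weak to make $\binom nk\a_{n-k}/\a_n$ an asymptotic scale; the extra factor $n$ is needed.

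\emph{The ``main obstacle'' paragraph.} A single convolution bound using $\d_k\le\a_k$ is not how Bender's theorem is proved. Its proof must control $[z^n]U^j$ uniformly in $j$ and uses the analyticity of $F$. You do not need to reprove it; cite it, as the paper does.
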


Theorem~\ref{theorem:SET-asymptotics} is seen to apply in the setting where labeled combinatorial classes admit a double $\SET/\SEQ$ decomposition.
There are a number of combinatorial classes that admit such a double $\SET/\SEQ$ decomposition in a natural way (examples include graphs, abelian square-tiled surfaces, combinatorial maps, and constellations; see~\cite{MonteilNurligareevSET}), but some classes seem not to have one.
For example, we can prove that the asymptotic expansion of the probability $p_n$ that a labeled triangulated surface \cite{BrooksMakover2004, PippengerSchleich2005} is connected satisfies
  \begin{align*}
    p_n = 1 & -
    \binom{2n}{2}\dfrac{15}{(6n-1)(6n-3)(6n-5)} -
    \binom{2n}{4}\dfrac{9\,045}{(6n-1)\ldots(6n-11)} \\ & -
    \binom{2n}{6}\dfrac{30\,085\,425}{(6n-1)\ldots(6n-17)} - \ldots -
    \d_{2k}\binom{2n}{2k}\dfrac{\big(6(n-k)-1\big)!!}{(6n-1)!!} - \ldots
  \end{align*}
However, we are not able to interpret the integer sequence
  \[
    (\d_n) = 15,\, 9\,045,\, 30\,085\,425,\, 282\,543\,711\,975, \ldots
  \]
as a counting sequence of any labeled combinatorial class.
Moreover, this sequence does not appear in the OEIS (\cite{OEIS2026}).

The goal of this paper is to provide such a combinatorial interpretation, with the additional cost that the sequence $(\d_n)$ is not a counting sequence of a combinatorial class, but the difference of two such counting sequences.
To this end, given the decomposition of a combinatorial class $\mcA$ as the $\SET$ of its connected components, we construct an ``anti-$\SEQ$'' operator that produces a virtual class $\mcD$ satisfying $\mcA=\SEQ(\mcD)$ and thus provides the desired combinatorial interpretation.
We call the class $\mcD$ virtual here because, in certain cases, some of the coefficients of its ``counting sequence'' are negative, and so it cannot be interpreted as a counting sequence of any true combinatorial class. 

At this point, we switch from the symbolic method to species theory, whose language is well-adapted for such interpretations.
Within this framework, one replaces combinatorial constructions with substitutions.
In particular, the constructions $\SET$ and $\SEQ$ are to be replaced with compositions with species $\mcE$ of sets and $\mcL$ of linear orders, respectively.
What allows us to reach our goals is the concept of virtual species.
In a few words, virtual species is a way to establish a combinatorial form of subtraction, which is similar to extending natural numbers to integers:
combinatorial objects are allowed to be taken both positive and negative.
This concept is also helpful for building inverses for multiplications and substitutions. 
In particular, the relation $\mcA=\mcL\circ\mcD$ can be inverted as $\mcD = \mcL_+^{(-1)}\circ\mcA_+$, so that we obtain an ``anti-$\SEQ$'' operator as the inverse $\mcL_+^{(-1)}$.
To jump ahead a bit, the desired result can then be stated in the following form.

{
\renewcommand{\thetheorem}{\ref{prop: d_(k,1) interpretation}}

\begin{proposition}
 Let $\mcA$ be a gargantuan species of structures that can be represented as $\mcA=\mcE\circ\mcB$ for some species of structures~$\mcB$.
 Assume that $n$ is a positive integer and $s\in\mcA$ is a random $\mcA$-structure on $[n]$.
 In this case,
 \begin{equation*}
  \bbP(s\mbox{ is connected})
   \approx 
  1 - \sum\limits_{k\ge1}\d_k\cdot\binom{n}{k}\cdot\dfrac{\a_{n-k}}{\a_n},
 \end{equation*}
 where $\a_n$ is the counting sequence of $\mcA$,
 \begin{equation*}
  \d_{k}
   = 
  \#\{s'\in\mcA_k \mid \pi_0(s') \mbox{ is odd}\}
   -
  \#\{s'\in\mcA_k \mid \pi_0(s') \mbox{ is even}\}
\end{equation*}
 and $\pi_0(s')$ is the number of connected components of the structure $s'$.
\end{proposition}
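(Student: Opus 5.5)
The plan is to reduce the statement to Theorem~\ref{theorem:SET-asymptotics}, or more precisely to the analytic content of its proof. That proof uses the decomposition $\mcA=\SEQ(\mcD)$ only through the generating-function identity $A(z)=1/(1-D(z))$, i.e. $D(z)=1-1/A(z)$. The gargantuan hypothesis on $(\a_n)$ then gives the expansion of $\b_n/\a_n$, where $B=\log A$, with coefficients $\d_k$ read off from $1-1/A$. Positivity of the $\d_k$ is never used.

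\textbf{Step 1: the analytic step works for virtual $\mcD$.} First I will check that the argument behind Theorem~\ref{theorem:SET-asymptotics} goes through verbatim when $\mcD$ is merely a formal power series $D(z)=1-1/A(z)$ with integer coefficients of arbitrary sign. In species language, $\mcD=\mcL_+^{(-1)}\circ\mcA_+$ is a virtual species whose exponential generating series is exactly $1-1/A(z)$. The asymptotic statement then holds with $\d_k=k![z^k]\bigl(1-1/A(z)\bigr)$.

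\textbf{Step 2: the combinatorial formula for $\d_k$.} Write $A=e^{B}$ with $B(0)=0$, since $\mcA=\mcE\circ\mcB$ and $\mcB$ consists of the connected structures. Then
\[
1-\frac{1}{A(z)} = 1-e^{-B(z)} = \sum_{j\ge1}\frac{(-1)^{j-1}B(z)^j}{j!}.
\]
The coefficient $k![z^k]\,B^j/j!$ counts $\mcA$-structures on $[k]$ having exactly $j$ connected components. Hence
\[
\d_k=\sum_{j}(-1)^{j-1}\#\{s'\in\mcA_k\mid \pi_0(s')=j\},
\]
which is precisely the odd-minus-even count in the statement. Equivalently, at the species level this is the identity $\mcL_+^{(-1)}\circ(\mcE\circ\mcB)_+ = \mcE_+^{(-1)}$-type alternating sets of $\mcB$-structures, giving a virtual species whose positive part is sets of an odd number of $\mcB$-structures and whose negative part is sets of an even number.

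\textbf{Main obstacle.} The substantive point is Step 1. I must make sure the proof of Theorem~\ref{theorem:SET-asymptotics} uses only the relation $A\cdot(1-D)=1$ together with the gargantuan estimates $\sum_{k}\binom nk \a_k\a_{n-k}$ being dominated by the boundary terms. Those estimates are applied to $\a$, not to $\d$, and the bound $|\d_k|\le \a_k$-type growth, for instance $|\d_k|\le \sum_j\#\{\pi_0=j\}=\a_k$, suffices to control remainders. If the earlier proof used positivity of $\d_k$ anywhere, I would replace $\d_k$ by $|\d_k|\le\a_k$ in the error estimates, which is enough under gargantuan growth.
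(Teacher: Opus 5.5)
Your proposal is correct and follows essentially the paper's route. The paper obtains this statement as the case $m=1$ of Theorem~\ref{theorem: species E asymptotics}, i.e.\ Bender's theorem (Theorem~\ref{theorem: Bender}) with $U=A-1$ and $F(x)=\log(1+x)$, so $W=1/A=e^{-B}$ carries no sign condition; this makes your Step~1 immediate without auditing the proof of Theorem~\ref{theorem:SET-asymptotics}, and your $|\d_k|\le\a_k$ fallback is unnecessary. The paper then reads off $k![z^k]e^{-B}$ as the alternating count over numbers of components, exactly as in your Step~2 (Lemma~\ref{lemma: E^(-1) = sum((-1)^(k)E_k)}).

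The only slip is your ``species-level identity'': $\mcL_+^{(-1)}\circ\mcA_+$ and $\sum_{j\ge1}(-1)^{j-1}\mcE_j\circ\mcB$ are merely equipotent, not isomorphic (see the remark after that lemma). This is harmless, since only counts enter the statement.
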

\addtocounter{theorem}{-1}
}

As a particular case, for the probability discussed above that a labeled triangulated surface is connected, we obtain the asymptotic expansion of the form
  \begin{equation*}
    \mathbb{P}\big(s\mbox{ is connected}\big)
     \approx 1 - 
    \sum\limits_{k\ge1}
     \d_{k} \cdot
     \binom{2n}{2k} \cdot
     \dfrac{\big(6(n-k)-1\big)!!}{(6n-1)!!}
  \end{equation*}
with
\[
 \d_{k}
  = 
 \#\{s'\in\mcPS_{2k}(3) \mid \pi_0(s') \mbox{ is even}\}
  -
 \#\{s'\in\mcPS_{2k}(3) \mid \pi_0(s') \mbox{ is odd}\},
\]
where $\mcPS_{2k}(3)$ is the labeled class of triangulated surfaces glued with $2k$ triangles and $\pi_0(s')$ is the number of connected components of the surface $s'$ (see Proposition~\ref{prop: ps(p) asymptotics}).

\

In fact, the approach we are developing can be applied to a wider range of cases where the structure enumeration is performed according to specific parameters.
For some parameters, such as the number of connected components, it suffices to pass to the restriction to subspecies: while species $\mcB$ can be interpreted as connected subspecies of species $\mcA=\mcE\circ\mcB$, species $\mcB^{\{m\}}=\mcE_m\circ\mcB$ represent structures consisting of exactly $m$ connected components.
Other parameters can be encoded with certain weights, which leads to the concept of weighted species: now each structure is counted not once, but based on its weight relative to the parameter in question.
All the above said is represented by the following general result.

{
\renewcommand{\thetheorem}{\ref{theorem: species E asymptotics}}
\begin{theorem}
 Let $\mcA$ be a gargantuan (weighted) species of structures that can be represented as $\mcA=\mcE\circ\mcB$ for some (weighted) species of structures~$\mcB$.
 Assume that $m,n\in\bbZ_{\geqslant0}$, and $s\in\mcA$ is a random $\mcA$-structure on $[n]$.
 In this case,
 \[
  \bbP(s\in\mcB^{\{m\}})
   \approx 
  \sum\limits_{k\ge0}\d_{k,m}\cdot\binom{n}{k}\cdot\dfrac{\a_{n-k}}{\a_n},
 \]
 where $\a_n$ and $\d_{n,m}$ are the total weights on the set $[n]$ of the species of structures $\mcA$ and $\mcD(m) = \mcB^{\{m-1\}}(\mcE^{-1}\circ\mcB)$, respectively.
\end{theorem}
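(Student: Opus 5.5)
We work at the level of exponential generating functions and treat the gargantuan hypothesis exactly as in~\cite{MonteilNurligareevSET}. The plan is to find a formal power series identity expressing $B^{\{m\}}(z)$ as a product $D_m(z)\,A(z)$, plus a remainder that is negligible in the gargantuan regime, and then read off coefficients.

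Write $A(z)=\sum_n \a_n z^n/n!$ and $B(z)$ for the (weighted) generating series, and let $u$ mark components. Then
\[
A(z,u)=\exp\bigl(uB(z)\bigr), \qquad B^{\{m\}}(z)=\frac{B(z)^m}{m!}.
\]
The species identity $\mcD(m)=\mcB^{\{m-1\}}\cdot(\mcE^{-1}\circ\mcB)$ gives
\[
D_m(z)=\frac{B(z)^{m-1}}{(m-1)!}\,e^{-B(z)}=\frac{B(z)^{m-1}}{(m-1)!}\,A(z)^{-1}.
\]
For $m=0$ we read $\mcB^{\{-1\}}=0$, so that $\d_{k,0}=[k=0]$ and the statement is trivial.

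The central identity is therefore
\[
B^{\{m\}}(z)=\frac{B^m}{m!}=D_{m+?}\cdots
\]
Rather than push that form through, I would adopt the approach of the earlier paper: use the fact that for gargantuan $A$, the composition $H(A(z))$ with $H$ analytic satisfies
\[
[z^n]\,H(A(z)) \approx \sum_k [z^k]\,H'(A(z))\cdot \a_{n-k}\binom{n}{k}\frac{1}{?}.
\]
More concretely, I would invoke Bender's theorem for gargantuan sequences (the key lemma used to prove Theorem~\ref{theorem:SET-asymptotics}). It states that if $F(x,y)$ is analytic near $(0,0)$ and $A$ is gargantuan, then
\[
[z^n]\,F\bigl(z,A(z)\bigr)\approx \sum_k [z^k]\,\partial_y F\bigl(z,A(z)\bigr)\,[z^{n-k}]A(z),
\]
asymptotically to all orders. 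Since $B=\log A$, we take
\[
F(z,y)=\frac{(\log y)^m}{m!},\qquad \partial_y F=\frac{(\log y)^{m-1}}{(m-1)!\,y}.
\]
Evaluating at $y=A(z)$ gives exactly $D_m(z)$. Analyticity holds near $y=a_0=1$, which is what we use: we assume $\a_0=1$, i.e.\ $\mcB$ has no structure on the empty set. Translating back to counts, with $[z^k]D_m=\d_{k,m}/k!$ and $[z^{n-k}]A=\a_{n-k}/(n-k)!$, and dividing by $\a_n/n!$, yields
\[
\bbP\bigl(s\in\mcB^{\{m\}}\bigr)\approx\sum_k \d_{k,m}\binom{n}{k}\frac{\a_{n-k}}{\a_n}.
\]

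For weighted species, the generating series are taken with weights. The same analytic argument applies provided the weighted totals $\a_n$ are gargantuan, which is exactly the hypothesis.

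The main obstacle is a precise, citable form of the Bender-type transfer with error $O\bigl(\a_{n-r}\binom{n}{r}/\a_n\bigr)$ for every $r$. If the earlier papers state it only for $F(z,y)=\log y$, I would reprove it from the gargantuan conditions by bounding convolution tails, $\sum_{r\le k\le n-r}\binom nk \a_k\a_{n-k}=O(\a_{n-r}\binom nr)$, after expanding $F$ as a power series in $y-1$. A secondary point is checking that the species-level definition of $\mcD(m)$ matches $D_m(z)$, using $\mcE^{-1}\circ\mcB$ having series $e^{-B}$.
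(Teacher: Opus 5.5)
This is essentially the paper's proof: you apply Bender's theorem to $U(z)=A(z)-1$, the series of $\mcA_+$, with $F(x)=\log^m(1+x)/m!$, and identify $F'(U(z))=\frac{B(z)^{m-1}}{(m-1)!}e^{-B(z)}$ with the series of $\mcD(m)$; this recentring is what makes your $(\log y)^m/m!$ legitimate, since it is not analytic at $y=0$, and the obstacle you flag does not arise because Theorem~\ref{theorem: Bender} already covers arbitrary analytic $F$. One slip is the side case $m=0$: there $\mcD(0)=\Zero$, so $\d_{k,0}=0$ for all $k$ (matching $\bbP(s\in\mcB^{\{0\}})=0$ for $n\ge1$), not $\d_{k,0}=[k=0]$, which would wrongly give probability $\approx 1$.
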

\addtocounter{theorem}{-1}
}

Due to Lemma~\ref{lemma: magical species identity}, we can claim that the ``anti-$\SEQ$'' operator satisfies
 \[
  \mcL_+^{(-1)} \equiv \One - \mcE^{-1}\circ\mcE_+^{(-1)}.
 \]
Therefore, in the case where $m=1$, Theorem~\ref{theorem: species E asymptotics} indeed provides an asymptotic generalization of Theorem~\ref{theorem:SET-asymptotics} in terms of the ``anti-$\SEQ$'' operator.

\

One of the new applications that cannot be treated using previously known methods is the Erd\H{o}s--Rényi model $G(n,p)$ of graphs~\cite{ErdosRenyi1959,Gilbert1959}.
According to this model, a graph with $n$ vertices is constructed randomly: each possible edge is drawn independently with probability $p\in(0,1)$.
Gilbert~\cite{Gilbert1959} showed that the asymptotic probability to obtain a~connected graph within $G(n,p)$ satisfies
 \[
  \bbP(G\mbox{ is connected}) =
  1 - nq^{n-1} +
   O\big(n^2q^{3n/2}\big),
 \]
where $q=1-p$.
Introducing the weight of a graph $g$ as $w(g)=\rho^{\ell}$ where $\ell$ is the number of edges in $g$ and $\rho=p/q$,
with the help of Theorem~\ref{theorem: species E asymptotics} we establish the following more general result.

{
\renewcommand{\thetheorem}{\ref{theorem: Erdos-Renyi asymptotics}}
\begin{theorem}
  Let $m$ be a fixed positive integer, and $p\in(0,1)$.
 The asymptotic probability that a random graph~$g$ in the Erd\H{o}s--Rényi model~$G(n,p)$ has $m$ connected components satisfies 
 \[
  \bbP(g\mbox{ has }m\mbox{ connected components})
   \approx
  \sum\limits_{k\ge0} P_{k,m}(\rho) \cdot \binom{n}{k} \cdot \dfrac{q^{nk}}{q^{k(k+1)/2}},
 \]
 where
 \[
  P_{k,m}(\rho) = \sum\limits_{g'\in\mcG_k}(-1)^{\pi_0(g')-(m-1)}\binom{\pi_0(g')}{m-1}w(g')
 \]
 and $\pi_0(g')$ is the number of connected components of the graph $g'$.
\end{theorem}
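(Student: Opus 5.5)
The plan is to apply Theorem~\ref{theorem: species E asymptotics} to the weighted species of graphs $\mcA=\mcG$, where a graph $g$ with $\ell$ edges has weight $w(g)=\rho^{\ell}$ and $\rho=p/q$. Connected components give $\mcG=\mcE\circ\mcCG$, with $\mcCG$ carrying the induced weights; weights are multiplicative over components, so this is an identity of weighted species.

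\textbf{Step 1: total weights and the probability.} The total weight on $[n]$ is
\[
\a_n=\sum_{g\in\mcG_n}\rho^{\ell(g)}=(1+\rho)^{\binom n2}=q^{-\binom n2}.
\]
In $G(n,p)$ a graph with $\ell$ edges has probability $p^{\ell}q^{\binom n2-\ell}=w(g)/\a_n$. Hence the weighted random structure of the theorem is exactly $G(n,p)$, and $\bbP(s\in\mcCG^{\{m\}})$ is the probability of having $m$ components. A direct computation gives
\[
\frac{\a_{n-k}}{\a_n}=q^{\binom n2-\binom{n-k}2}=q^{nk-k(k+1)/2},
\]
which matches $q^{nk}/q^{k(k+1)/2}$.

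\textbf{Step 2: gargantuanity.} We must check that $\a_n=q^{-\binom n2}$ is gargantuan in the sense used in \cite{MonteilNurligareevSET}. Concretely, we need $\sum_{k=r}^{n-r}\binom nk\a_k\a_{n-k}=O(\a_{n-r})$ and $\a_{n-1}/\a_n\to0$. The second holds because $\a_{n-1}/\a_n=q^{n-1}\to0$. For the first, the ratio $\binom nk\a_k\a_{n-k}/\a_n=\binom nk q^{k(n-k)}$ is log-concave-like in $k$ and dominated by its endpoints, so the sum is $O(\binom nr q^{r(n-r)})=O(\a_{n-r}/\a_n)$. This is the same estimate as for $2^{\binom n2}$ with base $1/q>1$, so I would cite or repeat that argument.

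\textbf{Step 3: identifying $\d_{k,m}$.} This is the main combinatorial step. By the theorem, $\d_{k,m}$ is the total weight on $[k]$ of $\mcD(m)=\mcCG^{\{m-1\}}\cdot(\mcE^{-1}\circ\mcCG)$. Here $\mcCG^{\{m-1\}}=\mcE_{m-1}\circ\mcCG$, and $\mcE^{-1}=\sum_j(-1)^j\mcE_j$ is the virtual inverse of $\mcE$ under multiplication.

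A structure of the product on $[k]$ is a graph on $[k]$ whose set of components is split into two parts: one part of $m-1$ components, and the other part of $j$ components, which carries sign $(-1)^j$. Weights multiply, so the weight is $w(g')$.

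Grouping by the underlying graph $g'$ with $\pi_0(g')=c$ components, there are $\binom{c}{m-1}$ ways to choose the first part, and the sign is $(-1)^{c-(m-1)}$. This gives exactly $P_{k,m}(\rho)$.

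\textbf{Expected difficulty.} Step 3 needs care only in verifying $\mcE^{-1}\circ\mcCG=\sum_j(-1)^j\mcE_j\circ\mcCG$ for weighted virtual species, which follows from the definition of composition. The genuine obstacle is the uniform bound in Step 2 for general $q\in(0,1)$, especially near the middle range of $k$ when $q$ is close to $1$. There I would bound $\binom nk\le 2^n$ against $q^{k(n-k)}\le q^{kn/2}$ for $r\le k\le n/2$, which is summable once $n$ is large.
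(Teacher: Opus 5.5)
Your plan is the paper's proof. You apply Theorem~\ref{theorem: species E asymptotics} to $\mcG=\mcE\circ\mcCG$ with $w(g)=\rho^{|E(g)|}$, check that $\mcG$ is gargantuan, use $\a_{n-k}/\a_n=q^{nk-k(k+1)/2}$, and evaluate the total weight of $\mcCG^{\{m-1\}}\cdot(\mcE^{-1}\circ\mcCG)$ by choosing $m-1$ components of a graph $g'$ and signing the rest. Step~1 and your final count are correct. However, two intermediate claims are false as written; both are easy to repair, and neither breaks the argument.

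\emph{Gargantuan condition.} The definition is applied to $a_n=\a_n/n!$, so the conditions are:
\begin{itemize}
\item $n\a_{n-1}/\a_n=nq^{n-1}\to0$;
\item $\sum_{k=r}^{n-r}\binom nk\a_k\a_{n-k}=O\big((n)_r\,\a_{n-r}\big)$.
\end{itemize}
Your version, with $O(\a_{n-r})$ on the right, fails for every $r\ge1$: the single term $k=r$ is already $\binom nr\a_r\a_{n-r}\asymp n^r\a_{n-r}$. For the same reason, your equality $O\big(\binom nr q^{r(n-r)}\big)=O(\a_{n-r}/\a_n)$ is off by a factor of order $n^r$.

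The estimate you actually aim for is $\sum_{k=r}^{n-r}\binom nk q^{k(n-k)}=O\big(\binom nr q^{r(n-r)}\big)$. This is exactly the correct condition after dividing by $\a_n$, so only the bookkeeping needs fixing.

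To prove it, note that $t_k=\binom nk q^{k(n-k)}$ is U-shaped, not log-concave. Indeed, $t_{k+1}/t_k=\frac{n-k}{k+1}q^{n-2k-1}\le1$ for all $k<n/2$ once $n$ is large. Up to the factor $n!/\a_n$, this is the monotonicity of $a_ka_{n-k}$ that the paper feeds into Lemma~\ref{lemma: sufficient conditions for gargantuan sequence} to obtain Lemma~\ref{lemma: Erdos-Renyi is gargantuan}, and it covers $q$ close to $1$ without any case split.

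By contrast, the bounds $\binom nk\le2^n$ and $q^{k(n-k)}\le q^{kn/2}$ cannot be used on the whole range $r\le k\le n/2$. At $k=r$ they give $2^nq^{rn/2}$, which is not $O(n^rq^{rn})$. They only work for $k\ge K$ with $2q^{K/2}<q^r$, and the finitely many remaining $k$ must be handled term by term.

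\emph{The inverse $\mcE^{-1}$.} It is not true that $\mcE^{-1}=\sum_j(-1)^j\mcE_j$ as virtual species. The multiplicative inverse is $\sum_k(-1)^k(\mcE_+)^k$, whose part of cardinality $2$ is $\mcE_1^2-\mcE_2$, whereas the alternating sum gives $\mcE_2$ there (Remark~\ref{remark: E^(-1) = sum((-1)^(k)E_k)}). So $\mcE^{-1}\circ\mcCG=\sum_j(-1)^j\mcE_j\circ\mcCG$ does not follow from the definition of composition; as an isomorphism of species it is false.

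What does hold, and is all you need, is equipotence (Lemma~\ref{lemma: E^(-1) = sum((-1)^(k)E_k)}): both species have exponential generating series $e^{-z}$. Exponential generating series respect products and compositions, so $\mcCG^{\{m-1\}}\cdot(\mcE^{-1}\circ\mcCG)$ and $\mcCG^{\{m-1\}}\cdot(\tilde{\mcE}\circ\mcCG)$ have the same total weights $\d_{k,m}$. Your component-choosing count is a correct evaluation for the second species and yields $P_{k,m}(\rho)$, exactly as in the paper.
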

\addtocounter{theorem}{-1}
}

According to our previous work~\cite{MonteilNurligareev2021,MonteilNurligareevSET}, in the particular case $m=1$ and $p=q=1/2$ the coefficients $-P_{k,1}(1)$ can be interpreted as the counting sequence of irreducible tournaments.
We show that this interpretation can be extended to the general case $p\in(0,1)$, so that polynomials $-P_{k,1}(\rho)$ count irreducible tournaments with ties (Theorem~\ref{theorem: Erdos-Renyi asymptotics in terms of tournaments}).
Thus, each polynomial receives two different interpretations.
One of them is in terms of weights of graphs, for example,
\begin{align*}
 -P_{3,1}(\rho)
  &=
 \rho^3 + 3\rho^2 - 3\rho + 1 \\
  &=
 w \negthickspace \left(\begin{tikzpicture}
  \coordinate (a1) at (0,10pt);
  \coordinate (a2) at ([rotate = 120] a1);
  \coordinate (a3) at ([rotate = 240] a1);
  \draw (a1) -- (a2) -- (a3) -- (a1);
  \foreach \p in {a1,a2,a3} \filldraw [black] (\p) circle (1.5pt);
 \end{tikzpicture}\right)
  +
 3 \thinspace w \negthickspace \left(\begin{tikzpicture}
  \coordinate (a1) at (0,10pt);
  \coordinate (a2) at ([rotate = 120] a1);
  \coordinate (a3) at ([rotate = 240] a1);
  \draw (a2) -- (a1) -- (a3);
  \foreach \p in {a1,a2,a3} \filldraw [black] (\p) circle (1.5pt);
 \end{tikzpicture}\right)
  -
 3 \thinspace w \negthickspace \left(\begin{tikzpicture}
  \coordinate (a1) at (0,10pt);
  \coordinate (a2) at ([rotate = 120] a1);
  \coordinate (a3) at ([rotate = 240] a1);
  \draw (a2) -- (a3);
  \foreach \p in {a1,a2,a3} \filldraw [black] (\p) circle (1.5pt);
 \end{tikzpicture}\right)
  +
 w \negthickspace \left(\begin{tikzpicture}
  \coordinate (a1) at (0,10pt);
  \coordinate (a2) at ([rotate = 120] a1);
  \coordinate (a3) at ([rotate = 240] a1);
  \foreach \p in {a1,a2,a3} \filldraw [black] (\p) circle (1.5pt);
 \end{tikzpicture}\right).
\end{align*}
 Another one is in terms of weights of irreducible tournaments with ties, such as
\begin{align*}
 -P_{3,1}(\rho)
  &=
 (\rho-1)^3 + 6(\rho-1)^2 + 6(\rho-1) + 2 \\
  &=
 w \negthickspace \left(\begin{tikzpicture}[>=latex]
  \coordinate (a1) at (0,10pt);
  \coordinate (a2) at ([rotate = 120] a1);
  \coordinate (a3) at ([rotate = 240] a1);
  \draw[<->] (a1) -- (a3);
  \draw[<->] (a2) -- (a1);
  \draw[<->] (a2) -- (a3);
  \foreach \p in {a1,a2,a3} \filldraw [black] (\p) circle (1.5pt);
 \end{tikzpicture}\right)
  +
 6 \thinspace w \negthickspace \left(\begin{tikzpicture}[>=latex]
  \coordinate (a1) at (0,10pt);
  \coordinate (a2) at ([rotate = 120] a1);
  \coordinate (a3) at ([rotate = 240] a1);
  \draw[<->] (a1) -- (a3);
  \draw[<->] (a2) -- (a1);
  \draw[->] (a2) -- (a3);
  \foreach \p in {a1,a2,a3} \filldraw [black] (\p) circle (1.5pt);
 \end{tikzpicture}\right)
  +
 6 \thinspace w \negthickspace \left(\begin{tikzpicture}[>=latex]
  \coordinate (a1) at (0,10pt);
  \coordinate (a2) at ([rotate = 120] a1);
  \coordinate (a3) at ([rotate = 240] a1);
  \draw[->] (a1) -- (a3);
  \draw[->] (a2) -- (a1);
  \draw[<->] (a2) -- (a3);
  \foreach \p in {a1,a2,a3} \filldraw [black] (\p) circle (1.5pt);
 \end{tikzpicture}\right)
  +
 2 \thinspace w \negthickspace \left(\begin{tikzpicture}[>=latex]
  \coordinate (a1) at (0,10pt);
  \coordinate (a2) at ([rotate = 120] a1);
  \coordinate (a3) at ([rotate = 240] a1);
  \draw[->] (a1) -- (a3);
  \draw[->] (a2) -- (a1);
  \draw[<-] (a2) -- (a3);
  \foreach \p in {a1,a2,a3} \filldraw [black] (\p) circle (1.5pt);
 \end{tikzpicture}\right)
\end{align*}
(here the weight of a tournament $t$ is $w(t)=(\rho-1)^{\ell}$, where $\ell$ is the number of ties in $t$).
From an algebraic point of view, these two interpretations correspond to the decomposition of polynomials $P_{k,1}(\rho)$ into two different bases.

Note that if $p<1/2$, then certain tournaments with ties are purely virtual, that is, their weights are negative.
Moreover, the total weight of irreducible tournaments on certain sets is also negative in this case.
For example, for $p=1/4$, the asymptotic probability that a random graph $g$ is connected satisfies
 \begin{multline*}
  \bbP(g\mbox{ is connected})
   = \\
  1
   -
  \binom{n}{1}\left(\dfrac{3}{4}\right)^{n-1}
   +
  \dfrac{2}{3}\binom{n}{2}\left(\dfrac{3}{4}\right)^{2n-3}
   -
  \dfrac{10}{27}\binom{n}{3}\left(\dfrac{3}{4}\right)^{3n-6}
   +
  \dfrac{8}{729}\binom{n}{4}\left(\dfrac{3}{4}\right)^{4n-10}
   +
  \ldots
 \end{multline*}
This fact certifies that our result cannot be obtained within the symbolic method and that the use of species theory is crucial.

\

Finally, Theorem~\ref{theorem: species E asymptotics} can be adapted for $\SEQ$ and $\CYC$ decompositions that, in the language of the species, are translated as compositions with the species $\mcL$ of linear orders and $\mcCP$ of cycles, respectively (Theorem~\ref{theorem: species L asymptotics}).
Thus, we obtain the asymptotic probability that a random tournament with ties is irreducible or consists of a fixed number of irreducible parts (Theorem~\ref{theorem: tournaments with ties asymptotics}).

\

The paper is organized as follows.
In Section~\ref{sec: tools}, we introduce useful notations and recall the necessary concepts from species theory, as well as some properties of gargantuan sequences and Bender's theorem.
In Section~\ref{sec: main result}, we establish our main result, Theorem~\ref{theorem: species E asymptotics} and discuss its combinatorial interpretation.
We also provide adaptations for the asymptotics related to species $\mcL$ and $\mcCP$ (Theorem~\ref{theorem: species L asymptotics}), as well as adaptations for the case where the counting sequences are $p$-periodic (Proposition~\ref{prop: p-periodic species asymptotics}).
The next part is devoted to applications.
More precisely, in Section~\ref{subsec: Erdos-Renyi} we explore the asymptotic probability of connected graphs and irreducible tournaments with ties within the Erd\H{o}s--Rényi model,
while Section~\ref{subsec: surface models} is devoted to the asymptotic probability of quadratic square-tiled surfaces (Proposition~\ref{prop: qss asymptotics}), $P$-angulated surfaces (Proposition~\ref{prop: ps(p) asymptotics}), and graph encoded manifolds (Proposition~\ref{prop: gem asymptotics}).
Finally, we conclude the paper by Section~\ref{sec: conclusion} with a number of open problems.

\section{Tools}
\label{sec: tools}
 
\subsection{Notation}
\label{subsec: notation}

\begin{notation}\label{notation: approx} 
 For a sequence $(a_n)$ and an integer $m$, we write
  \begin{equation}\label{formula: asymptotic expansion}
   a_n \approx \sum\limits_{k\ge m}f_k(n)
  \end{equation}
  if for every integer $r\ge m$,
  \[
   a_n = \sum\limits_{k=m}^{r}f_k(n) + O\big(f_{r+1}(n)\big),
  \]
  and for every integer $k\ge m$,
  \[
   f_{k+1}(n) = o\big(f_k(n)\big).
  \]
 The expression \eqref{formula: asymptotic expansion} is called an \emph{asymptotic expansion} of the sequence $(a_n)$.
\end{notation}

\subsection{Species of structures}
\label{subsec: species}

In this section, we recall the basic notions of the theory of combinatorial species introduced in 1981 by Joyal~\cite{Joyal1981}.
In order to avoid excessive abstraction, we do this in the spirit of the book written by Bergeron, Labelle, and Leroux~\cite{BergeronLabelleLeroux1998}.
Another introductory reference that the reader may find useful is~\cite[Chapter 4]{Nurligareev2022}.

\subsubsection{The concept of species of structures}

A \emph{(combinatorial) species of structures} is a rule $F$ such that
 \begin{enumerate}
  \item for each finite set $U$, the rule $F$ produces a finite set $F[U]$,
  \item for each bijection $\sigma\colon U\to V$, the rule $F$ produces a map
  \[
   F[\sigma]\colon F[U]\to F[V],
  \]
  satisfying the following properties:
  \begin{enumerate}
   \item for all bijections $\sigma\colon U\to V$ and $\tau\colon V\to W$,
    \[
     F[\tau\circ\sigma] = F[\tau]\circ F[\sigma];
    \]
   \item for the identity map $\Id_{U}\colon U\to U$,      \[
     F[\Id_U] = \Id_{F[U]}.
    \]
  \end{enumerate}
 \end{enumerate}
The elements of the set $F[U]$ are called $F$-\emph{structures on} $U$, or \emph{structures of} $F$ \emph{on} $U$,
while the map $F[\sigma]$ is called the \emph{transport function of} $F$-\emph{structures along} $\sigma$.

\

The transport of structures along bijections represents a functorial approach to combinatorics.
Within this approach, structures can be thought of as labeled by elements of the set $U$.
At the same time, in the case where the nature of these elements itself is not important, we can consider a structure $s\in F[U]$ and its image $F[\sigma](s)\in F[V]$ to be identical.
In particular, unless otherwise specified, we assume that the structures considered in this paper are labeled by the elements of the sets $[n]$,
where $[n]=\{1,\ldots,n\}$ for any positive integer $n$, and $[0]=\emptyset$.

\

\noindent
Given a species of structures $F$, a species $G$ is said to be a \emph{subspecies} of $F$ if
 \begin{enumerate}
  \item for each finite set $U$, we have $G[U]\subseteq F[U]$,
  \item for each bijection $\sigma\colon U\to V$, we have $G[\sigma]=F[\sigma]|_{G[U]}$.
 \end{enumerate}
In particular, for any species $F$, the following two subspecies are used quite frequently:
 \begin{itemize}
  \item the subspecies $F_+$ consisting of $F$-structures on nonempty sets,
  \item the subspecies $F_n$ consisting of $F$-structures on the set $[n]$, where $n\in\mathbb{Z}_{\geqslant0}$.
 \end{itemize}

\

For the purpose of enumeration, for a species of structure $F$, we use its \emph{exponential generating series}
 \[
  F(z) = \sum\limits_{n=0}^{\infty} \big|F[n]\big|\,\dfrac{z^n}{n!},
 \]
 where $F[n]$ is used as a short version of $F\big[[n]\big]$.

\

We also introduce the following notion of asymptotic probability.

\begin{notation}\label{notation: asymptotic probability} 
 Let $F$ be a species of structures.
 For a nonnegative integer $n$, in the case where the subspecies $F_n$ is nonempty, we endow it with the uniform probability $\mathbb{P}_n$: each object $s\in F[n]$ has probability $1/\big|F[n]\big|$.
 Now if $Q$ is some property, we denote by
 \[
  \mathbb{P}(s\mbox{ satisfies }Q)
 \]
 the sequence of probabilities
 \[
  \Big(\mathbb{P}_n\big\{s\in F[n]\mid s \mbox{ satisfies }Q\big\}\Big)_{|F[n]|>0}
 \]
 and call it the \emph{asymptotic probability that a random object $s\in F$ has the property $Q$}.
\end{notation}

\subsubsection{Particular examples of species}

The following species are of particular interest in the frame of this paper (for details, see~\cite{Joyal1981} and~\cite{BergeronLabelleLeroux1998}, as well as~\cite{Nurligareev2022}).
Here, we use the notation of~\cite{BergeronLabelleLeroux1998}, except for the species of permutations that we denote by $\mcP$ instead of $\mcS$.

\begin{itemize}
 \item
  The \emph{species of sets} $\mcE$, defined by
  \(
   \mcE[U] = \{U\}
  \)
  for every finite set $U$, and equipped with the trivial transport function.
  The corresponding exponential generating series is
  \[
   \mcE(z) = e^z.
  \]
 \item
  The species of \emph{linear orders} $\mcL$, defined by
  \[
   \mcL[U] = \{\phi_U\colon [n]\to U\mid\phi_U\mbox{ is bijective}\}
  \]
  for every finite set $U$ (for the appropriate integer $n$), and equipped with the transport function
  \begin{equation}\label{formula: transport for linear orders}
   \mcL[U]\to\mcL[V],
   \qquad
   \phi_U \mapsto \phi_V = \sigma\phi_U,
  \end{equation}
  along the bijection $\sigma\colon U\to V$ of finite sets $U$ and $V$.
  The exponential generating series of linear orders is
  \[
   \mcL(z) = \dfrac{1}{1-z}.
  \]
 \item
  The species of \emph{permutations} $\mcP$, defined by
  \[
   \mcP[U] = \{\psi_U\colon U\to U\mid\psi_U\mbox{ is bijective}\}
  \]
  for every finite set $U$, and equipped with the transport function
  \begin{equation}\label{formula: transport for permutations}
   \mcP[U]\to\mcP[V],
   \qquad
   \psi_U \mapsto \psi_V = \sigma\psi_U\sigma^{-1},
  \end{equation}
  along the bijection $\sigma\colon U\to V$ of finite sets $U$ and $V$.
  The exponential generating series $\mcP(z)$ coincides with that of linear orders:
  \[
   \mcP(z) = \dfrac{1}{1-z}.
  \]
 \item
  The species of \emph{cyclic permutations} $\mcCP$, defined as a subspecies of $\mcP$ that are cycles.
  Their exponential generating series is
  \[
   \mcCP(z) = \log\dfrac{1}{1-z}.
  \]
\end{itemize}

Note that $\mcZ = \mcE_1 = \mcL_1 = \mcP_1 = \mcCP_1$ is the species characteristic of singletons.
Other particular restrictions are $\One = \mcE_0 = \mcL_0 = \mcP_0$, which represents the species characteristic of the empty set, and $\Zero = \mcCP_0$, which is the empty species.
The corresponding exponential generating series are $\mcZ(z) = z$, as well as $\One(z) = 1$ and $\Zero(z)=0$, respectively.
More generally,
\[
 \mcE_n(z) = \dfrac{z^n}{n!}
 \qquad\mbox{and}\qquad
 \mcL_n(z) = \mcP_n(z) = z^n.
\]

\subsubsection{Equipotent and isomorphic species}

As we have seen, from the counting point of view the species of linear orders and permutations are similar:
the number of structures on $[n]$ is $n!$ in both cases, and their exponential generating series coincide, $\mcL(z)=\mcP(z)$.
For such type of species, it is said that they are \emph{equipotent}, and one writes $\mcL\equiv\mcP$.
In other terms, two species $F$ and~$G$ are \emph{equipotent} if their exponential generating series coincide.
In particular, this means that for each finite set $U$, there is a bijection $\alpha_U\colon F[U]\to G[U]$.

However, we cannot call species $\mcL$ and $\mcP$ equal, as the ways in which they behave are different.
Let us explain it in the following way.
Any two linear orders are \emph{isomorphic} in the sense that one can be obtained from another by a simple relabeling.
Formally, for any two linear orders $l_u\in \mcL[U]$ and $l_v\in \mcL[V]$ of the same size, there exists a bijection $\sigma\colon U\to V$ such that $l_v = \sigma \cdot l_u$, this is a consequence of~\eqref{formula: transport for linear orders}.
On the other hand, as follows from relation~\eqref{formula: transport for permutations}, two permutations are isomorphic if and only if they are of the same cycle type.
Thus, there exist permutations of the same size that are not isomorphic.

In the general case, we say that two species of structures are \emph{isomorphic} or \emph{equal}
if, for any finite set $U$, there is a bijection $\alpha_U\colon F[U] \to G[U]$ such that, for any $F$-structure $s\in F[U]$ and any bijection $\sigma\colon U\to V$, we have
\[
 \sigma\cdot\alpha_U(s) = \alpha_V(\sigma\cdot s).
\]
That is, the following diagram is commutative:
\[
 \begin{CD}
  F[U] @>\alpha_U>> G[U] \\
  @VF[\sigma]VV
  @VVG[\sigma]V \\
  F[V] @>\alpha_V>> G[V]
 \end{CD}
\]

\subsubsection{Operations on species}

Here, we briefly describe several operations that are useful for creating new species based on existing ones.
We omit the definition of transport functions because it naturally follows from that of sets of structures.
The reader can find all the necessary details in~\cite{BergeronLabelleLeroux1998}.

\

Let $F$ and $G$ be two species of structures.
\begin{itemize}
 \item
  An $(F+G)$-structure is either an $F$-structure or a $G$-structure.
  In other words, for each finite set $U$, the \emph{sum} $(F+G)$ produces a disjoint union
  \[
   (F+G)[U] = F[U] + G[U].
  \]
  Their exponential generating series satisfy
  \[
   (F+G)(z) = F(z) + G(z).
  \]
 \item
  An $(F \cdot G)$-structure is an ordered pair $(s,t)$ such that $s$ is an $F$-structure and $t$~is a $G$-structure.
  More precisely, for each finite set $U$, the \emph{product} $F\cdot G$ produces a~disjoint union of Cartesian products
  \[
   (F\cdot G)[U] = \sum_{\substack{U_F\cup U_G=U\\ U_F\cap U_G=\emptyset}} F[U_F] \times G[U_G].
  \]
  The corresponding exponential generating series satisfies
  \[
   (F \cdot G)(z) = F(z) \cdot G(z).
  \]
 \item
  Assuming that $G_0=\emptyset$,
  we define an $(F\circ G)$-structure as an $F$-assembly of disjoint $G$-structures.
  Formally, for each finite set $U$, the \emph{composition} $F\circ G$ produces a~disjoint union of Cartesian products 
  \[
   (F\circ G)[U] = \sum\limits_{\pi\in\Par[U]} F[\pi] \times \prod\limits_{p\in\pi}G[p],
  \]
  where $\Par[U]$ is the set of partitions of $U$.
  Here, we have the relation
  \[
   (F \circ G)(z) = F\big(G(z)\big).
  \]
 \item
  Finally, for each finite set $U$, the \emph{derivative} $F'$ produces the set
  \[
   F'[U] = F[U^{+}],
  \]
  where $U^{+} = U\cup\{*_U\}$ and $*_U\notin U$.
  In other words, an $F'$-structure is essentially an $F$-structure, but on a larger set enriched by an additional element.
  The exponential generating series of the corresponding species satisfies
  \[
   F'(z) = \dfrac{d}{dz}F(z).
  \]
\end{itemize}

The operations mentioned here possess natural properties that the reader would expect, such as $(F+G)\cdot H = F\cdot H + G\cdot H$ (distributive law), $(F\cdot G)' = F'\cdot G + F\cdot G'$ (Leibniz rule), $F\circ\mcZ = \mcZ\circ F = F$, etc.
The particular species discussed above are also related to each other by means of some of these operations.
Thus, we have
\[
 \mcP = \mcE\circ\mcCP
\]
and
\[
 \mcL_n=\mcZ^n.
\]
We will also use the following relations concerning the derivatives of the species of sets, linear orders, and cyclic permutations, respectively:
\[
 \mcE'=\mcE,
 \qquad
 \mcL'=\mcL^2,
 \qquad
 \mcCP'=\mcL,
\]
as well as the derivatives of their restrictions to a given cardinality:
\[
 (\mcE_n)'=\mcE_{n-1},
 \qquad
 (\mcL_n)'=n\mcL_{n-1},
 \qquad
 (\mcCP_n)'=\mcL_{n-1}.
\]

\subsubsection{Virtual species}

The class of species of structures forms a semi-ring with respect to addition ($+$) and multiplication ($\cdot$), where zero and one are the species $\Zero$ and $\One$.
That is why it can be extended in a similar manner as the semi-ring $\mathbb{Z}_{\geqslant0}$ of natural numbers is extended to build the ring $\mathbb{Z}$ of integers.
The result of this extension is known as the class of \emph{virtual species}, and is shown to be compatible with other operations, such as composition ($\circ$) and derivation ($'$).

In addition to the subtraction operation, virtual species have inverses for multiplication and substitution.
\begin{itemize}
 \item
  If a (virtual) species $F$ satisfies $F_0=\One$, then, in the class of virtual species, it admits the unique \emph{multiplicative inverse}:
  \[
   F^{-1} = \sum\limits_{k=0}^{\infty}(-1)^k(F_+)^k.
  \]
  In the particular case where $F=\mcL$, this relation can be simplified:
  \[
   \mcL^{-1}=\One-\mcZ.
  \]
  As expected, the exponential generating series of the multiplicative inverse satisfies $F^{-1}(z)=1/F(z)$.
 \item 
  If a (virtual) species $F$ satisfies $F_0=\Zero$ and $F_1=\mcZ$, then, in the class of virtual species, it admits unique \emph{inverse for substitution}:
  \[
   F^{(-1)} = \sum\limits_{k=0}^{\infty}(-1)^k\Delta_{F}^k(\mcZ),
  \]
 where the linear operator $\Delta_F$ is defined by the relation
 \(
  \, \Delta_F(\Phi) = \Phi\circ F - \Phi.
 \)
 Its exponential generating series satisfies
 \(
  \, F\big(F^{(-1)}(z)\big) = F^{(-1)}\big(F(z)\big) = z.
 \)
\end{itemize}
Again, these inverses behave exactly as the reader would naturally expect.
For instance, taking a~derivative of the identity
 \[
  \Psi^{(-1)} \circ \Psi = \mcZ
 \]
 leads us to the expression for the derivative of the inverse under substitution:
 \[
  (\Psi^{(-1)})' = (\Psi')^{-1}\circ\Psi^{(-1)}.
 \]
 For more detailed information on virtual species, we invite the interested reader to refer to~\cite[Section 2.5]{BergeronLabelleLeroux1998}.

\subsubsection{Weighted species}

It is quite common in enumerative combinatorics to count certain objects according to specific parameters.
To this end, marking variables are often used, and weights are assigned to the objects under consideration.
Applying this approach to species theory, we arrive at the concept of weighted species.

Formally, our set of all possible weights is a ring of formal power series $\mathbb{A}$ (in an arbitrary number of variables) with coefficients in an integral domain ($\mathbb{Z}$ or $\mathbb{R}$ in our case).
We extend the definition of species discussed above to \emph{$\mathbb{A}$-weighted species} as follows.
The set $F[U]$, which is produced for a finite set $U$, is now $\mathbb{A}$-weighted, finite or summable.
The latter means that $F[U]$ is equipped with a weight function $w\colon F[U]\to\mathbb{A}$ preserved by the transport function, and that there exists the sum
\[
 \big|F[U]\big|_w = \sum\limits_{s\in F[U]}w(s).
\]
called the \emph{inventory} of the set of $F$-structures on $U$.
The value $\big|F[n]\big|_w$ will be referred to as the \emph{total weight} of $F$-structures on $[n]$.

All of the above concepts can be naturally extended to the weighted case.
Thus, the exponential generating series of an $\mathbb{A}$-weighted species is now
\[
 F(z) = \sum\limits_{n=0}^{\infty}\big|F[n]\big|_w\dfrac{z^n}{n!}.
\]
The weight of a product of two species is defined as the product of their weights,
the weight of a composition is the product of the weights of its components, etc.
The concept of virtual species matches the weights as well.
As usual, for all details, we refer the reader to the book~\cite{BergeronLabelleLeroux1998}.

Note that, in the case where all weights are positive, it is natural to endow $F[n]$ with a discrete probability:
a structure $s\in F[n]$ appears with probability
 \[
  \mathbb{P}(s) = \dfrac{w(s)}{\big|F[n]\big|_w}.
 \]

\subsection{Gargantuan sequences and Bender's theorem}
\label{sec: gargantuan}

In this section, we introduce the concepts of gargantuan sequence and gargantuan species, which first appeared in~\cite{MonteilNurligareevSET} and~\cite[Chapter~2]{Nurligareev2022}.
We also recall a simplified version of Bender's theorem~\cite{Bender1975} (see also~\cite{Odlyzko1995}),
which serves as the main asymptotic tool in this work.

\begin{definition}\label{def: gargantuan species}
 A sequence $(a_n)$ is \emph{gargantuan} if the following two conditions hold:
 \[
 \mbox{ (i) }\quad
 \dfrac{a_{n-1}}{a_n}\to0,
 \quad\mbox{as }n\to\infty;
 \qquad\quad
 \mbox{ (ii) }\quad
 \sum\limits_{k=r}^{n-r}|a_ka_{n-k}| = O\big(a_{n-r}\big)\,\,\,\mbox{ for each }\, r\in\bbZ_{\geqslant0}.
 \]
 Furthermore, let $F$ be a (virtual, weighted) species of structures.
 We call $F$ \emph{gargantuan} if the sequence $a_n = \big(\big|F[n]\big|/n!\big)$ is gargantuan.
 In particular, we assume that $a_n\neq0$ for almost all $n\in\bbZ_{\geqslant0}$.
\end{definition}

\begin{theorem}\label{theorem: Bender}
 Consider a formal power series
 \[
  U(z) = \sum\limits_{n=1}^{\infty}u_nz^n
 \]
 and a function $F(x)$ analytic in a neighborhood of origin.
 Define
 \[
  V(z) = \sum\limits_{n=0}^{\infty}v_nz^n = F\big(U(z)\big)
 \qquad
 \mbox{and}
 \qquad
  W(z) = \sum\limits_{n=0}^{\infty}w_n z^n =
  \left.\frac{\partial}{\partial x} F(x)\right|_{x=U(z)}.
 \]
 Assume that $u_n\ne 0$ for all sufficiently large $n$, and that the sequence $(u_n)$ is gargantuan.
 In this case,
 \[
  v_n \approx \sum\limits_{k\ge0}w_ku_{n-k}
 \]
 and the sequence $(v_n)$ is gargantuan.
\end{theorem}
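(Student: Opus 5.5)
The statement is a simplified form of Bender's theorem~\cite{Bender1975}, and I would prove it by splitting $U$ at a fixed order $r$ and Taylor-expanding $F$ around the polynomial part. Fix $r\ge 1$ and write $U = P + T$, where $P(z)=\sum_{n<r}u_nz^n$ is a polynomial and $T(z)=\sum_{n\ge r}u_nz^n$. Since $F$ is analytic near $0$ and $P(0)=0$, the series $F^{(m)}(P(z))$ are analytic near $z=0$. In the ring of formal power series we have
\[
 V = F(P+T) = \sum_{m\ge0}\frac{F^{(m)}(P)}{m!}\,T^m ,
\]
and this is legitimate because $T^m=O(z^{rm})$, so each coefficient is a finite sum. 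It then suffices to show that
\[
 v_n = \sum_{k=0}^{r-1}w_ku_{n-k} + O(u_{n-r}).
\]
The relation $u_{n-k-1}=o(u_{n-k})$ needed for an asymptotic expansion in the sense of Notation~\ref{notation: approx} comes directly from condition (i).

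\emph{Step 1: the term $m=0$.} The series $F(P(z))$ has a positive radius of convergence, so its coefficients are $O(R^{-n})$ for some $R>0$. Condition (i) forces $|u_n|^{1/n}\to\infty$, so $R^{-n}=o(u_{n-r})$.

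\emph{Step 2: the term $m=1$.} Write $F'(P(z))=\sum_k c_kz^k$. Since $P\equiv U \pmod{z^r}$, we have $c_k=w_k$ for $k<r$. The coefficient of $z^n$ in $F'(P)T$ is $\sum_{k\le n-r}c_ku_{n-k}$. The terms $k<r$ give the main sum. For the terms $r\le k\le n-r$, I would use $|c_k|\le CR^{-k}$ together with the ratio estimate from (i): for large $n$ one has $|u_{n-k}|\le \varepsilon^{k-r}|u_{n-r}|$ with $\varepsilon R^{-1}<1$, at least on the range where $n-k$ is large. The short remaining range where $n-k$ is bounded is handled by the superexponential growth of $u_{n-r}$. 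Together these terms are $O(u_{n-r})$.

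\emph{Step 3: the terms $m\ge2$.} This is the main obstacle, because the bound must be uniform in $m$ so that it can be summed against $F^{(m)}(P)/m!$. I would first derive a lemma from condition (ii) by induction on $m$:
\[
 [z^n]\,|T|^m \le K^{m-1}\,|u_{n-r(m-1)}| \quad\text{for all } n ,
\]
where $|T|$ denotes the series with coefficients $|u_n|$. Increasing $r$ in the splitting point (which only moves the cutoff) and using (i), one can even replace the right-hand side by $(K\delta)^{m-1}|u_{n-r}|$ with $\delta$ as small as we like. This makes the convolution with the coefficients of $F^{(m)}(P)/m!$, which are bounded by $C\,m!\,\rho^{-m}\cdot R^{-k}$ by Cauchy estimates divided by $m!$, geometrically convergent in $m$. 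The total contribution of $m\ge2$ is then $O(u_{n-r})$, with the same treatment of the convolution index $k$ as in Step 2. The delicate point is arranging the constants so that the induction closes. If the direct induction fails, I would instead use the trick of majorizing $T$ by $z^r$ times a series whose coefficients satisfy (ii) with constant~$1$ after rescaling $z$.

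\emph{Step 4: $(v_n)$ is gargantuan.} From the expansion, $v_n\sim w_0u_n$ when $w_0=F'(0)\ne0$, and (i) and (ii) for $(v_n)$ follow from those of $(u_n)$. When $w_0=0$, I would use the first $k$ with $w_k\ne0$, which gives $v_n\sim w_ku_{n-k}$; since the statement treats $(v_n)$ as a gargantuan sequence, I would check that the argument goes through under this shift. Condition (ii) for $v$ follows by bounding $|v_k|\le C|u_{k-k_0}|$ for large $k$ and reapplying (ii) for $u$.
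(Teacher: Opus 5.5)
The paper does not prove this statement. It quotes it as a simplified form of Bender's theorem~\cite{Bender1975}, with the gargantuan conclusion carried over from the authors' earlier work. Your Steps 1--3 are essentially Bender's original argument: a polynomial head plus a tail, a Taylor expansion of $F$ about the head, and a bound on powers of the tail obtained by iterating (ii). They are correct, with $\approx$ read as $v_n=\sum_{k<r}w_ku_{n-k}+O(u_{n-r})$ for every $r$; this is the right reading, since some $w_k$ may vanish.

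The induction in Step 3 closes directly, so no fallback trick is needed. Take $K$ the constant of (ii) at level $r$, and set $i=j-r(m-1)$ and $N=n-r(m-1)$. Then $[z^n]|T|^{m+1}\le K^{m-1}\sum_{i=r}^{N-r}|u_iu_{N-i}|\le K^m|u_{n-rm}|$. This does require the bound from (ii) to hold for all $N$, not only for large $N$, that is, $u_j\neq0$ for every $j\ge r$. So fix $r$ beyond the last zero of $(u_n)$, and recover smaller $r$ afterwards via (i). You also do not need to move $r$ to get smallness, and moving it changes $K$ anyway. Smallness comes from (i), with $\varepsilon$ chosen after $K$ is fixed. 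The region where $n-k-r(m-1)$ stays bounded has $m$ running up to about $n/r$, but it contributes only $O(nC^n)=o(u_{n-r})$.

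In Step 4 you take the first index $k_0$ with $w_{k_0}\neq0$. It exists precisely when $F$ is nonconstant; then $k_0=dj_0$, where $d$ is the order of $U$ and $j_0$ the order of $F'$. In that case your shift argument works. You still need to treat separately the terms of (ii) with $k$ or $n-k$ below the threshold where $|v_k|\le C|u_{k-k_0}|$ holds; each is $O(u_{n-r-k_0})=O(v_{n-r})$ by (i).

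If $F$ is constant, however, then $W\equiv0$ and $v_n=0$ for all $n\ge1$, so $(v_n)$ is not gargantuan. The last clause of the statement is therefore false in this degenerate case and needs the extra hypothesis that $F$ is nonconstant. This is harmless for the paper's applications with $m\ge1$, but your proof should state it rather than silently assume that $k_0$ exists.
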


In order to verify the gargantuan property in applications, we rely on the following sufficient condition.

\begin{lemma}[Lemma~2.4 in~\cite{MonteilNurligareevSET}]
\label{lemma: sufficient conditions for gargantuan sequence}
 If a sequence $(a_n)$ satisfies the following two conditions,
 \begin{align*}\label{equation: sufficient conditions for gargantuan sequence}
  \emph{ (i)' }\quad &
  na_{n-1} = O(a_n),\,\,\mbox{ as }\, n\to\infty, \\
  \emph{ (ii)' }\quad &
  x_k = |a_ka_{n-k}| \mbox{ is decreasing for } k < n/2 \mbox{ and for all but finitely many } n,
 \end{align*}
 then $(a_n)$ is gargantuan.
\end{lemma}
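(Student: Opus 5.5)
The statement is Lemma~2.4 on sufficient conditions for gargantuan sequences. The plan is to check conditions (i) and (ii) of Definition~\ref{def: gargantuan species} directly from (i)' and (ii)'.

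\textbf{Condition (i).} From (i)' there is a constant $C$ with $|a_{n-1}|\le C|a_n|/n$ for large $n$. Hence $a_{n-1}/a_n\to0$. Here $a_n\neq0$ for large $n$ is implicit, since (ii)' with $k=0$, or (i)' itself, only makes sense when the $a_n$ are eventually nonzero.

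Iterating (i)' also gives the bound that drives the rest of the argument. For fixed $j$,
\[
|a_{n-j}|\le C^j\,\frac{(n-j)!}{n!}\,|a_n| = O\big(|a_n|/n^j\big).
\]

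\textbf{Condition (ii).} Fix $r$ and consider the sum $S=\sum_{k=r}^{n-r}|a_ka_{n-k}|$. By the symmetry $k\leftrightarrow n-k$, the sum is at most twice the sum over $r\le k\le n/2$.

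By (ii)', for large $n$ the terms $x_k$ are decreasing on $k<n/2$. The strategy is to keep the first few terms $k=r,r+1,r+2$ explicitly and bound every remaining term by $x_{r+2}$; the term $k=n/2$ is also at most $x_{r+2}$ by monotonicity up to it. This gives
\[
S\le 2\big(x_r+x_{r+1}+n\,x_{r+2}\big).
\]
Each piece is then estimated with the iterated bound. The first is $x_r=|a_r|\,|a_{n-r}|=O(a_{n-r})$. The second is $x_{r+1}=|a_{r+1}|\,|a_{n-r-1}|=O(|a_{n-r}|/n)$. The third is $n\,x_{r+2}=n\,|a_{r+2}|\,|a_{n-r-2}|=O(n\cdot|a_{n-r}|/n^2)$. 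All three are $O(a_{n-r})$, so condition (ii) holds.

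\textbf{Main obstacle.} The delicate point is that monotonicity alone only bounds each term by $x_r$, which would give $S=O(n\,a_{n-r})$ and lose a factor of $n$. Peeling off enough initial terms, so that the factor $1/n^2$ from iterating (i)' absorbs the at most $n$ remaining terms, is exactly what fixes this.

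A small care point is that "for all but finitely many $n$" only yields the estimate for large $n$. That is all the $O$-statement requires.
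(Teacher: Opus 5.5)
Your proof is correct. The paper gives no proof of this lemma and only quotes it from \cite{MonteilNurligareevSET}, but your argument is the standard one for it: split off $k=r,r+1$, bound the remaining $O(n)$ terms by $x_{r+2}$ using (ii)', and absorb the factor $n$ through $a_{n-r-2}=O(a_{n-r}/n^2)$ obtained by iterating (i)'. You rightly flag the one tacit hypothesis, $a_n\neq0$ eventually; note that under (i)' a zero $a_n=0$ at a large index forces $a_{n-1}=0$ and so on downward, so the only alternative is an eventually-zero sequence, which the definition of gargantuan already excludes.
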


The property of being gargantuan is closed with respect to the point-wise product and to point-wise multiplication with a geometric progression.

\begin{lemma}[Lemma~2.5 in~\cite{MonteilNurligareevSET}]
\label{lemma: a_nb_n is gargantuan}
 If two sequences $(a_n)$ and $(b_n)$ are gargantuan, then the sequence $(a_nb_n)$ is also gargantuan.
\end{lemma}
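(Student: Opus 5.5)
The plan is to reduce pointwise products to a single estimate. Write the two sequences as $a_n$ and $b_n$ and set $c_n=a_nb_n$; I will check conditions (i) and (ii) of Definition~\ref{def: gargantuan species} for $(c_n)$.

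Condition (i) is immediate. Since $a_{n-1}/a_n\to0$ and $b_{n-1}/b_n\to0$, their product gives $c_{n-1}/c_n\to0$. Both ratios are defined for large $n$, because each sequence is nonzero for almost all $n$.

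For condition (ii), fix $r\ge0$. The idea is to bound one factor of each summand uniformly and then apply the gargantuan property of the other sequence. Concretely, I would show that for $r\le k\le n-r$,
\[
 |b_kb_{n-k}|\le \sum_{j=r}^{n-r}|b_jb_{n-j}| \le C_r|b_{n-r}|,
\]
which is condition (ii) for $(b_n)$, since every summand is bounded by the whole sum. This gives
\[
 \sum_{k=r}^{n-r}|c_kc_{n-k}| = \sum_{k=r}^{n-r}|a_ka_{n-k}|\,|b_kb_{n-k}| \le C_r|b_{n-r}|\sum_{k=r}^{n-r}|a_ka_{n-k}| \le C_rC'_r\,|b_{n-r}|\,|a_{n-r}|,
\]
and the right-hand side is $O(c_{n-r})$, as required.

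Little here is delicate. The only point to watch is that the $O$-constants depend on $r$ but not on $n$, which is exactly how condition (ii) is stated. Small values of $n$, where the sum may be empty or some terms vanish, are absorbed into the constant.
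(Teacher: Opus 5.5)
Your proof is correct and follows essentially the same route as the source. This paper only quotes the lemma from \cite{MonteilNurligareevSET}; the standard argument there (like the proof of the geometric-progression lemma that follows it here) gets condition (i) by multiplying the two ratios, and gets condition (ii) by bounding the sum of the products $|a_ka_{n-k}|\,|b_kb_{n-k}|$ by the product of the two sums, which is exactly your termwise estimate. Your closing remark about small $n$ is harmless but unnecessary, since condition (ii) is an asymptotic statement as $n\to\infty$.
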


\begin{lemma}\label{lemma: a_nс^n is gargantuan}
 If a sequence $(a_n)$ is gargantuan and $b_n=b_0c^n$ with $c\neq0$, then the sequence $(a_nb_n)$ is gargantuan.
\end{lemma}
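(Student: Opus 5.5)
We check the two conditions of Definition~\ref{def: gargantuan species} for $c_n=a_nb_n=b_0a_nc^n$. The constant $b_0$ only rescales everything, so it changes neither condition. We may assume $b_0\neq0$: otherwise $c_n$ vanishes identically, which the convention in Definition~\ref{def: gargantuan species} that $a_n\neq0$ for almost all $n$ excludes. So it suffices to treat $b_n=c^n$.

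For condition (i), note that
\[
 \frac{c_{n-1}}{c_n}=\frac{a_{n-1}}{a_n}\cdot\frac{1}{c}.
\]
This tends to $0$ because $a_{n-1}/a_n\to0$ and $c\neq0$ is a fixed constant.

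For condition (ii), fix $r\in\bbZ_{\geqslant0}$ and use that each product contains the same total power of $c$:
\[
 \sum_{k=r}^{n-r}|c_kc_{n-k}|
  =|c|^{n}\sum_{k=r}^{n-r}|a_ka_{n-k}|
  =|c|^{n}\,O(a_{n-r}).
\]
Since $|c|^n=|c|^{r}\,|c|^{n-r}$ and $|c|^r$ is a constant depending only on $r$, the right-hand side is
\[
 O\big(|c|^{n-r}|a_{n-r}|\big)=O\big(c_{n-r}\big),
\]
with the $O$ read in absolute value as in the definition. This is exactly condition (ii) for $(c_n)$.

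There is no real obstacle here. The only point needing care is that $O$-bounds are insensitive to the constant factor $|c|^r$, and this holds because $r$ is fixed before $n\to\infty$. Alternatively, for $|c|\ge1$ one could view $(c^n)$ itself as a gargantuan-like factor and appeal to Lemma~\ref{lemma: a_nb_n is gargantuan}. However, $(c^n)$ fails condition (i), since $c^{n-1}/c^n=1/c\not\to0$, so the direct verification above is the route to take.
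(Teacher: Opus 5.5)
Your proof is correct and follows the paper's argument essentially verbatim: condition (i) comes from the extra factor $1/c$ in the ratio, and condition (ii) comes from factoring out the common power $|c|^n$ and absorbing the constant $|c|^r$ (the paper writes this as $b_0^2|c^n| = |b_rb_{n-r}|$). Your remark on $b_0\neq0$ is apt, but strictly speaking it is a tacit hypothesis of the lemma (the conclusion fails when $b_0=0$) rather than something that can be assumed without loss of generality.
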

\begin{proof}
 The first condition of Definition~\ref{def: gargantuan species} holds, since
 \[
  \dfrac{a_{n-1}b_{n-1}}{a_nb_n} = \dfrac{1}{c} \cdot \dfrac{a_{n-1}}{a_n}.
 \]
 As for the second condition, it holds as well, because
 \[
  \sum\limits_{k=r}^{n-r} \big|(a_kb_k)\cdot(a_{n-k}b_{n-k})\big|
   =
  b_0^2|c^n|\cdot\sum\limits_{k=r}^{n-r}|a_ka_{n-k}|
   =
  |b_rb_{n-r}|\cdot O(a_{n-r})
   =
  O(a_{n-r}b_{n-r}).
 \]
\end{proof}

\section{Asymptotic theorems and ``anti-SEQ'' operator}
\label{sec: main result}

This section is devoted to our general asymptotic result.
Given a species of structures~$\mcA$ that can be represented as a composite $\mcA=\mcE\circ\mcB$, our objective is to describe the asymptotic behavior of its subspecies $\mcE_m\circ\mcB$ for any positive integer $m$.
From a structural point of view, the species of structures $\mcB$ are interpreted as connected components of the species of structures $\mcA$.
Thus, $\mcE_m\circ\mcB$ represents subspecies of $\mcA$ with exactly $m$ connected components.
For simplicity, we denote these subspecies by $\mcB^{\{m\}}$.

We begin by establishing the asymptotic probability that a random $\mcA$-structure on~$[n]$ belongs to $\mcB^{\{m\}}$ (Section~\ref{subsec: asymptotic theorem for E}).
In particular, in the case where $m=1$, this gives us the probability that an $\mcA$-structure is connected in terms of the ``anti-$\SEQ$'' operator.
We then combinatorially interpret the coefficients appearing in these complete asymptotic expansions (Section~\ref{subsec: asymptotic interpretation}).
Finally, we generalize the obtained result to different settings.
More precisely, in Section~\ref{subsec: asymptotic theorem for L and C} we consider species of the form $\mcA = \mcL\circ\mcB$ and $\mcA = \mcCP\circ\mcB$, while Section~\ref{subsec: p-periodic sequences} is devoted to the case where the sequence $(\a_n)$ of total weights of the species $\mcA$ on the sets $[n]$ is $p$-periodic for some $p>1$.

\subsection{Asymptotic theorem}
\label{subsec: asymptotic theorem for E}

\begin{theorem}\label{theorem: species E asymptotics}
 Let $\mcA$ be a gargantuan (weighted) species of structures that can be represented as $\mcA=\mcE\circ\mcB$ for some (weighted) species of structures~$\mcB$.
 Assume that $m,n\in\bbZ_{\geqslant0}$, and $s\in\mcA$ is a random $\mcA$-structure on $[n]$.
 In this case,
 \begin{equation}\label{formula: species E asymptotics}
  \bbP(s\in\mcB^{\{m\}})
   \approx 
  \sum\limits_{k\ge0}\d_{k,m}\cdot\binom{n}{k}\cdot\dfrac{\a_{n-k}}{\a_n},
 \end{equation}
 where $\a_n$ and $\d_{n,m}$ are the total weights on the set $[n]$ of the species of structures $\mcA$ and $\mcD(m) = \mcB^{\{m-1\}}(\mcE^{-1}\circ\mcB)$, respectively.
\end{theorem}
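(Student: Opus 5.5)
We apply Bender's theorem (Theorem~\ref{theorem: Bender}) at the level of exponential generating series and then read the coefficient series as a species. Write $A(z)=\sum_n \a_n z^n/n!$ and $B(z)$ for the series of $\mcB$, so that $A(z)=e^{B(z)}$. Setting $u_n=\a_n/n!$, the sequence $(u_n)$ is gargantuan by hypothesis. Since $\a_0=1$, Bender's theorem requires $u_0=0$, so we take $U(z)=A(z)-1$. Then $B(z)=\log(1+U(z))$. The species $\mcB^{\{m\}}=\mcE_m\circ\mcB$ has series
\[
\frac{B(z)^m}{m!}=F\big(U(z)\big),\qquad F(x)=\frac{\log(1+x)^m}{m!},
\]
and $F$ is analytic near the origin.

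Bender's theorem now gives
\[
\frac{|\mcB^{\{m\}}[n]|_w}{n!}\approx\sum_{k\ge0}w_k\,\frac{\a_{n-k}}{(n-k)!},
\]
where $\sum_k w_k z^k=F'(U(z))$. We compute
\[
F'(x)=\frac{\log(1+x)^{m-1}}{(m-1)!}\cdot\frac{1}{1+x}.
\]
Substituting $x=U(z)$ yields $\frac{B(z)^{m-1}}{(m-1)!}\,e^{-B(z)}$. At the species level this is $\mcB^{\{m-1\}}\cdot(\mcE^{-1}\circ\mcB)=\mcD(m)$. Here we use that $\mcE^{-1}\circ\mcB$ is a well-defined virtual species, because $\mcB_0=\Zero$ and the multiplicative inverse of $\mcE$ exists, and that the series of virtual species commute with products and compositions. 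Hence $w_k=\d_{k,m}/k!$.

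Multiplying the expansion by $n!/\a_n$ gives
\[
\bbP(s\in\mcB^{\{m\}})=\frac{|\mcB^{\{m\}}[n]|_w}{\a_n}\approx\sum_{k\ge0}\frac{\d_{k,m}}{k!}\cdot\frac{n!}{(n-k)!}\cdot\frac{\a_{n-k}}{\a_n},
\]
which is exactly \eqref{formula: species E asymptotics}. For $m=0$ the statement is trivial: $\mcB^{\{-1\}}=\Zero$, so all $\d_{k,0}=0$ and the probability is $0$ for $n>0$.

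The main obstacle is checking that the result is a genuine asymptotic expansion in the sense of Notation~\ref{notation: approx}. Rescaling by $n!/\a_n$ preserves the $O$-remainders. However, if some $\d_{k,m}=0$, the condition $f_{k+1}=o(f_k)$ fails for that term. In that case we interpret the expansion, as Bender's theorem does, by dropping the zero terms. We then verify that $\binom{n}{k+1}\a_{n-k-1}=o\big(\binom{n}{k}\a_{n-k}\big)$, which holds because gargantuan condition (ii) forces $n\a_{n-1}/\a_n\cdot(a_{n-k-1}/a_{n-k})$-type ratios to tend to $0$. Concretely, in terms of $u_n$ this is $u_{n-k-1}/u_{n-k}\to0$, which is condition (i).

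We also need to handle that $u_0$ was shifted and that $u_n\neq0$ for large $n$. Both follow from the definition of a gargantuan species.
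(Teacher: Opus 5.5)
Your proof is correct and is essentially the paper's argument. Both apply Bender's theorem to $U(z)=A(z)-1$ (the series of $\mcA_+$) and $F(x)=\log^m(1+x)/m!$ (the series of $\mcE_m\circ\mcE_+^{(-1)}$); the only difference is that you compute $F'$ on power series and then recognize $\frac{B(z)^{m-1}}{(m-1)!}e^{-B(z)}$ as the series of $\mcD(m)$, whereas the paper differentiates $\mcE_m\circ\mcE_+^{(-1)}$ at the level of virtual species. Your handling of the shift $u_0=0$, the case $m=0$, and the decrease of consecutive terms goes beyond what the paper makes explicit; note only that $\frac{\binom{n}{k+1}\a_{n-k-1}}{\binom{n}{k}\a_{n-k}}=\frac{1}{k+1}\cdot\frac{u_{n-k-1}}{u_{n-k}}\to0$ uses condition (i) alone, not condition (ii) as your wording first suggests.
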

\begin{proof}
 The main idea of the proof is to apply Theorem~\ref{theorem: Bender}.
 First, we express the species $\mcB^{\{m\}} = \mcE_m\circ\mcB$ via $\mcA_+ = \mcA - \One$:
 \[
  \mcA_+ = (\mcE_++\One)\circ\mcB - \One = \mcE_+\circ\mcB,
 \]
 and hence,
 \[
  \mcB = \mcE_+^{(-1)}\circ\mcA_+
  \qquad\mbox{and}\qquad
  \mcB^{\{m\}} = \mcE_m\circ\mcE_+^{(-1)}\circ\mcA_+.
 \]
 Second, we observe that the exponential generating series 
 \[
  \left(\mcE_m\circ\mcE_+^{(-1)}\right)(z) = \dfrac{\big(\log(1+z)\big)^m}{m!}
 \]
 is analytic in some neighborhood of origin.
 Therefore, Theorem~\ref{theorem: Bender} is applicable to the exponential generating series of $\mcU = \mcA_+$ and $F = \mcE_m\circ\mcE_+^{(-1)}$ taken as the formal power series $U(z)$ and the analytic function $F(x)$, respectively.
 Taking into account that $F\big(U(z)\big)$ is the exponential generating series of the species $\mcV=\mcB^{\{m\}}$, this gives
 \begin{equation}\label{formula: asymptotics proof}
  \b_n^{\{m\}} \approx \sum\limits_{k\ge0}\binom{n}{k}\d_{k,m}\a_{n-k},
 \end{equation}
 where $\b_k^{\{m\}}$ and $\d_{k,m}$ are the total weights on the set $[k]$ of the species of structures $\mcB^{\{m\}}$ and $\mcD(m) = F'\circ\mcA_+$, respectively.
 Since
 \[
  F' = \big(\mcE_m\circ\mcE_+^{(-1)}\big)' =
  \Big(\mcE_m'\circ\mcE_+^{(-1)}\Big)
   \cdot \Big(\big((\mcE_+)'\big)^{-1}\circ\mcE_+^{(-1)}\Big),
 \]
 and taking into account that $\mcE'_m=\mcE_{m-1}$ and $(\mcE_+)'=\mcE$, we have
 \[
  \mcD(m) = \big(\mcE_{m-1}\cdot\mcE^{-1}\big)\circ\mcB =
  \mcB^{\{m-1\}}\cdot(\mcE^{-1}\circ\mcB). 
 \]
 Now, using the relation $\bbP(s\in\mcB)=\b_n^{\{m\}}/\a_n$, we divide both sides of formula~\eqref{formula: asymptotics proof} by $\a_n$ to complete the proof.
\end{proof}

\begin{corollary}\label{cor: leading term of species E asymptotics}
 If $\a_1\ne0$, then the leading term of asymptotic expansion~\eqref{formula: species E asymptotics} satisfies
 \begin{equation}\label{formula: leading term of species E asymptotics}
  \bbP(s\in\mcB^{\{m\}}) 
   =
  \binom{n}{m-1}\cdot\dfrac{\a_1^{m-1}\a_{n-m+1}}{\a_n}
   +
  O\left(n^m\cdot\dfrac{\a_{n-m}}{\a_n}\right),
 \end{equation}
 where $(n)_{m-1} = n(n-1)\ldots(n-m+2)$ are the falling factorials.
\end{corollary}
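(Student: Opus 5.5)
We take the expansion~\eqref{formula: species E asymptotics} of Theorem~\ref{theorem: species E asymptotics} and show that its first nonzero coefficient is $\d_{m-1,m}$, with value $\a_1^{m-1}$. The error term then comes from the definition of $\approx$ in Notation~\ref{notation: approx}.

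\textbf{Step 1: the coefficients $\d_{k,m}$ for $k\le m-1$.} We have $\mcD(m)=\mcB^{\{m-1\}}\cdot(\mcE^{-1}\circ\mcB)$. An $\mcB^{\{m-1\}}$-structure has $m-1$ components, each on a nonempty set, so it needs at least $m-1$ points. Also $\mcE^{-1}\circ\mcB=\One+(\text{terms on nonempty sets})$ because $\mcB_0=\Zero$. Hence $\d_{k,m}=0$ for $k<m-1$. For $k=m-1$ the only surviving contribution is $\mcB^{\{m-1\}}[m-1]\times\One[\emptyset]$. Here the $m-1$ components are forced to be singletons, so its total weight is
\[
\d_{m-1,m}=\b_1^{m-1}.
\]
Since $\mcA_1=\mcB_1$, we have $\b_1=\a_1$, which gives $\d_{m-1,m}=\a_1^{m-1}$. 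This equals $(m-1)!\cdot\frac{\a_1^{m-1}}{(m-1)!}$, consistent with $\mcE_{m-1}(z)=z^{m-1}/(m-1)!$ composed with $\mcB(z)=\a_1 z+\dots$.

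\textbf{Step 2: truncating the expansion.} Notation~\ref{notation: approx} with $r=m-1$ gives
\[
\bbP(s\in\mcB^{\{m\}})=\sum_{k=0}^{m-1}\d_{k,m}\binom{n}{k}\frac{\a_{n-k}}{\a_n}+O\Big(\d_{m,m}\binom{n}{m}\frac{\a_{n-m}}{\a_n}\Big).
\]
Since $\d_{m,m}$ is a fixed constant and $\binom{n}{m}=O(n^m)$, the remainder is $O\big(n^m\a_{n-m}/\a_n\big)$. Together with Step~1 this yields~\eqref{formula: leading term of species E asymptotics}.

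\textbf{Main obstacle.} The only delicate point is that Notation~\ref{notation: approx} formally assumes every term is nonzero and dominates the next. Here the terms with $k<m-1$ vanish, so we should note that Bender's expansion (Theorem~\ref{theorem: Bender}) holds with remainder $O\big(\binom{n}{r+1}\a_{n-r-1}/\a_n\big)$ at each truncation order $r$, whether or not some coefficients are zero. The gargantuan property $\a_{n-1}/\a_n\to0$ then makes each term $o$ of the previous nonzero one. Finally, if $\d_{m,m}=0$ the bound above holds trivially, so no extra case is needed.
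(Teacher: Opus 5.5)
Your proposal is correct and follows the paper's own route. Both arguments show that the first nonzero coefficient of $\mcD(m)=\mcB^{\{m-1\}}\cdot(\mcE^{-1}\circ\mcB)$ is $\d_{m-1,m}=\a_1^{m-1}$, and both then truncate the expansion after that term. You get the coefficient by counting the singleton partition, where the paper reads it off $\mcB^{\{m-1\}}(z)=\a_1^{m-1}z^{m-1}/(m-1)!+O(z^m)$. Your use of Bender's remainder $O\bigl(\binom{n}{r+1}\a_{n-r-1}/\a_n\bigr)$ handles the vanishing coefficients more carefully than the paper does.

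One small slip: the gargantuan hypothesis is stated for $\a_n/n!$. What it gives you is therefore $n\,\a_{n-1}/\a_n\to0$, which is exactly what makes each term $o$ of the previous one; $\a_{n-1}/\a_n\to0$ alone would not be enough.
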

\begin{proof}
 Given a fixed positive integer $m$, the leading term of~\eqref{formula: species E asymptotics} corresponds to the first nonzero element of the sequence $(\d_{k,m})$.
 The behavior of this element is determined by the generating series of the species $\mcD(m) = \mcB^{\{m-1\}}\cdot(\mcE^{-1}\circ\mcB)$.
 Relation~\eqref{formula: leading term of species E asymptotics} now follows from the fact that
 \[
  \mcB^{\{m-1\}}(z)
   = 
  \dfrac{(\a_1z + \ldots)^{m-1}}{(m-1)!}
   =
  \a_1^{m-1}\cdot\dfrac{z^{m-1}}{(m-1)!} + O(z^m),
 \]
 and that, consequently, the first nonzero element is $\d_{m-1,m} = \a_1^{m-1}$.
\end{proof}

\begin{lemma}\label{lemma: magical species identity}
 The virtual species $\mcE^{-1}\circ\mcE_+^{(-1)}$ and $\big(\One-\mcL_+^{(-1)}\big)$ are equipotent.
\end{lemma}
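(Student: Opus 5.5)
Equipotence means that the two virtual species have the same exponential generating series, so the plan is to compute both series explicitly and compare them. No isomorphism of virtual species is needed.

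\emph{Left-hand side.} Since $\mcE_+(z)=e^z-1$, its compositional inverse is $\mcE_+^{(-1)}(z)=\log(1+z)$. The species $\mcE_+^{(-1)}$ exists because $(\mcE_+)_0=\Zero$ and $(\mcE_+)_1=\mcZ$, and the series of a substitutional inverse is the compositional inverse of the series. Likewise $\mcE^{-1}(z)=e^{-z}$. Because series of compositions are compositions of series, we get
\[
 \big(\mcE^{-1}\circ\mcE_+^{(-1)}\big)(z)=e^{-\log(1+z)}=\dfrac{1}{1+z}.
\]

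\emph{Right-hand side.} Here $\mcL_+(z)=\frac{1}{1-z}-1=\frac{z}{1-z}$. Again $(\mcL_+)_0=\Zero$ and $(\mcL_+)_1=\mcZ$, so the inverse for substitution $\mcL_+^{(-1)}$ exists. Its series $y(z)$ solves $\frac{y}{1-y}=z$, which gives $y=\frac{z}{1+z}$. Hence
\[
 \big(\One-\mcL_+^{(-1)}\big)(z)=1-\dfrac{z}{1+z}=\dfrac{1}{1+z}.
\]

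The two series coincide, which proves equipotence.

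The only point that needs care is justifying that the generating series respects substitution and substitutional inverses for virtual species. This is stated in the paper's summary of virtual species, via the relations $F(F^{(-1)}(z))=z$ and $(F\circ G)(z)=F(G(z))$, and it holds in the weighted setting as well. As an optional remark, one could note that both sides equal $\sum_{k\ge0}(-1)^k\mcZ^k$ at the level of series, with counts $(-1)^k k!$. One should not claim species isomorphism: $\mcE^{-1}\circ\mcE_+^{(-1)}$ need not be isomorphic to the linear-order-based virtual species, and this is exactly why the lemma asserts only equipotence.
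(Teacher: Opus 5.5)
Your proof is correct and is the paper's own argument: it computes $\mcE^{-1}(z)=e^{-z}$, $\mcE_+^{(-1)}(z)=\log(1+z)$ and $\mcL_+^{(-1)}(z)=z/(1+z)$, then checks that both sides have series $1/(1+z)$. Your closing remark is wrong, however: since composition with $\mcE_+^{(-1)}$ respects sums and products on virtual species, $\mcE^{-1}\circ\mcE_+^{(-1)}=(\mcE\circ\mcE_+^{(-1)})^{-1}=(\One+\mcZ)^{-1}$, and $\mcL_+^{(-1)}=\mcZ(\One+\mcZ)^{-1}$ gives $\One-\mcL_+^{(-1)}=(\One+\mcZ)^{-1}$ too, so the two virtual species are actually isomorphic and the lemma's ``equipotent'' is simply weaker than the truth rather than a necessary restriction.
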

\begin{proof}
 Straightforward calculations show that
 \[
  \mcE^{-1}(z) = e^{-z},\qquad
  \mcE_+^{(-1)}(z) = \log(1+z),\qquad
  \mcL_+^{(-1)}(z) = \dfrac{z}{1+z},
 \]
 and hence,
 \[
  \mcE^{-1}\big(\mcE_+^{(-1)}(z)\big) + \mcL_+^{(-1)}(z) = \dfrac{1}{1+z} + \dfrac{z}{1+z} = 1.
 \]
\end{proof}

Due to Lemma~\ref{lemma: magical species identity}, we can interpret the coefficients appearing in Theorem~\ref{theorem: species E asymptotics} in terms of the species $\mcL_+^{(-1)}$ understood as an ``anti-$\SEQ$'' operator.
Indeed, we have
 \[
  \mcL_+^{(-1)}
   \equiv 
  \One - \mcE^{-1}\circ\mcE_+^{(-1)}.
 \]
Hence, in view of Theorem~\ref{theorem: species E asymptotics},
if a gargantuan species of structures $\mcA$ satisfies $\mcA = \mcE \circ \mcB$,
then the asymptotic behavior of $\mcB$ is encoded by the species of structures
 \[
  \mcD(1) = \mcE^{-1}\circ\mcB 
  \equiv
  \big(\One -\mcL_+^{(-1)}\big)\circ\mcA_+ = \One - \mcL_+^{(-1)}\circ\mcA_+,
 \]
In other words, the sequences of $\One-\mcD(1)$ are equipotent to $\mcA$.

\subsection{Interpretation of the asymptotic coefficients}
\label{subsec: asymptotic interpretation}

The constants $\d_{k,m}$ appearing in the statement of Theorem~\ref{theorem: species E asymptotics} possess a combinatorial interpretation in terms of the species of structures $\mcB$ that serves as a building block.
However, the interpretation is not straightforward.
To establish its explicit form, we first need to interpret the species of structures $\mcE^{-1}$.

\begin{lemma}\label{lemma: E^(-1) = sum((-1)^(k)E_k)}
 The virtual species of structures $\mcE^{-1}$ is equipotent to the alternating sum
 \begin{equation}\label{formula:tilde(E)}
  \tilde{\mcE} = \One - \mcE_1 + \mcE_2 - \mcE_3 + \ldots \,,
 \end{equation}
 \emph{i.e.} their exponential generating series coincide.
\end{lemma}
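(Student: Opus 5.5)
The claim is only about equipotence, so it suffices to compare exponential generating series. I will compute both sides and check that they agree coefficientwise.

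First, the left-hand side. By the definition of the multiplicative inverse recalled in the section on virtual species, $\mcE^{-1}=\sum_{k\ge0}(-1)^k(\mcE_+)^k$, and its exponential generating series is $1/\mcE(z)=e^{-z}$. The paper already uses this fact in the proof of Lemma~\ref{lemma: magical species identity}.

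Second, the right-hand side. The virtual species $\tilde{\mcE}$ is a locally finite sum, since each term $\mcE_k$ is concentrated on sets of cardinality $k$. So on each finite set $U$ exactly one summand contributes, and the series of $\tilde{\mcE}$ is well defined. Using $\mcE_k(z)=z^k/k!$ and the additivity of generating series for sums and differences of (virtual) species, we get
\[
 \tilde{\mcE}(z)=\sum_{k\ge0}(-1)^k\frac{z^k}{k!}=e^{-z}.
\]

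Comparing the two computations, $\mcE^{-1}(z)=\tilde{\mcE}(z)$, so the two virtual species are equipotent.

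As a combinatorial sanity check, I would also verify the identity directly, using that the multiplicative inverse is unique. The product $\mcE\cdot\tilde{\mcE}$ on an $n$-set is a signed sum over subsets $S\subseteq U$ with sign $(-1)^{|S|}$. This sum equals $\sum_k(-1)^k\binom{n}{k}=0$ for $n\ge1$, and it equals $1$ for $n=0$. Hence $\mcE\cdot\tilde{\mcE}\equiv\One$, which gives $\tilde{\mcE}\equiv\mcE^{-1}$.

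There is no real obstacle here. The only point needing care is that the infinite alternating sum defines a legitimate virtual species, which follows from the local finiteness noted above.
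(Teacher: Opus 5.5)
Your proposal is correct and follows the same approach as the paper, which just observes that both exponential generating series equal $e^{-z}=1-z+\frac{z^2}{2!}-\frac{z^3}{3!}+\cdots$. Your remarks on local finiteness and your signed-subset check that $\mcE\cdot\tilde{\mcE}\equiv\One$ are valid additions. The latter is correctly stated only as an equipotence, not an equality of species, which matches the paper's subsequent remark.
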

\begin{proof}
 The corresponding exponential generating series both are equal to
 \[
  \dfrac{1}{e^z} = e^{-z} =
  1 - z + \dfrac{z^2}{2!} - \dfrac{z^3}{3!} + \ldots
 \] 
\end{proof}
\begin{remark}\label{remark: E^(-1) = sum((-1)^(k)E_k)}
 The species $\mcE^{-1}$ and $\tilde{\mcE}$ indicated in Lemma~\ref{lemma: E^(-1) = sum((-1)^(k)E_k)} are not equal.
 To see this, it suffices to consider their restrictions on cardinality $2$.
 Here, we have $(\mcE^{-1})_1=\mcE_1^2-\mcE_2$ and $\tilde{\mcE}_1=\mcE_2$, but $\mcE_1^2\neq2\mcE_2$.
\end{remark}

Lemma~\ref{lemma: E^(-1) = sum((-1)^(k)E_k)} allows us to think of the coefficients of the species $\mcD(1)=\mcE^{-1}\circ\mcB$ as the coefficients of the alternating sum
 \[
  \tilde{\mcE} \circ \mcB =
  \One - \mcE_1\circ\mcB + \mcE_2\circ\mcB - \mcE_3\circ\mcB + \ldots = 
  \One - \mcB^{\{1\}} + \mcB^{\{2\}} - \mcB^{\{3\}} + \ldots
 \]
The latter relation can be interpreted similarly to
 \[
  \mcA = \mcE \circ \mcB =
  \One + \mcB^{\{1\}} + \mcB^{\{2\}} + \mcB^{\{3\}} + \ldots,
 \]
where $\mcB$-structures are thought of as connected $\mcA$-structures.
Indeed, the species $\tilde{\mcE}\circ\mcB$ consists of the same structures as $\mcA$, but a structure is considered negative if the number of its connected components is odd.
In the case where the species $\mcA$ and $\mcB$ are not weighted, this provides a combinatorial meaning for the coefficients of $\mcD(1)$ in terms of two counting sequences, which we express by the following proposition.

\begin{proposition}\label{prop: d_(k,1) interpretation}
 Let $\mcA$ be a gargantuan species of structures that can be represented as $\mcA=\mcE\circ\mcB$ for some species of structures~$\mcB$.
 Assume that $n$ is a positive integer and $s\in\mcA$ is a random $\mcA$-structure on $[n]$.
 In this case,
 \begin{equation}\label{formula: species E_1 asymptotics}
  \bbP(s\mbox{ is connected})
   \approx 
  1 - \sum\limits_{k\ge1}\d_k\cdot\binom{n}{k}\cdot\dfrac{\a_{n-k}}{\a_n},
 \end{equation}
 where $\a_n$ is the counting sequence of $\mcA$,
 \begin{equation}\label{formula: d_(k,1) interpretation}
  \d_{k}
   = 
  \#\{s'\in\mcA_k \mid \pi_0(s') \mbox{ is odd}\}
   -
  \#\{s'\in\mcA_k \mid \pi_0(s') \mbox{ is even}\}
\end{equation}
 and $\pi_0(s')$ is the number of connected components of the structure $s'$.
\end{proposition}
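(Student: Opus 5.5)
This is the case $m=1$ of Theorem~\ref{theorem: species E asymptotics}, with the coefficients $\d_{k,1}$ read off via Lemma~\ref{lemma: E^(-1) = sum((-1)^(k)E_k)}. For $m=1$ we have $\mcB^{\{1\}}=\mcE_1\circ\mcB=\mcB$, so the event $s\in\mcB^{\{1\}}$ is exactly the event that $s$ is connected. Also $\mcB^{\{0\}}=\mcE_0\circ\mcB=\One$, hence $\mcD(1)=\mcE^{-1}\circ\mcB$. Theorem~\ref{theorem: species E asymptotics} then gives
\[
 \bbP(s\mbox{ is connected})\approx\sum_{k\ge0}\d_{k,1}\binom{n}{k}\frac{\a_{n-k}}{\a_n},
\]
where $\d_{k,1}$ is the total weight of $\mcE^{-1}\circ\mcB$ on $[k]$. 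Since everything is unweighted, this total weight equals $k!$ times the coefficient of $z^k$ in the exponential generating series.

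Next I identify these coefficients. By Lemma~\ref{lemma: E^(-1) = sum((-1)^(k)E_k)}, the species $\mcE^{-1}$ and $\tilde{\mcE}$ have the same exponential generating series. Because the generating series of a composition is the composition of the generating series, $\mcE^{-1}\circ\mcB$ and $\tilde{\mcE}\circ\mcB$ are also equipotent. Expanding,
\[
 \tilde{\mcE}\circ\mcB=\sum_{j\ge0}(-1)^j\,\mcE_j\circ\mcB=\sum_{j\ge0}(-1)^j\mcB^{\{j\}}.
\]
The term $\mcB^{\{j\}}[k]$ is exactly the set of $s'\in\mcA_k$ with $\pi_0(s')=j$. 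Therefore
\[
 \d_{k,1}=\#\{s'\in\mcA_k\mid\pi_0(s')\mbox{ even}\}-\#\{s'\in\mcA_k\mid\pi_0(s')\mbox{ odd}\}.
\]

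It remains to handle the $k=0$ term and the sign. The only structure on $[0]$ is the empty one, with $\pi_0=0$, which is even. So $\d_{0,1}=1$, and this term contributes $\binom{n}{0}\a_n/\a_n=1$. For $k\ge1$ we have $\d_{k,1}=-\d_k$ with $\d_k$ as in~\eqref{formula: d_(k,1) interpretation}. Substituting gives~\eqref{formula: species E_1 asymptotics}.

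I expect no serious obstacle. The one point needing care is that Theorem~\ref{theorem: species E asymptotics} involves only total weights, that is, counting sequences. This is why equipotence, rather than isomorphism, of $\mcE^{-1}$ and $\tilde{\mcE}$ suffices, despite Remark~\ref{remark: E^(-1) = sum((-1)^(k)E_k)}.
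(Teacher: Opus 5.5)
Your proposal is correct and follows essentially the same route as the paper, which justifies the proposition through the discussion just before it: specialize Theorem~\ref{theorem: species E asymptotics} to $m=1$, so that $\mcD(1)=\mcE^{-1}\circ\mcB$, then use the equipotence $\mcE^{-1}\equiv\tilde{\mcE}$ to read the coefficients off $\tilde{\mcE}\circ\mcB=\sum_{j\ge0}(-1)^j\mcB^{\{j\}}$. Your explicit treatment of the $k=0$ term ($\d_{0,1}=1$), of the sign ($\d_{k,1}=-\d_k$ for $k\ge1$), and of why equipotence rather than isomorphism suffices is all accurate, and fills in details the paper leaves implicit.
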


More generally, the species $\mcB^{\{m-1\}}\cdot(\mcE^{-1}\circ\mcB)$ is equipotent to the alternate sum
 \[
  \mcB^{\{m-1\}} - \mcB^{\{m-1\}}\cdot\mcB^{\{1\}} + \mcB^{\{m-1\}}\cdot\mcB^{\{2\}} - \mcB^{\{m-1\}}\cdot\mcB^{\{3\}} + \ldots
 \]
For non-weighted species, this leads to the following interpretation of the coefficients $\d_{k,m}$.

\begin{proposition}\label{prop: d_(k,m) interpretation}
 Let $\mcA$ be a gargantuan species of structures that can be represented as $\mcA=\mcE\circ\mcB$ for some species of structures~$\mcB$.
 Assume that $n$ is a positive integer and $s\in\mcA$ is a random $\mcA$-structure on $[n]$.
 In this case,
 \begin{equation}\label{formula: species E_m asymptotics}
  \bbP(s\mbox{ has }m\mbox{ connected components})
   \approx 
  \sum\limits_{k\ge0}\d_{k,m}\cdot\binom{n}{k}\cdot\dfrac{\a_{n-k}}{\a_n},
 \end{equation}
 where $\a_n$ is the counting sequence of $\mcA$,
 \begin{align*}
  \d_{k,m}
   & = 
  \#\big\{(s',s'')\in\mcA_i\cdot\mcA_{k-i} \mid 0\leqslant i\leqslant k,\,\, \pi_0(s')=m-1,\,\, \pi_0(s'') \mbox{ is even}\big\} \\
   & -
  \#\big\{(s',s'')\in\mcA_i\cdot\mcA_{k-i} \mid 0\leqslant i\leqslant k,\,\, \pi_0(s')=m-1,\,\, \pi_0(s'') \mbox{ is odd}\big\}
 \end{align*}
 and $\pi_0(s')$ is the number of connected components of the structure $s'$.
\end{proposition}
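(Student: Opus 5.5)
This follows from Theorem~\ref{theorem: species E asymptotics} once we compute the total weights $\d_{k,m}$ of $\mcD(m)=\mcB^{\{m-1\}}\cdot(\mcE^{-1}\circ\mcB)$. The species are unweighted, so every structure has weight $1$ and total weights are just counts. Since only the numbers $|\mcD(m)[k]|$ enter the expansion, equipotent replacements are allowed.

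First, by Lemma~\ref{lemma: E^(-1) = sum((-1)^(k)E_k)}, the exponential generating series of $\mcE^{-1}$ and $\tilde{\mcE}$ coincide. Composition acts on generating series by substitution, $(F\circ\mcB)(z)=F(\mcB(z))$. Hence $\mcE^{-1}\circ\mcB$ is equipotent to
\[
\tilde{\mcE}\circ\mcB=\sum_{j\ge0}(-1)^j\,\mcE_j\circ\mcB=\sum_{j\ge0}(-1)^j\mcB^{\{j\}}.
\]
Multiplication also acts on generating series, as the product. So $\mcD(m)$ is equipotent to
\[
\sum_{j\ge0}(-1)^j\,\mcB^{\{m-1\}}\cdot\mcB^{\{j\}}.
\]
Every sum here is locally finite: on $[k]$ only $j\le k$ contributes, because $\mcB_0=\Zero$ is required for the composition $\mcE\circ\mcB$ to be defined. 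The identification of series is therefore legitimate coefficientwise.

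Next, we unfold the product using the definition of species multiplication. A $\mcB^{\{m-1\}}\cdot\mcB^{\{j\}}$-structure on $[k]$ consists of three pieces of data:
\begin{itemize}
\item a splitting of $[k]$ into $U'\sqcup U''$ with $|U'|=i$;
\item an $\mcA$-structure $s'$ on $U'$ with exactly $m-1$ components;
\item an $\mcA$-structure $s''$ on $U''$ with exactly $j$ components.
\end{itemize}
Here we use that $\mcB^{\{r\}}=\mcE_r\circ\mcB$ is exactly the subspecies of $\mcA=\mcE\circ\mcB$ of structures with $\pi_0=r$. We then group the terms by the parity of $j=\pi_0(s'')$. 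Terms with $j$ even come with sign $+$ and terms with $j$ odd with sign $-$. This gives exactly the stated difference of the two cardinalities defining $\d_{k,m}$, where pairs in $\mcA_i\cdot\mcA_{k-i}$ are understood as product-species structures on $[k]$, so that the choice of splitting is included.

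Finally, substituting these $\d_{k,m}$ into formula~\eqref{formula: species E asymptotics} gives~\eqref{formula: species E_m asymptotics}, because $\bbP(s\in\mcB^{\{m\}})$ is precisely the probability of having $m$ components. No asymptotic work is needed beyond Theorem~\ref{theorem: species E asymptotics}.

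There is no real obstacle. The one point needing care is that $\mcE^{-1}$ and $\tilde{\mcE}$ are only equipotent, not isomorphic (Remark~\ref{remark: E^(-1) = sum((-1)^(k)E_k)}). One must therefore check that equipotence survives composition with $\mcB$ and multiplication by $\mcB^{\{m-1\}}$. It does, because both operations depend only on generating series. This is why we argue via series rather than species isomorphisms. One should also note the edge case $m=0$, where $\mcB^{\{-1\}}=\Zero$ and every coefficient vanishes. This is consistent with the statement: $\bbP(s\in\mcB^{\{0\}})=0$ for $n\ge1$.
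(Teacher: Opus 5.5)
Your proof is correct and follows the paper's own route: the paper also derives this proposition from Theorem~\ref{theorem: species E asymptotics}, using Lemma~\ref{lemma: E^(-1) = sum((-1)^(k)E_k)} to replace $\mcB^{\{m-1\}}\cdot(\mcE^{-1}\circ\mcB)$ by the equipotent alternating sum $\sum_{j\ge0}(-1)^j\,\mcB^{\{m-1\}}\cdot\mcB^{\{j\}}$ and then reading off the coefficients for unweighted species. You also spell out two points the paper leaves implicit: that equipotence is preserved under composition and product, and that $\mcA_i\cdot\mcA_{k-i}$ must be read as product-species structures on $[k]$, so that the choice of splitting is counted.
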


In particular, from Proposition~\ref{prop: d_(k,m) interpretation} it is seen that the dominant term of asymptotics~\eqref{formula: species E_m asymptotics} is determined by $\mcB^{\{m-1\}}$-structures of minimal size,
that is, $\mcA$-structures with $(m-1)$ connected components whose size is minimal (compare with Corollary~\ref{cor: leading term of species E asymptotics}).

\subsection{Adaptation for linear orders and cycles}
\label{subsec: asymptotic theorem for L and C}

\begin{theorem}\label{theorem: species L asymptotics}
 Let $\mcF\in\{\mcL,\mcCP\}$, and let $\mcA$ be a gargantuan (weighted) species of structures that can be represented as $\mcA=\mcF\circ\mcB$ for some (weighted) species of structures~$\mcB$.
 Assume that $m,n\in\bbZ_{\geqslant0}$, and $s\in\mcA$ is a random $\mcA$-structure on $[n]$.
 In this case,
 \begin{equation}\label{formula: species L asymptotics}
  \bbP(s\in\mcF_m\circ\mcB)
   \approx 
  \sum\limits_{k\ge0}\d_{k,m}(\mcF)\cdot\binom{n}{k}\cdot\dfrac{\a_{n-k}}{\a_n},
 \end{equation}
 where $\a_n$, $\d_{n,m}(\mcL)$, and $\d_{n,m}(\mcC)$ are the total weights on the set $[n]$ of the species of structures $\mcA$, $m\mcB^{m-1}(1-\mcB)^2$, and $\mcB^{m-1}(1-\mcB)$, respectively.
\end{theorem}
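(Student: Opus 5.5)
We mirror the proof of Theorem~\ref{theorem: species E asymptotics}, replacing $\mcE$ by $\mcF\in\{\mcL,\mcCP\}$ and applying Theorem~\ref{theorem: Bender}. Since $\mcB_0=\Zero$, we have $\mcA_+=\mcF_+\circ\mcB$ when $\mcF=\mcL$ (because $\mcL_0=\One$). For $\mcF=\mcCP$ we have $\mcCP_0=\Zero$, so $\mcA=\mcCP\circ\mcB$ has no structure on the empty set. In that case we work with $\mcA$ itself, or with $\mcA_+=\mcA-\mcA_0$, whichever gives $\mcU_0=\Zero$.

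The substitution inverses have exponential generating series
\[
\mcL_+^{(-1)}(z)=\frac{z}{1+z},\qquad \mcCP^{(-1)}(z)=1-e^{-z}.
\]
This gives $\mcB=\mcL_+^{(-1)}\circ\mcA_+$ and $\mcB=\mcCP^{(-1)}\circ\mcA$, respectively. Consequently the species $\mcF_m\circ\mcB$ equals $F\circ\mcU$ with $F=\mcF_m\circ\mcF_+^{(-1)}$ (where $\mcF_+=\mcCP$ in the cyclic case). Both $\mcL_m(x/(1+x))=x^m/(1+x)^m$ and $\mcCP_m(1-e^{-x})=(1-e^{-x})^m/m$ are analytic near the origin. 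The sequence $u_n=\a_n/n!$ is gargantuan by hypothesis, so Theorem~\ref{theorem: Bender} yields
\[
[\mcF_m\circ\mcB]_n\approx\sum_{k\ge0}\binom{n}{k}\d_{k,m}(\mcF)\,\a_{n-k},
\]
where $\d_{k,m}(\mcF)$ is the total weight on $[k]$ of $F'\circ\mcU$. Dividing by $\a_n$ then gives~\eqref{formula: species L asymptotics}.

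It remains to identify $F'\circ\mcU$, using the chain rule and the rule $(\Psi^{(-1)})'=(\Psi')^{-1}\circ\Psi^{(-1)}$ recalled in Section~\ref{subsec: species}. This yields
\[
F'\circ\mcU=\big(\mcF_m'\circ\mcB\big)\cdot\big((\mcF_+')^{-1}\circ\mcB\big).
\]
For $\mcF=\mcL$, we have $(\mcL_m)'=m\mcL_{m-1}$ and $(\mcL_+)'=\mcL'=\mcL^2$, whose inverse is $(\One-\mcZ)^2$. Composing with $\mcB$ gives $m\mcB^{m-1}(\One-\mcB)^2$. For $\mcF=\mcCP$, we have $(\mcCP_m)'=\mcL_{m-1}$ and $\mcCP'=\mcL$, whose inverse is $\One-\mcZ$. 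This gives $\mcB^{m-1}(\One-\mcB)$, as claimed.

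The main obstacle is bookkeeping rather than depth. In the cyclic case we must check that $\mcCP$ satisfies the hypotheses of the substitution-inverse construction ($\mcCP_0=\Zero$, $\mcCP_1=\mcZ$), and we must handle the possible empty-set term of $\mcA$ consistently, so that $\mcU_0=\Zero$ as Bender's theorem requires. As a sanity check, the derivative identities can be verified directly at the level of generating series: $\frac{d}{dx}\big(x/(1+x)\big)^m=m\,x^{m-1}/(1+x)^{m+1}$, and this equals $m y^{m-1}(1-y)^2$ for $y=x/(1+x)$. The virtual-species version then follows from the cited properties of virtual species.
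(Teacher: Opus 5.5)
Your proposal is correct and is essentially the paper's proof: apply Theorem~\ref{theorem: Bender} with $\mcU=\mcA_+$ (which is $\mcA$ itself when $\mcF=\mcCP$) and $F=\mcF_m\circ\mcF_+^{(-1)}$, then identify $F'\circ\mcU$ via the chain rule and $(\Psi^{(-1)})'=(\Psi')^{-1}\circ\Psi^{(-1)}$, which is exactly what the paper's summary table records. Incidentally, your $F(x)=(1-e^{-x})^m/m$ for the cyclic case is the right one; the paper's table writes $(1-e^{x})^m/m$, a sign slip that does not affect the resulting $\mcD(m)=\mcB^{m-1}(\One-\mcB)$.
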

\begin{proof}
 The idea of the proof is essentially the same as in Theorem~\ref{theorem: species E asymptotics}: we apply Theorem~\ref{theorem: Bender} to the exponential generating series of the species $\mcU = \mcA_+$ and another species~$F$ taken so that $F(U(z))$ is the exponential generating series of the species $\mcV=\mcF_m\circ\mcB$.
 The key steps, along with the comparison with the corresponding steps of the proof of Theorem~\ref{theorem: species E asymptotics}, are reflected in Table~\ref{table: species asymptotics proof}.
 For further details, we refer the reader to~\cite{Nurligareev2022}.
 \begin{table}[ht!]
  \centering
  \begin{tabular}{|c||c|c|c|c|}
   \hline
    $\mcF$ & $\mcA_0$ & $F$ & $F(x)$ & $F'$ \\
   \hline
    $\mcE$ & $\One$ & $\mcE_m\circ\mcE_+^{-1}$ & $\dfrac{\log^m(1+x)}{m!}$ & $\big(\mcE_{m-1}\circ\mcE_+^{(-1)}\big)\cdot\big(\mcE^{-1}\circ\mcE_+^{(-1)}\big)$ \\
    $\mcL$ & $\One$ & $\mcL_m\circ\mcL_+^{-1}$ & $\left(\dfrac{x}{1+x}\right)^m$ & $\big(m\mcL_{m-1}\circ\mcL_+^{(-1)}\big)\cdot\big(\mcL^{-2}\circ\mcL_+^{(-1)}\big)$ \\ 
    $\mcCP$ & $\Zero$ & $\mcCP_m\circ\mcCP^{(-1)}$ & $\dfrac{(1-e^{x})^m}{m}$ & $\big(\mcL_{m-1}\circ\mcCP^{(-1)}\big)\cdot\big(\mcL^{-1}\circ\mcCP^{(-1)}\big)$ \\
   \hline
  \end{tabular}
  \caption{Proof key steps summary.}
  \label{table: species asymptotics proof}
 \end{table}
\end{proof}

\begin{corollary}\label{cor: leading term of species L asymptotics}
 If $\a_1\ne0$, then the leading term of asymptotic expansion~\eqref{formula: species L asymptotics} satisfies
 \begin{equation}\label{formula: leading term of species asymptotics}
  \bbP(s\in\mcF_m\circ\mcB) 
   =
  c(\mcF)\cdot(n)_{m-1}\cdot\dfrac{\a_1^{m-1}\a_{n-m+1}}{\a_n}
   +
  O\left(n^m\cdot\dfrac{\a_{n-m}}{\a_n}\right),
 \end{equation}
 where $(n)_{m-1} = n(n-1)\ldots(n-m+2)$ are the falling factorials, $c(\mcL)=m$, and $c(\mcCP)=1$.
\end{corollary}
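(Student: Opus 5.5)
The plan is to read off the leading term of the expansion in Theorem~\ref{theorem: species L asymptotics}, the same way Corollary~\ref{cor: leading term of species E asymptotics} was derived from Theorem~\ref{theorem: species E asymptotics}. Fix $m\ge1$. The expansion~\eqref{formula: species L asymptotics} has coefficients $\d_{k,m}(\mcF)$, the total weights on $[k]$ of $\mcD_\mcL(m)=m\mcB^{m-1}(1-\mcB)^2$ or $\mcD_\mcCP(m)=\mcB^{m-1}(1-\mcB)$.

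The first step is to find the first nonzero coefficient. Since $\mcB_0=\Zero$, the generating series satisfies $\mcB(z)=\b_1 z+O(z^2)$. From $\mcA=\mcF\circ\mcB$ we get $\a_1=\b_1$ in both cases, because $\mcL_1=\mcCP_1=\mcZ$. Hence
\[
\mcB(z)^{m-1}=\a_1^{m-1}z^{m-1}+O(z^m),
\]
and the factors $(1-\mcB(z))^2$ and $(1-\mcB(z))$ both equal $1+O(z)$. So the generating series of $\mcD_\mcF(m)$ is $c(\mcF)\,\a_1^{m-1}z^{m-1}+O(z^m)$, with $c(\mcL)=m$ and $c(\mcCP)=1$. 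All coefficients with $k<m-1$ vanish. Since the series is exponential, the total weight is
\[
\d_{m-1,m}(\mcF)=(m-1)!\,c(\mcF)\,\a_1^{m-1}\neq0 .
\]

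The second step is to convert this term. Using $\binom{n}{m-1}(m-1)!=(n)_{m-1}$, the term $k=m-1$ of~\eqref{formula: species L asymptotics} becomes $c(\mcF)(n)_{m-1}\a_1^{m-1}\a_{n-m+1}/\a_n$.

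The third step bounds the remainder. By Notation~\ref{notation: approx} with $r=m-1$, the rest is $O$ of the term $k=m$, namely $O\big(\d_{m,m}\binom{n}{m}\a_{n-m}/\a_n\big)=O(n^m\a_{n-m}/\a_n)$. I expect this to be the main obstacle. If $\d_{m,m}=0$, the next nonzero term has index $k>m$, and I need to check it is still $O(n^m\a_{n-m}/\a_n)$. This follows because gargantuanness gives $n^{j}\a_{n-k}=o(\a_{n-k+j})$-type bounds. Concretely, condition (i) of Definition~\ref{def: gargantuan species} for $a_n=\a_n/n!$ gives $n\,\a_{n-1}/\a_n\to0$. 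This makes each later term $\binom{n}{k}\a_{n-k}/\a_n$ smaller than the previous one, so any truncation error is dominated by $n^m\a_{n-m}/\a_n$.

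One subtlety needs care: Theorem~\ref{theorem: species L asymptotics} is stated for the total weights $\d_{k,m}$ as listed. I will take the formula as given and not recheck the table.
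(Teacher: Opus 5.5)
Your proposal is correct and follows the same route the paper uses for the analogous Corollary~\ref{cor: leading term of species E asymptotics}; the paper states the $\mcL$/$\mcCP$ version without a separate proof. That route is to read off the first nonzero coefficient $c(\mcF)\,\a_1^{m-1}z^{m-1}$ of the generating series of $\mcD(m)$, convert it via $(m-1)!\binom{n}{m-1}=(n)_{m-1}$, and bound the rest. Your extra check of the remainder when $\d_{m,m}=0$, using condition (i) of the gargantuan definition, is sound and slightly more careful than the paper, which does not address that case.
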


\begin{remark}\label{rem: asymptotic theorem for virtual species}
 Theorems~\ref{theorem: species E asymptotics} and~\ref{theorem: species L asymptotics} can be applied even if the species $\mcA$ is virtual.
 In this case, it is no longer possible to discuss probabilities,
 so the left side of asymptotic relations~\eqref{formula: species E asymptotics} and~\eqref{formula: species L asymptotics} should be replaced by the quotient $\b_n(\mcF)/\a_n$,
 where $\a_n$ and $\b_n(\mcF)$ are the total weights on the set $[n]$ of, respectively, the species $\mcA$ and $\mcF_m\circ\mcB$ with $\mcF\in\{\mcE,\mcL,\mcCP\}$.
\end{remark}

\subsection{Adaptation for p-periodic sequences}
\label{subsec: p-periodic sequences}

In the settings of Theorems~\ref{theorem: species E asymptotics} and~\ref{theorem: species L asymptotics}, the sequence $(a_n)$ of the total weights of the species~$\mcA$ is allowed to have finitely many zeros.
In practice, however, it happens that $(a_n)$ contains an infinite number of zeros, as, for example, in the case of perfect matchings or triangulated surfaces.
Formally, the above theorems cannot be applied in these circumstances.
Nevertheless, the underlying asymptotic approach can still be used due to the regularity of the zero distribution guaranteed by the other theorem conditions.
More precisely, it can be shown~\cite{Wright1967} that if two species of structures $\mcA$ and $\mcB$ satisfy $\mcA=\mcF \circ \mcB$ where $\mcF\in\{\mcE, \mcL, \mcCP\}$, 
then the sequence $(\a_n)$ is \emph{$p$-periodic} for some positive integer $p$, \emph{i.e.} 
\begin{itemize}
 \item
  $\a_n\neq0$ for $n=pk$, where $k$ is sufficiently large,
 \item
  $\a_n=0$ in any other case.
\end{itemize}
Thus, for species whose sequences of total weight are $p$-periodic, Theorems~\ref{theorem: species E asymptotics} and~\ref{theorem: species L asymptotics} can be adapted in the following way.

\begin{proposition}\label{prop: p-periodic species asymptotics}
 Let $\mcA$, $\mcB$, and $\mcF$ be three (weighted) species of structures that satisfy $\mcA = \mcF \circ \mcB$.
 In addition, let the sequence $(\a_n)$ of the total weights of the species of structures $\mcA$ be $p$-periodic for some positive integer $p$,
 and let the sequence $\big(\a_{pn}/(pn)!\big)$ be gargantuan.
 Suppose that $m,n\in\mathbb{Z}_{\geqslant0}$ and $s\in\mcA$ is a random $\mcA$-structure on $[n]$.
 In this case,
 \begin{equation}\label{formula: p-periodic species asymptotics}
  \bbP(s\in\mcF_m\circ\mcB)
   \approx 
  \sum\limits_{k\ge0}\d_{pk,m}\cdot\binom{pn}{pk}\cdot\dfrac{\a_{p(n-k)}}{\a_{pn}},
 \end{equation}
 where the species of structures $\mcD = \mcD(m)$ depends on $\mcF$ according to Table~\ref{table: species asymptotics},
 and the numbers $\d_{n,m}$ are the total weights on $[n]$ of the species $\mcD$.
 \begin{table}[ht!]
  \centering
  \begin{tabular}{|c||c|c|c|}
   \hline
    $\mcF$ & $\mcF_m$ & $\mcD(m)$ & $\mcD(1)$ \\
   \hline
    $\mcE$ & $\mcE_m$ & $\mcB^{\{m-1\}}(\mcE^{-1}\circ\mcB)$ & $\mcE^{-1}\circ\mcB$ \\
    $\mcL$ & $\mcL_m$ & $m\mcB^{m-1}(\One-\mcB)^2$ & $(\One-\mcB)^2$ \\ 
    $\mcCP$ & $\mcCP_m$ & $\mcB^{m-1}(\One-\mcB)$ & $\One-\mcB$ \\
   \hline
  \end{tabular}
  \caption{Correspondence between $\mcF$ and $\mcD$.}
  \label{table: species asymptotics}
 \end{table}
\end{proposition}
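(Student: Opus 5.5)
The plan is to reduce to Bender's theorem (Theorem~\ref{theorem: Bender}) after a change of variable that removes the periodic zeros, and then to reuse the computation of $F'$ from Table~\ref{table: species asymptotics proof}. The key observation is that $p$-periodicity of $(\a_n)$ forces the exponential generating series of $\mcA_+$ to be a power series in $z^p$. Write
\[
 \mcA_+(z) = U(z^p), \qquad U(y) = \sum_{n\ge1} u_n y^n, \qquad u_n = \frac{\a_{pn}}{(pn)!},
\]
so that $(u_n)$ is gargantuan by hypothesis. The finitely many small indices $n$ with $pn$ nonzero but $\a_{pn}=0$ are allowed, since Bender's theorem only needs $u_n\neq0$ eventually.

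Next, take the same analytic function $F$ as in the proofs of Theorems~\ref{theorem: species E asymptotics} and~\ref{theorem: species L asymptotics}: $\log^m(1+x)/m!$, $\big(x/(1+x)\big)^m$, or $(1-e^{x})^m/m$, depending on $\mcF$. For the cycle case one must write $\mcA=\mcA_+$ with $\mcA_0=\Zero$ and use $F=\mcCP_m\circ\mcCP^{(-1)}$ as in the table.

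Then $F\big(U(y)\big)$ evaluated at $y=z^p$ is the generating series of $\mcV=\mcF_m\circ\mcB$. The series $W(y)=F'\big(U(y)\big)$ evaluated at $y=z^p$ is the generating series of $\mcD(m)=F'\circ\mcA_+$, and the table computations identify this with the species in Table~\ref{table: species asymptotics}. In particular, both series involve only powers of $z^p$, so the coefficients of $\mcV$ and $\mcD(m)$ vanish off multiples of $p$. Hence $w_k=\d_{pk,m}/(pk)!$ and $v_n=\b_{pn}/(pn)!$.

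Applying Theorem~\ref{theorem: Bender} in the variable $y$ gives
\[
 \frac{\b_{pn}}{(pn)!} \approx \sum_{k\ge0} \frac{\d_{pk,m}}{(pk)!}\cdot\frac{\a_{p(n-k)}}{(p(n-k))!}.
\]
Multiplying by $(pn)!$ and dividing by $\a_{pn}$ yields the stated expansion, with $\binom{pn}{pk}$ appearing as $(pn)!/\big((pk)!\,(p(n-k))!\big)$.

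The one point needing care is the notion of $\approx$. Terms with $\d_{pk,m}=0$ must be discarded, and the expansion should be interpreted along the remaining $k$. This matches how the nonperiodic theorems already treat Bender's output, so no new estimate is needed. The only real check is that $F$ is analytic at $0$ and that $U$ has no constant term, which is exactly why the species is split as $\mcA=\One+\mcA_+$ (or $\mcA_0=\Zero$ for cycles).
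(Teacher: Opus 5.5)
Your proposal is correct and is essentially the paper's proof. The paper also applies Bender's theorem to $U(z)=\mcA_+(z^{1/p})$, which is your $U(y)$ with $y=z^p$, together with the function $F$ from Table~\ref{table: species asymptotics proof}; you have simply written out the coefficient bookkeeping that the paper compresses into ``the same reasoning provides the desired result.''

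One small slip, inherited from that table: the explicit cycle function should be $(1-e^{-x})^m/m$, since $\mcCP^{(-1)}(x)=1-e^{-x}$. This is what your species-level description $F=\mcCP_m\circ\mcCP^{(-1)}$ actually gives.
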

\begin{proof}
 Here, we apply Theorem~\ref{theorem: Bender} to the formal power series
 \[
  U(z) = \mcA_+(z^{1/p})
  =
  \sum\limits_{n=0}^{\infty} \a_{np} \dfrac{z^n}{(np)!}
 \]
 and the analytic function $F(x)$ indicated in Table~\ref{table: species asymptotics proof}.
 Similarly to the proof of Theorem~\ref{theorem: species E asymptotics}, we ensure that the conditions of Theorem~\ref{theorem: Bender} hold and the same reasoning provides the desired result.
\end{proof}

\begin{corollary}\label{cor: leading term of p-periodic species asymptotics}
 If $\a_p\ne0$, then the leading term of asymptotic expansion~\eqref{formula: p-periodic species asymptotics} satisfies
 \begin{equation}\label{formula: leading term of p-periodic species asymptotics}
  \bbP(s\in\mcF_m\circ\mcB) 
   =
  c(\mcF) \cdot \dfrac{(pn)_{p(m-1)}}{(p!)^{m-1}} \cdot \dfrac{\a_p^{m-1}\a_{p(n-m+1)}}{\a_{pn}}
   +
  O\left(n^{pm}\cdot\dfrac{\a_{p(n-m)}}{\a_{pn}}\right).
 \end{equation}
 where $(pn)_{p(m-1)} = pn(pn-1)\ldots\big(p(n-m+1)+1\big)$ are the falling factorials.
 In particular, for $\mcF\in\{\mcE,\mcL,\mcCP\}$ the constants are $c(\mcE)=1/(m-1)!$, $c(\mcL)=m$, and $c(\mcCP)=1$.
\end{corollary}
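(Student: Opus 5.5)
The plan is to read the leading term off the expansion~\eqref{formula: p-periodic species asymptotics} of Proposition~\ref{prop: p-periodic species asymptotics}, in the same way as Corollary~\ref{cor: leading term of species E asymptotics}. The leading term is the summand with the smallest $k$ for which $\d_{pk,m}\neq0$. Every later summand is $O$ of the next one, by the definition of $\approx$ in Notation~\ref{notation: approx}. So the whole task is to locate the first nonzero coefficient of the exponential generating series of $\mcD(m)$ from Table~\ref{table: species asymptotics} and compute it.

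First I relate the low-order coefficients of $\mcB$ to those of $\mcA$. Since $(\a_n)$ is $p$-periodic, $\a_n=0$ for $0<n<p$. For each $\mcF\in\{\mcE,\mcL,\mcCP\}$ we have $\mcF_1=\mcZ$, and $\mcF_j\circ\mcB$ with $j\ge2$ only contributes in sizes at least twice the minimal size of $\mcB$. From these facts I expect to derive
\[
 \mcB(z)=\a_p\dfrac{z^p}{p!}+O(z^{p+1}).
\]
By induction on $n$, this says $\b_n=0$ for $0<n<p$ and $\b_p=\a_p$. I will also need that $\mcB(z)$ contains only powers $z^{pj}$, so that $\mcD(m)(z)$ is a series in $z^p$ and only the indices $pk$ matter. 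This should follow from the same periodicity (Wright's result quoted before the proposition), or directly by inverting $\mcF$ on the series of $\mcA_+$, which lies in $z^p$.

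Next I expand each $\mcD(m)$ using this. For $\mcE$, $\mcD(m)=\mcB^{\{m-1\}}\cdot(\mcE^{-1}\circ\mcB)$, and $(\mcE^{-1}\circ\mcB)(z)=1+O(z^p)$. Hence
\[
 \mcD(m)(z)=\dfrac{\a_p^{m-1}}{(m-1)!\,(p!)^{m-1}}\,z^{p(m-1)}+O(z^{pm}).
\]
For $\mcL$, $m\mcB^{m-1}(\One-\mcB)^2$ gives the same leading power with coefficient $m\,\a_p^{m-1}/(p!)^{m-1}$. For $\mcCP$, $\mcB^{m-1}(\One-\mcB)$ gives coefficient $\a_p^{m-1}/(p!)^{m-1}$. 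Consequently $\d_{pk,m}=0$ for $k<m-1$, and
\[
 \d_{p(m-1),m}=(p(m-1))!\cdot c(\mcF)\,\dfrac{\a_p^{m-1}}{(p!)^{m-1}},
\]
with $c(\mcE)=1/(m-1)!$, $c(\mcL)=m$, $c(\mcCP)=1$.

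Finally I substitute into~\eqref{formula: p-periodic species asymptotics}. Multiplying by $\binom{pn}{p(m-1)}$ cancels $(p(m-1))!$ and leaves the falling factorial $(pn)_{p(m-1)}$, which yields the main term of~\eqref{formula: leading term of p-periodic species asymptotics}. By the definition of $\approx$, the remainder is $O$ of the $k=m$ term, namely $\d_{pm,m}\binom{pn}{pm}\a_{p(n-m)}/\a_{pn}$. Since $\d_{pm,m}$ is a constant and $\binom{pn}{pm}=O(n^{pm})$, this gives the stated error term.

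The only delicate point is the first step: justifying $\b_p=\a_p$ and that $\mcB$, and hence $\mcD(m)$, lives only in degrees divisible by $p$. This must be uniform over the three choices of $\mcF$, and for $\mcCP$ the constant term $\mcA_0=\Zero$ has to be handled. I would argue it by induction on the coefficients of $\mcB=\mcF_+^{(-1)}\circ\mcA_+$, respectively $\mcCP^{(-1)}\circ\mcA$. The rest is bookkeeping of leading coefficients.
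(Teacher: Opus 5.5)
Your proposal is correct and follows the paper's proof. You read off the first nonzero coefficient of $\mcD(m)(z)$ from $\mcB(z)=\a_p z^p/p!+\ldots$, obtaining $\d_{p(m-1),m}=c(\mcF)\,\a_p^{m-1}\big(p(m-1)\big)!/(p!)^{m-1}$, and multiply by $\binom{pn}{p(m-1)}$ to produce the falling factorial. The only difference is that you explicitly justify, by inverting $\mcF$ on the series of $\mcA_+$ (or $\mcA$ for $\mcCP$), that $\mcB(z)$ is a series in $z^p$ with leading term $\a_p z^p/p!$, whereas the paper simply asserts this from the $p$-periodicity of $(\a_n)$.
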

\begin{proof}
 Since the sequence $(\a_n)$ is $p$-periodic, we have
 \[
  \mcB^{m-1}(z)
   = 
  \left(\a_p\dfrac{z^p}{p!} + \ldots\right)^{m-1}
   =
  \dfrac{\a_p^{m-1}}{(p!)^{m-1}} \cdot \big(p(m-1)\big)! \cdot \dfrac{z^{p(m-1)}}{\big(p(m-1)\big)!} + O(z^{pm}).
 \]
 Therefore, the first nonzero element of the sequence $(\d_{pk,m})$ corresponds to $k=m-1$ and equals
 \[
  \d_{p(m-1),m} = c(\mcF) \cdot \dfrac{\a_p^{m-1}}{(p!)^{m-1}} \cdot \big(p(m-1)\big)!.
 \]
 To complete the proof, it suffices to note that
 \[
  \d_{p(m-1),m}\cdot\binom{pn}{p(m-1)}\cdot\dfrac{\a_{p(n-m+1)}}{\a_{pn}}
   =
  c(\mcF) \cdot \dfrac{(pn)_{p(m-1)}}{(p!)^{m-1}} \cdot \dfrac{\a_p^{m-1}\a_{p(n-m+1)}}{\a_{pn}}.
 \]
\end{proof}

\section{Applications}
\label{sec: applications}

Theorems~\ref{theorem: species E asymptotics}, \ref{theorem: species L asymptotics} and Propositions~\ref{prop: d_(k,1) interpretation},~\ref{prop: d_(k,m) interpretation}~\ref{prop: p-periodic species asymptotics} generalize the asymptotic results obtained for labeled combinatorial classes in~\cite{MonteilNurligareevSET,MonteilNurligareevSEQ}.
In this section, we focus on the applications that cannot be treated using combinatorial class approach.
The first of them concerns the Erd\H{o}s-R\'enyi graph model, while the second refers to various models of surfaces and manifolds.

\mathversion{bold}
\subsection{The Erd\H{o}s-R\'enyi $G(n,p)$ model}
\mathversion{normal}
\label{subsec: Erdos-Renyi}

Let $p\in(0,1)$ be a fixed real number, and let $n\in\bbZ_{\geqslant0}$.
Here we consider the \emph{Erd\H{o}s--Rényi model} $G(n,p)$ of a random labeled graph:
its set of vertices is $[n]$, and each pair of vertices is joined by an edge independently with probability $p$.
Thus, a particular graph with $n$~vertices and $k$~edges is picked with probability
 \[
  p^{k}q^{\binom{n}{2}-k},
 \]
where $q=1-p$.

Take a random graph $g$ in the Erd\H{o}s--Rényi model $G(n,p)$.
Our goal is to provide and interpret the asymptotic probability that this graph is connected as $p$ is fixed and $n$~tends to infinity.
To this end, we introduce a weighted species of graphs compatible with the model and employ our asymptotic results described in Section~\ref{sec: main result}.

\subsubsection{Weighted species of graphs}

Let, as before, $p\in(0,1)$, and $q=1-p$.
We also define an additional parameter $\rho$ as their quotient,
 \[
  \rho = \dfrac{p}{q} = \dfrac{1}{q} - 1,
 \]
and consider the \emph{weighted species $\mcG$ of graphs}, that is, the species $\mcG$ of graphs with the weight assigned to a graph $g$ to be
 \[
  w(g) = \rho^{|E(g)|},
 \]
where $|E(g)|$ is the number of edges in the graph $g$.
For example, the weights of all graphs of size at most 3 are indicated in Fig.~\ref{figure: small graph weights}.

\begin{figure}[!ht]
\begin{center}
\begin{tikzpicture}[>=triangle 45, line width=.5pt]
 \begin{scope}[xshift=-6.5cm,yshift=1.2cm]
  \coordinate [label=90:$1$] (a1) at (0,0);
  \draw (0,-15pt) node {$w=1$};
  \filldraw [black] (a1) circle (1.5pt);
 \end{scope}
 \begin{scope}[xshift=-3.5cm]
  \coordinate (a1) at (0,10pt);
  \coordinate [label=180:$1$] (a2) at ([rotate = 120] a1);
  \coordinate [label=0:$2$] (a3) at ([rotate = 240] a1);
  \draw (0,-25pt) node {$w=\rho$};
  \draw (a2) -- (a3);
  \foreach \p in {a2,a3} \filldraw [black] (\p) circle (1.5pt);
 \end{scope}
 \begin{scope}[xshift=-3.5cm,yshift=2.5cm]
  \coordinate (a1) at (0,10pt);
  \coordinate [label=180:$1$] (a2) at ([rotate = 120] a1);
  \coordinate [label=0:$2$] (a3) at ([rotate = 240] a1);
  \draw (0,-25pt) node {$w=1$};
  \foreach \p in {a2,a3} \filldraw [black] (\p) circle (1.5pt);
 \end{scope}
 \begin{scope}
  \coordinate [label=90:$1$] (a1) at (0,10pt);
  \coordinate [label=180:$2$] (a2) at ([rotate = 120] a1);
  \coordinate [label=0:$3$] (a3) at ([rotate = 240] a1);
  \draw (0,-25pt) node {$w=\rho^3$};
  \draw (a1) -- (a2) -- (a3) -- (a1);
  \foreach \p in {a1,a2,a3} \filldraw [black] (\p) circle (1.5pt);
 \end{scope}
 \begin{scope}[xshift=2.5cm]
  \coordinate [label=90:$1$] (a1) at (0,10pt);
  \coordinate [label=180:$2$] (a2) at ([rotate = 120] a1);
  \coordinate [label=0:$3$] (a3) at ([rotate = 240] a1);
  \draw (0,-25pt) node {$w=\rho^2$};
  \draw (a2) -- (a3) -- (a1);
  \foreach \p in {a1,a2,a3} \filldraw [black] (\p) circle (1.5pt);
 \end{scope}
 \begin{scope}[xshift=5cm]
  \coordinate [label=90:$1$] (a1) at (0,10pt);
  \coordinate [label=180:$2$] (a2) at ([rotate = 120] a1);
  \coordinate [label=0:$3$] (a3) at ([rotate = 240] a1);
  \draw (0,-25pt) node {$w=\rho^2$};
  \draw (a2) -- (a1) -- (a3);
  \foreach \p in {a1,a2,a3} \filldraw [black] (\p) circle (1.5pt);
 \end{scope}
 \begin{scope}[xshift=7.5cm]
  \coordinate [label=90:$1$] (a1) at (0,10pt);
  \coordinate [label=180:$2$] (a2) at ([rotate = 120] a1);
  \coordinate [label=0:$3$] (a3) at ([rotate = 240] a1);
  \draw (0,-25pt) node {$w=\rho^2$};
  \draw (a1) -- (a2) -- (a3);
  \foreach \p in {a1,a2,a3} \filldraw [black] (\p) circle (1.5pt);
 \end{scope}
 \begin{scope}[xshift=0cm, yshift=2.5cm]
  \coordinate [label=90:$1$] (a1) at (0,10pt);
  \coordinate [label=180:$2$] (a2) at ([rotate = 120] a1);
  \coordinate [label=0:$3$] (a3) at ([rotate = 240] a1);
  \draw (0,-25pt) node {$w=1$};
  \foreach \p in {a1,a2,a3} \filldraw [black] (\p) circle (1.5pt);
 \end{scope}
 \begin{scope}[xshift=2.5cm, yshift=2.5cm]
  \coordinate [label=90:$1$] (a1) at (0,10pt);
  \coordinate [label=180:$2$] (a2) at ([rotate = 120] a1);
  \coordinate [label=0:$3$] (a3) at ([rotate = 240] a1);
  \draw (0,-25pt) node {$w=\rho$};
  \draw (a1) -- (a2);
  \foreach \p in {a1,a2,a3} \filldraw [black] (\p) circle (1.5pt);
 \end{scope}
 \begin{scope}[xshift=5cm, yshift=2.5cm]
  \coordinate [label=90:$1$] (a1) at (0,10pt);
  \coordinate [label=180:$2$] (a2) at ([rotate = 120] a1);
  \coordinate [label=0:$3$] (a3) at ([rotate = 240] a1);
  \draw (0,-25pt) node {$w=\rho$};
  \draw (a3) -- (a2);
  \foreach \p in {a1,a2,a3} \filldraw [black] (\p) circle (1.5pt);
 \end{scope}
 \begin{scope}[xshift=7.5cm, yshift=2.5cm]
  \coordinate [label=90:$1$] (a1) at (0,10pt);
  \coordinate [label=180:$2$] (a2) at ([rotate = 120] a1);
  \coordinate [label=0:$3$] (a3) at ([rotate = 240] a1);
  \draw (0,-25pt) node {$w=\rho$};
  \draw (a1) -- (a3);
  \foreach \p in {a1,a2,a3} \filldraw [black] (\p) circle (1.5pt);
 \end{scope}
\end{tikzpicture}
\end{center}
\caption{Weights of the labeled graphs whose size does not exceed $3$.}\label{figure: small graph weights}
\end{figure}

The following two facts will be useful for our investigation.
First, the species $\mcG$ satisfies the relation
 \begin{equation}\label{formula: graph decomposition}
  \mcG = \mcE \circ\mcCG,
 \end{equation}
where $\mcCG\subset\mcG$ is the subspecies of \emph{connected graphs}.
Second, the total weight of the species of graphs on $[n]$ is $(\rho+1)^{\binom{n}{2}} = q^{-\binom{n}{2}}$.

We will also need the following lemma.

\begin{lemma}\label{lemma: Erdos-Renyi is gargantuan}
  The weighted species $\mcG$ of graphs is gargantuan.
\end{lemma}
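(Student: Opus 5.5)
We must show that the sequence $a_n = q^{-\binom{n}{2}}/n!$ is gargantuan, where $q=1-p\in(0,1)$. We will verify the sufficient conditions (i)' and (ii)' of Lemma~\ref{lemma: sufficient conditions for gargantuan sequence}.

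\emph{Condition (i)'.} We compute the ratio
\[
 \frac{n a_{n-1}}{a_n} = \frac{n\cdot n!}{(n-1)!}\, q^{\binom{n}{2}-\binom{n-1}{2}} = n^2 q^{n-1}.
\]
Since $0<q<1$, this tends to $0$, so $n a_{n-1} = O(a_n)$. The same computation also gives condition (i) of Definition~\ref{def: gargantuan species} directly.

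\emph{Condition (ii)'.} Set $x_k = a_k a_{n-k}$; all terms are positive. It suffices to show that $x_{k+1}/x_k\le 1$ for $0\le k<n/2$ with $k+1\le n/2$ (or the appropriate range), for all large $n$. We have
\[
 \frac{x_{k+1}}{x_k}
 = \frac{a_{k+1}}{a_k}\cdot\frac{a_{n-k-1}}{a_{n-k}}
 = \frac{q^{-k}}{k+1}\cdot (n-k)\,q^{\,n-k-1}
 = \frac{n-k}{k+1}\, q^{\,n-2k-1}.
\]
This quantity must be bounded by $1$ uniformly over the range $k<n/2$, and this is the only delicate step. Write $j = n-2k-1\ge 0$, so that $n-k = k+1+j$. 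The ratio becomes $(1+\tfrac{j}{k+1})q^j \le (1+j)q^j$.

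\emph{Main obstacle.} The bound $(1+j)q^j \le 1$ fails for small $j\ge1$ whenever $q>1/2$. So plain monotonicity up to $n/2$ may fail near the middle; indeed for $j=1$ the ratio is about $q(1+1/(k+1))$, which can exceed $1$ only when $k$ is small, which is impossible since $j=1$ forces $k\approx n/2$. I would therefore handle two ranges separately:
\begin{itemize}
 \item For $k\ge \varepsilon n$, the factor $\tfrac{n-k}{k+1}$ is at most about $1/\varepsilon$. With $j$ small it is close to $1$, giving a ratio $\le (1+j/(k+1))q^j<1$ for large $n$.
 \item For small $k$, $j$ is of order $n$, so $q^{j}$ kills the polynomial factor $n$.
\end{itemize}
Precisely, $(1+\tfrac{j}{k+1})q^j\le e^{j/(k+1)}q^j = \big(e^{1/(k+1)}q\big)^j$. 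This is $\le1$ once $k+1\ge 1/\log(1/q)$. For the finitely many smaller $k$, the bound $n q^{n-2k-1}\to0$ holds for large $n$.

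If the endpoint cases still cause trouble, I would instead verify condition (ii) of Definition~\ref{def: gargantuan species} directly. The idea is to bound $\sum_{k=r}^{n-r} x_k$ by $2x_r$ times a geometric series, using $x_{k+1}/x_k\le C q^{n-2k-1}$ for $k$ below the middle, and handling the middle terms crudely, since there $x_k/x_r$ is exponentially small in $n^2$.
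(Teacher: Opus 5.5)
Your proof is correct and follows the paper's route: you verify conditions (i)' and (ii)' of Lemma~\ref{lemma: sufficient conditions for gargantuan sequence} by computing $x_{k+1}/x_k=\frac{n-k}{k+1}\,q^{\,n-2k-1}$. The paper finishes with the monotonicity of $(\rho+1)^x/x$ for large $x$, which is the same estimate as your threshold $e^{1/(k+1)}q\le 1$; your separate handling of the finitely many small $k$ spells out what the paper's ``for large $n$'' leaves implicit, and your ``Precisely'' argument already covers every $k\le (n-1)/2$, so the hedging before it and the fallback after it are unnecessary.
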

\begin{proof}
 According to Definition~\ref{def: gargantuan species}, our goal is to show that the sequence
 \[
  a_n = \dfrac{\big|\mcG[n]\big|}{n!} = \dfrac{(\rho+1)^{\binom{n}{2}}}{n!}
 \]
 is gargantuan.
 The proof idea is to apply Lemma~\ref{lemma: sufficient conditions for gargantuan sequence} by checking its two conditions.
 The first of them holds, since
 \[
  \dfrac{na_{n-1}}{a_n} =
  \dfrac{n}{(\rho+1)^n} \to 0,
 \]
 as $n\to\infty$.
 In order to verify the second one, let us show that the sequence
 \[
  x_k = a_ka_{n-k}
 \]
 decreases for $k < n/2$.
 For large $n$, we have
 \[
  \dfrac{x_{k+1}}{x_k} \le 1
  \quad\Leftrightarrow\quad
  \dfrac{(\rho+1)^{k+1}}{(k+1)} \le \dfrac{(\rho+1)^{n-k}}{(n-k)}
  \quad\Leftrightarrow\quad
  {k+1} \le {n-k},
 \]
 since the function
 \[
  f(x) = \dfrac{(\rho+1)^x}{x}
 \]
 is increasing for large $x$.
 Therefore, Lemma~\ref{lemma: sufficient conditions for gargantuan sequence} is applicable and $\mcG$ is gargantuan.
\end{proof}

\subsubsection{Asymptotic probability of connected graphs}

\begin{theorem}\label{theorem: Erdos-Renyi asymptotics}
  Let $m$ be a fixed positive integer, and $p\in(0,1)$.
 The asymptotic probability that a random graph~$g$ in the Erd\H{o}s--Rényi model~$G(n,p)$ has $m$ connected components satisfies 
 \begin{equation}\label{formula: Erdos-Renyi SET_m-asymptotics}
  \bbP(g\mbox{ has }m\mbox{ connected components})
   \approx
  \sum\limits_{k\ge0} P_{k,m}(\rho) \cdot \binom{n}{k} \cdot \dfrac{q^{nk}}{q^{k(k+1)/2}},
 \end{equation}
 where
 \[
  P_{k,m}(\rho) = \sum\limits_{g'\in\mcG_k}(-1)^{\pi_0(g')-(m-1)}\binom{\pi_0(g')}{m-1}w(g')
 \]
 and $\pi_0(g')$ is the number of connected components of the graph $g'$.
\end{theorem}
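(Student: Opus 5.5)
The plan is to apply Theorem~\ref{theorem: species E asymptotics} to the weighted species $\mcA=\mcG$ with $\mcB=\mcCG$, using decomposition~\eqref{formula: graph decomposition}. By Lemma~\ref{lemma: Erdos-Renyi is gargantuan}, $\mcG$ is gargantuan, so the theorem applies. First we must check that the probability in the Erd\H{o}s--Rényi model is the weighted probability on $\mcG[n]$. A graph with $\ell$ edges has probability $p^\ell q^{\binom n2-\ell}=\rho^\ell q^{\binom n2}=w(g)/\big|\mcG[n]\big|_w$, because $\big|\mcG[n]\big|_w=q^{-\binom n2}$. Hence the event that $g$ has $m$ connected components is exactly $g\in\mcCG^{\{m\}}$, and
\[
 \bbP(g\mbox{ has }m\mbox{ connected components})\approx\sum_{k\ge0}\d_{k,m}\binom nk\frac{\a_{n-k}}{\a_n}.
\]

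Next we simplify the ratio. Since $\a_n=q^{-\binom n2}$, we get $\a_{n-k}/\a_n=q^{\binom n2-\binom{n-k}2}$. Expanding gives $\binom n2-\binom{n-k}2=nk-k(k+1)/2$, so the ratio equals $q^{nk}/q^{k(k+1)/2}$, matching \eqref{formula: Erdos-Renyi SET_m-asymptotics}.

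The main step is to identify $\d_{k,m}$, the total weight on $[k]$ of $\mcD(m)=\mcCG^{\{m-1\}}\cdot(\mcE^{-1}\circ\mcCG)$, with $P_{k,m}(\rho)$. By Lemma~\ref{lemma: E^(-1) = sum((-1)^(k)E_k)}, $\mcE^{-1}\circ\mcCG$ has the same total weights as $\sum_j(-1)^j\mcCG^{\{j\}}$. Equality of exponential generating series persists after composition with $\mcCG$ and multiplication, since these operations act on generating series, and weights are multiplicative. So
\[
 \d_{k,m}=\sum_{j\ge0}(-1)^j\,\big|(\mcCG^{\{m-1\}}\cdot\mcCG^{\{j\}})[k]\big|_w .
\]
A structure of $\mcCG^{\{m-1\}}\cdot\mcCG^{\{j\}}$ on $[k]$ is a pair of graphs on complementary vertex sets, with $m-1$ and $j$ components respectively. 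Its weight is $\rho$ raised to the total number of edges. Such pairs correspond bijectively to a graph $g'\in\mcG_k$ with $\pi_0(g')=m-1+j$, together with a choice of $m-1$ of its components to form the first factor. The weight of the pair equals $w(g')$. Summing over $j$ then gives
\[
 \d_{k,m}=\sum_{g'\in\mcG_k}(-1)^{\pi_0(g')-(m-1)}\binom{\pi_0(g')}{m-1}w(g')=P_{k,m}(\rho).
\]

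I expect the most delicate point to be the asymptotic hypothesis in Notation~\ref{notation: approx} requiring $f_{k+1}=o(f_k)$. Some $P_{k,m}(\rho)$ may vanish, and then the expansion should be read with those terms omitted. The genuine terms still decay geometrically, since successive scales differ by factors of order $nq^{n}$. I would add a sentence to this effect, as the paper does implicitly for Theorem~\ref{theorem: species E asymptotics}.
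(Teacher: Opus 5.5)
Your proposal is correct and follows the paper's proof. Like the paper, you apply Theorem~\ref{theorem: species E asymptotics} to $\mcG=\mcE\circ\mcCG$ using Lemma~\ref{lemma: Erdos-Renyi is gargantuan}, then expand $\mcE^{-1}\circ\mcCG$ as an alternating sum. You also make explicit several points the paper leaves implicit: the weighted probability coincides with the $G(n,p)$ law, $\a_{n-k}/\a_n=q^{nk}/q^{k(k+1)/2}$, and the bijection with a choice of $m-1$ components yields the factor $\binom{\pi_0(g')}{m-1}$. Your caveat that vanishing coefficients (e.g.\ $P_{2,1}(1)=0$, or $P_{0,m}=0$ for $m\ge2$) must be omitted to satisfy Notation~\ref{notation: approx} is a fair one.
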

\begin{proof}
 The main idea is to apply Theorem~\ref{theorem: species E asymptotics} to the case where
 \(
  \mcG = \mcE\circ\mcCG.
 \)
 This is possible, since the species $\mcG$ is gargantuan due to Lemma~\ref{lemma: Erdos-Renyi is gargantuan},
 and Theorem~\ref{theorem: species E asymptotics} gives us
 \[
  \bbP(g\in\mcE_m\circ\mcCG)
   \approx
  \sum\limits_{k\ge0}P_{k,m}(\rho)\cdot\binom{n}{k}\cdot\dfrac{q^{nk}}{q^{k(k+1)/2}},
 \]
 where $P_{k,m}(\rho)$ is the total weight of the species $\mcCG^{\{m-1\}}(\mcE^{-1}\circ\mcCG)$ on $[k]$.
 Now to finish the proof, we take into account that $\mcE_m\circ\mcCG=\mcCG^{\{m\}}$ is the species of graphs with exactly $m$ connected components and that
 \[
  \mcE^{-1}\circ\mcCG
   \equiv 
  \One - \mcCG + \mcCG^{\{2\}} - \mcCG^{\{3\}} + \ldots
 \]
 (see Lemma~\ref{lemma: E^(-1) = sum((-1)^(k)E_k)}).
\end{proof}

\begin{corollary}
 The dominant term in asymptotics~\eqref{formula: Erdos-Renyi SET_m-asymptotics} is
  \[
  \bbP(g\mbox{ has }m\mbox{ connected components}) = \binom{n}{m-1}\cdot \dfrac{q^{n(m-1)}}{q^{m(m-1)/2}} + O(n^mq^{mn}).
 \]
\end{corollary}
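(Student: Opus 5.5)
The plan is to read off the first nonzero term of the expansion in Theorem~\ref{theorem: Erdos-Renyi asymptotics}, or equivalently to specialize Corollary~\ref{cor: leading term of species E asymptotics} to $\mcA=\mcG$, $\mcB=\mcCG$. The hypothesis $\a_1\neq0$ holds, since the only graph on $[1]$ has no edges and so $\a_1=(\rho+1)^0=1$. The applicability conditions (the decomposition $\mcG=\mcE\circ\mcCG$ and the gargantuan property from Lemma~\ref{lemma: Erdos-Renyi is gargantuan}) were already checked in the proof of Theorem~\ref{theorem: Erdos-Renyi asymptotics}.

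As a cross-check on the coefficient, I will also verify directly that $P_{k,m}(\rho)=0$ for $k<m-1$ and $P_{m-1,m}(\rho)=1$.
\begin{itemize}
\item For a graph $g'$ on $[k]$ we have $\pi_0(g')\le k$. Hence $\binom{\pi_0(g')}{m-1}=0$ whenever $k<m-1$.
\item For $k=m-1$, the only graph with $\pi_0(g')=m-1$ is the edgeless graph. Its weight is $1$ and its sign is $(-1)^0=+1$.
\end{itemize}
This agrees with $\d_{m-1,m}=\a_1^{m-1}=1$ from Corollary~\ref{cor: leading term of species E asymptotics}.

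The remaining step is the arithmetic on the ratio of total weights. With $\a_n=q^{-\binom n2}$, we have
\[
 \dfrac{\a_{n-k}}{\a_n}=q^{\binom n2-\binom{n-k}2}=q^{nk-k(k+1)/2}.
\]
Taking $k=m-1$ gives the main term
\[
 \binom{n}{m-1}\cdot\dfrac{q^{n(m-1)}}{q^{m(m-1)/2}}.
\]
Taking $k=m$ in the error term of Corollary~\ref{cor: leading term of species E asymptotics} gives
\[
 O\!\left(n^m q^{nm-m(m+1)/2}\right)=O\!\left(n^mq^{mn}\right),
\]
because $q^{-m(m+1)/2}$ is a constant once $m$ and $p$ are fixed.

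There is no real obstacle here. The only point needing care is the identity $\binom n2-\binom{n-k}2=nk-k(k+1)/2$, together with checking that it matches the normalization $q^{nk}/q^{k(k+1)/2}$ used in Theorem~\ref{theorem: Erdos-Renyi asymptotics}.
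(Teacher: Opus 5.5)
Your proposal is correct and follows essentially the same route as the paper, which simply invokes Corollary~\ref{cor: leading term of species E asymptotics} together with $\a_1=q^{-\binom{1}{2}}=1$. Your explicit computation $\a_{n-k}/\a_n=q^{nk-k(k+1)/2}$ and your direct check that $P_{k,m}(\rho)=0$ for $k<m-1$ and $P_{m-1,m}(\rho)=1$ spell out details the paper leaves implicit.
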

\begin{proof}
 This follows from Corollary~\ref{cor: leading term of species E asymptotics} and the fact that $q^{-\binom{1}{2}}=1$.
\end{proof}

For small values of $m$ and $k$, polynomials $P_{k,m}(\rho)$ are listed in Table~\ref{table: polynomials P_k^m}.
We can see that the sum of the coefficients in the $k$th column is zero for every $k>0$,
while the sum of the zeroth column is $1$.
This observation can be explained by the fact that the sum of probabilities of the form~\eqref{formula: Erdos-Renyi SET_m-asymptotics} taken over all positive integers $m$ is equal to $1$.
That is to say, for any graph there is a unique $m\in\bbZ_{>0}$ such that $m$ is the number of connected components of this graph.
 
\begin{table}[ht!]
 \[
  \small
  \begin{array}{c|rrrrr}
   k & 0 & 1 & 2 & 3 & 4 \\
   \hline
  P_{k,1}(\rho) & 1 & -1 & - \rho + 1 & - \rho^3 - 3\rho^2 + 3\rho - 1 & - \rho^6 - 6\rho^5 - 15\rho^4 - 12\rho^3 + 15\rho^2 - 6\rho + 1 \\
  P_{k,2}(\rho) & 0 & 1 & \rho - 2 & \rho^3 + 3\rho^2 - 6\rho + 3 & \rho^6 + 6\rho^5 + 15\rho^4 + 8\rho^3 - 30\rho^2 + 18\rho - 4 \\
  P_{k,3}(\rho) & 0 & 0 & 1 & 3\rho - 3 & 4\rho^3 + 15\rho^2 - 18\rho + 6 \\
  P_{k,4}(\rho) & 0 & 0 & 0 & 1 & 6\rho - 4 \\
  P_{k,5}(\rho) & 0 & 0 & 0 & 0 & 1 \\
  \end{array}
 \]
 \caption{Polynomials $P_{k,m}(\rho)$ for $k\le4$ and $m\le5$.}
 \label{table: polynomials P_k^m}
\end{table}

\begin{example}\label{ex: polynomials P_k as graphs}
 Observe the outcome of Theorem~\ref{theorem: Erdos-Renyi asymptotics} for connected graphs, that is, for $m=1$.
 In this case, the statement reads:
 the asymptotic probability that a random graph~$g$ in the Erd\H{o}s--Rényi model~$G(n,p)$ is connected satisfies 
 \begin{equation}\label{formula: Erdos-Renyi SET-asymptotics}
  \bbP(g\mbox{ is connected}) \approx
  1 - \sum\limits_{k\ge1} P_k(\rho)\cdot \binom{n}{k}\cdot \dfrac{q^{nk}}{q^{k(k+1)/2}},
 \end{equation}
 where
 \begin{equation}\label{formula: Erdos-Renyi SET-asymptotics coefficients}
  P_k(\rho) = -P_{k,1}(\rho) = \sum\limits_{g'\in\mcG_k}(-1)^{\pi_0(g')-1}w(g').
 \end{equation}
 In other words, the coefficient $P_k(\rho)$ is the sum of the weights of all (virtual) graphs of size $k$,
 where a weight is taken positive if and only if the number of connected components of the corresponding graph is odd (otherwise, the weight is negative).
 It can be seen from Figure~\ref{figure: small graph weights} that
 \[
  P_1(\rho) = 1,
  \qquad
  P_2(\rho) = \rho - 1,
  \qquad
  P_3(\rho) = \rho^3 + 3\rho^2 - 3\rho + 1,
 \]
 and hence,
 \begin{multline*}
  \bbP(g\mbox{ is connected}) =\\
  1 - \binom{n}{1}q^{n-1} - (\rho-1)\binom{n}{2}q^{2n-3} - (\rho^3 + 3\rho^2 - 3\rho + 1)\binom{n}{3}q^{3n-6} + O(n^4q^{4n}).
 \end{multline*}
\end{example}

\subsubsection{Weighted species of tournaments with ties}

The asymptotic probability coefficients $P_k(\rho)$ described by relation~\eqref{formula: Erdos-Renyi SET-asymptotics} have a clear combinatorial meaning in the case where $\rho=1$, that is, in the case where graphs of the same size are taken uniformly, with the same probability.
Namely~\cite{MonteilNurligareev2021},
 \[
  P_k(1) = \it_k
 \]
is the number of irreducible tournaments\footnote{Reminder: a \emph{tournament} is a directed graph in which each pair of (distinct) vertices is joined by exactly one directed edge.
A tournament is \emph{irreducible} if there is no partition of its vertices into two nonempty parts $A$ and $B$ such that every pair of vertices $(a,b) \in A \times B$ is joined by an edge directed from~$a$ to~$b$.
Equivalently, a tournament is \emph{irreducible} if it is strongly connected.}
of size $k$.
The goal of this section is to generalize the concept of irreducible tournament, so that it fits the interpretation of asymptotic coefficients within the Erd\H{o}s-R\'enyi model for any positive $\rho$.

A clue to this issue consists in passing to a discrete generalization first.
For any $d\geqslant 1$, we consider labeled $d$-multigraphs and $d$-multitournaments:
\begin{itemize}
 \item 
  in a \emph{$d$-multigraph}, each pair of (distinct) vertices is joined by at most $d$ indistinguishable edges,
 \item 
  in a \emph{$d$-multitournament}, each pair of vertices $i\ne j$ is joined by $d$ directed edges
  (for some $\ell\in\{0,\ldots,d\}$ that varies from pair to pair,
  $\ell$ of $d$ edges are directed from $i$ to $j$, the other $d-\ell$ edges are directed from $j$ to $i$,
  and all co-directed edges are indistinguishable).
\end{itemize}
As we showed in~\cite{MonteilNurligareevSET},
the asymptotic probability that a random labeled $d$-multigraph $g$ is connected satisfies
 \begin{equation}\label{formula: SET-asymptotics for multigraphs}
  \bbP\big(g\mbox{ is connected}\big)
   \approx
  1
   - 
  \sum\limits_{k\ge1} \it_k(d) \cdot \binom{n}{k} \cdot \frac{(d+1)^{k(k+1)/2}}{(d+1)^{kn}},
 \end{equation}
where $\it_k(d)$ counts irreducible $d$-multitournaments of size $k$, that is, $d$-multitournaments that are strongly connected as directed graphs.
In particular, a $1$-multigraph is a (simple) graph, a $1$-multitournament is a tournament, and $\it_k(1)=\it_k$.

On the other hand, in terms of connectivity, a $d$-multigraph of size $n$ can be viewed as a graph within the Erd\H{o}s-R\'enyi model $G(n,p)$ with
 \[
  p = \dfrac{d}{d+1}.
 \]
This observation makes it natural to interpret $d$-multitournaments as directed graphs in which each pair of vertices $i\ne j$ is independently joined by an edge $\vec{ij}$, an edge $\vec{ji}$, both edges $\vec{ij}$ and $\vec{ji}$, respectively, with probability $1/(d+1)$, $1/(d+1)$, and $(d-1)/(d+1)$.

Now we are ready to define tournaments with ties within the Erd\H{o}s-R\'enyi model.

\begin{definition}\label{def: tournaments with ties}
 Let $p\in[1/2,1]$ be a fixed real number, $q=1-p$, and $n\in\bbZ_{\geqslant0}$.
 A~\emph{tournament with ties} within the \emph{Erd\H{o}s--Rényi model} $T(n,p)$ is a directed graph whose set of vertices is $[n]$ and each pair of vertices $i\ne j$ is joined independently by
\begin{itemize}
 \item 
  an edge directed from $i$ to $j$ with probability $q$,
 \item 
  an edge directed from $j$ to $i$ with probability $q$,
 \item 
  two edges, directed both from $i$ to $j$ and from $j$ to $i$, with probability $p-q$.
\end{itemize}
\end{definition}

\begin{remark}\label{remark: generalized tournaments}
 In the literature, the tournaments are usually generalized in the other way.
 Specifically, a \emph{generalized tournament} is understood as a directed graph in which an edge directed from $i$ to $j$ is taken with probability $p_{ij}$, where $p_{ij}+p_{ji}=1$ for $i\ne j$ and $p_{ii}=0$; see~\cite{Moon1968}.
 In particular, the \emph{Erd\H{o}s--Rényi model} is typically defined as the case $p_{ij}=p$ for $i<j$ and $p_{ij}=q$ for $i>j$, where positive values $p$ and $q$ satisfy $p+q=1$.
 This model concerns counting by descents.
 However, the relation~\eqref{formula: tournament decomposition} is not valid in this case; see~\cite{ArcherGesselGravesLiang2020}.
\end{remark}

\

Clearly, we cannot extend Definition~\ref{def: tournaments with ties} to $p<1/2$, since the probability value $p-q$ becomes negative.
To avoid this obstacle, let us switch to virtual species of tournaments with ties.
Thus, introduce the \emph{weighted species $\mcT$ of tournaments with ties} to be the species of directed graphs in which every pair of vertices is joined by one directed edge or by two oppositely directed edges.
For a tournament $t\in\mcT$, we assign a weight 
 \[
  w(t) = (\rho-1)^k,
 \]
where $\rho = p/q$ and $k$ is the number of pairs of vertices that are joined by two edges.
Note that for $\rho<1$, the weight $w(t)$ may be negative.
In this case, the corresponding tournaments are virtual.
Some examples of small-size tournaments with ties are presented in Figure~\ref{figure: small tournaments weights} (among them, the irreducible tournaments are depicted at the bottom of the illustration, while the reducible ones can be found at the top).

\begin{figure}[!ht]
\begin{center}
\begin{tikzpicture}[>=latex, line width=.5pt]
 \begin{scope}[xshift=-5.5cm, yshift=-0.2cm]
  \coordinate [label=90:$1$] (a1) at (0,0);
  \draw (0,-19pt) node {$w=1$};
  \filldraw [black] (a1) circle (1.5pt);
 \end{scope}
 \begin{scope}[xshift=-3cm, yshift=2.5cm]
  \coordinate (a1) at (0,10pt);
  \coordinate [label=180:$1$] (a2) at ([rotate = 120] a1);
  \coordinate [label=0:$2$] (a3) at ([rotate = 240] a1);
  \draw[->] (a2) -- (a3);
  \draw (0,-25pt) node {$w=1$};
  \foreach \p in {a2,a3} \filldraw [black] (\p) circle (1.5pt);
 \end{scope}
 \begin{scope}[xshift=-3cm]
  \coordinate (a1) at (0,10pt);
  \coordinate [label=180:$1$] (a2) at ([rotate = 120] a1);
  \coordinate [label=0:$2$] (a3) at ([rotate = 240] a1);
  \draw (0,-25pt) node {$w=\rho-1$};
  \draw[<->] (a2) -- (a3);
  \foreach \p in {a2,a3} \filldraw [black] (\p) circle (1.5pt);
 \end{scope}
 \begin{scope}[xshift=1.25cm, yshift=2.5cm]
  \coordinate [label=90:$1$] (a1) at (0,10pt);
  \coordinate [label=180:$2$] (a2) at ([rotate = 120] a1);
  \coordinate [label=0:$3$] (a3) at ([rotate = 240] a1);
  \draw (0,-25pt) node {$w=1$};
  \draw[<-] (a1) -- (a2);
  \draw[<-] (a1) -- (a3);
  \draw[<-] (a2) -- (a3);
  \foreach \p in {a1,a2,a3} \filldraw [black] (\p) circle (1.5pt);
 \end{scope}
 \begin{scope}[xshift=4cm, yshift=2.5cm]
  \coordinate [label=90:$1$] (a1) at (0,10pt);
  \coordinate [label=180:$2$] (a2) at ([rotate = 120] a1);
  \coordinate [label=0:$3$] (a3) at ([rotate = 240] a1);
  \draw (0,-25pt) node {$w=\rho-1$};
  \draw[->] (a1) -- (a2);
  \draw[->] (a1) -- (a3);
  \draw[<->] (a2) -- (a3);
  \foreach \p in {a1,a2,a3} \filldraw [black] (\p) circle (1.5pt);
 \end{scope}
 \begin{scope}[xshift=6.75cm, yshift=2.5cm]
  \coordinate [label=90:$1$] (a1) at (0,10pt);
  \coordinate [label=180:$2$] (a2) at ([rotate = 120] a1);
  \coordinate [label=0:$3$] (a3) at ([rotate = 240] a1);
  \draw (0,-25pt) node {$w=\rho-1$};
  \draw[<-] (a1) -- (a2);
  \draw[<-] (a1) -- (a3);
  \draw[<->] (a2) -- (a3);
  \foreach \p in {a1,a2,a3} \filldraw [black] (\p) circle (1.5pt);
 \end{scope}
 \begin{scope}[xshift=0cm, yshift=0cm]
  \coordinate [label=90:$1$] (a1) at (0,10pt);
  \coordinate [label=180:$2$] (a2) at ([rotate = 120] a1);
  \coordinate [label=0:$3$] (a3) at ([rotate = 240] a1);
  \draw (0,-25pt) node {$w=1$};
  \draw[->] (a1) -- (a2);
  \draw[<-] (a1) -- (a3);
  \draw[->] (a2) -- (a3);
  \foreach \p in {a1,a2,a3} \filldraw [black] (\p) circle (1.5pt);
 \end{scope}
 \begin{scope}[xshift=2.5cm]
  \coordinate [label=90:$1$] (a1) at (0,10pt);
  \coordinate [label=180:$2$] (a2) at ([rotate = 120] a1);
  \coordinate [label=0:$3$] (a3) at ([rotate = 240] a1);
  \draw (0,-25pt) node {$w=\rho-1$};
  \draw[->] (a1) -- (a2);
  \draw[<-] (a1) -- (a3);
  \draw[<->] (a2) -- (a3);
  \foreach \p in {a1,a2,a3} \filldraw [black] (\p) circle (1.5pt);
 \end{scope}
 \begin{scope}[xshift=5.25cm]
  \coordinate [label=90:$1$] (a1) at (0,10pt);
  \coordinate [label=180:$2$] (a2) at ([rotate = 120] a1);
  \coordinate [label=0:$3$] (a3) at ([rotate = 240] a1);
  \draw (0,-25pt) node {$w=(\rho-1)^2$};
  \draw[<->] (a1) -- (a2);
  \draw[<->] (a1) -- (a3);
  \draw[->] (a2) -- (a3);
  \foreach \p in {a1,a2,a3} \filldraw [black] (\p) circle (1.5pt);
 \end{scope}
 \begin{scope}[xshift=8cm]
  \coordinate [label=90:$1$] (a1) at (0,10pt);
  \coordinate [label=180:$2$] (a2) at ([rotate = 120] a1);
  \coordinate [label=0:$3$] (a3) at ([rotate = 240] a1);
  \draw (0,-25pt) node {$w=(\rho-1)^3$};
  \draw[<->] (a1) -- (a2);
  \draw[<->] (a2) -- (a3);
  \draw[<->] (a1) -- (a3);
  \foreach \p in {a1,a2,a3} \filldraw [black] (\p) circle (1.5pt);
 \end{scope}
\end{tikzpicture}
\end{center}
\caption{Weights of some labeled tournaments with ties.
}\label{figure: small tournaments weights}
\end{figure}

In view of the asymptotic probability of connected graphs, the following two observations are crucial.
First, the species $\mcT$ can be represented as a composition
 \begin{equation}\label{formula: tournament decomposition}
  \mcT = \mcL \circ \mcIT,
 \end{equation}
where $\mcIT \subset \mcT$ is the subspecies of irreducible (that is, strongly connected) tournaments with ties.
The proof of this relation is essentially the same as for standard tournaments (see Lemma~1 in~\cite{MonteilNurligareev2021}): any tournament with ties can be viewed as a sequence of its strongly connected components.
Second, the total weight of the species of tournaments with ties on the set $[n]$ is the same as for the species $\mcG$ and equals $(\rho+1)^{\binom{n}{2}}$.
This means that the species $\mcG$ of graphs and $\mcT$ of tournaments with ties are equipotent.

Taking into account the observations above, we can propose the following interpretation of relation~\eqref{formula: Erdos-Renyi SET-asymptotics}.

\begin{theorem}\label{theorem: Erdos-Renyi asymptotics in terms of tournaments}
 The asymptotic probability that a random graph~$g$ in the Erd\H{o}s--Rényi model~$G(n,p)$ is connected satisfies 
 \[
  \bbP(g\mbox{ is connected})
   \approx
  1 - \sum\limits_{k\ge1} P_k(\rho) \cdot \binom{n}{k} \cdot \dfrac{q^{nk}}{q^{k(k+1)/2}},
 \]
 where $P_k(\rho)=\big|\mcIT[k]\big|$ is the total weight of species $\mcIT$ on $[k]$.
 In particular, if $p\ge1/2$ (that is, $\rho\ge1$), then $P_k(\rho)\ge0$ for any positive integer $k$.
\end{theorem}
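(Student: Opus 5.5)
Our plan is to combine Example~\ref{ex: polynomials P_k as graphs} (the case $m=1$ of Theorem~\ref{theorem: Erdos-Renyi asymptotics}) with the equipotence $\mcG\equiv\mcT$ and the decomposition $\mcT=\mcL\circ\mcIT$. By Theorem~\ref{theorem: species E asymptotics} with $m=1$, the coefficients are $-P_{k,1}(\rho)=P_k(\rho)$ for $k\ge1$, where $P_{k,1}(\rho)$ is the total weight on $[k]$ of $\mcD(1)=\mcE^{-1}\circ\mcCG$. So it suffices to show that the generating series of $\mcE^{-1}\circ\mcCG$ equals $1-\mcIT(z)$, as exponential generating series of weighted species. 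Equality coefficientwise then gives $-P_{k,1}(\rho)=|\mcIT[k]|$ for all $k\ge1$.

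The argument is at the level of generating series. Put $A(z)=\sum_n(\rho+1)^{\binom n2}z^n/n!$, which is both $\mcG(z)$ and $\mcT(z)$ since the total weights agree. From $\mcG=\mcE\circ\mcCG$ we get $\mcCG(z)=\log A(z)$, hence $(\mcE^{-1}\circ\mcCG)(z)=e^{-\mcCG(z)}=1/A(z)$. From $\mcT=\mcL\circ\mcIT$ we get $A(z)=1/(1-\mcIT(z))$, i.e. $1/A(z)=1-\mcIT(z)$. This is precisely Lemma~\ref{lemma: magical species identity} applied to $\mcA=\mcG\equiv\mcT$: we have $\mcD(1)\equiv\One-\mcL_+^{(-1)}\circ\mcA_+$, and $\mcL_+^{(-1)}\circ\mcT_+=\mcIT$ because $\mcT_+=\mcL_+\circ\mcIT$. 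Comparing coefficients of $z^k/k!$ then yields the formula. All weights lie in $\mathbb Z[\rho]$, so the identities hold as polynomials in $\rho$ and the substitution $\rho=p/q$ is harmless.

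The only genuine input to check is the decomposition \eqref{formula: tournament decomposition} \emph{with weights}, since that is where the argument could fail. A tournament with ties decomposes uniquely into its strongly connected components, ordered linearly. Between two distinct components, every pair is joined by a single edge oriented forward, so there are no ties there. Hence every tie lies inside a component, and the weight $(\rho-1)^{\#\text{ties}}$ is multiplicative over components. This is what makes $\mcT=\mcL\circ\mcIT$ an identity of weighted species. Conversely, any sequence of irreducible tournaments with ties, joined by forward edges, gives a tournament with ties whose strong components are exactly those pieces. We expect verifying this bijection carefully (uniqueness of the ordering via the condensation being a transitive tournament) to be the main point, though it mirrors Lemma~1 of \cite{MonteilNurligareev2021}.

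Finally, for $\rho\ge1$ every tournament with ties has weight $(\rho-1)^\ell\ge0$. Therefore $P_k(\rho)=|\mcIT[k]|$ is a sum of nonnegative terms and is itself nonnegative, which proves the positivity claim.
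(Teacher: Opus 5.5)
Your proposal is correct and is essentially the paper's own proof: you reduce to the case $m=1$ of Theorem~\ref{theorem: Erdos-Renyi asymptotics}, so that $-P_k(\rho)$ is the total weight of $\mcE^{-1}\circ\mcCG$ on $[k]$ for $k\ge1$. You then identify $\mcE^{-1}\circ\mcCG\equiv\One-\mcIT$ via Lemma~\ref{lemma: magical species identity}, the equipotence $\mcG_+\equiv\mcT_+$, and $\mcT_+=\mcL_+\circ\mcIT$, which you phrase directly as $1/A(z)=1-\mcIT(z)$; your explicit check that ties never join distinct strong components, and your one-line nonnegativity argument for $\rho\ge1$, fill in points the paper only asserts.
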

\begin{proof}
 According to Theorem~\ref{theorem: Erdos-Renyi asymptotics}, polynomials $P_k(\rho)$ represent the total weight of the species $\mcE^{-1}\circ\mcCG$ on $[k]$.
 Due to Lemma~\ref{lemma: magical species identity}, this species is equipotent to the species $\big(\One-\mcL_+^{(-1)}\big)\circ\mcG_+$.
 The last one is equipotent to $\One-\mcIT$, since, as we have seen above, the species $\mcG_+$ and $\mcT_+$ are equipotent, and $\mcT_+ = \mcL_+ \circ \mcIT$.
 Therefore, $P_k(\rho)$ can be interpreted as the total weight of the species of irreducible tournaments with ties on $[k]$.
\end{proof}

\begin{example}\label{ex: polynomials P_k as tournaments}
  As we discussed in Example~\ref{ex: polynomials P_k as graphs}, the first three polynomials $P_k(\rho)$ are
 \[
  P_1(\rho) = 1,
  \qquad
  P_2(\rho) = \rho - 1,
  \qquad
  P_3(\rho) = \rho^3 + 3\rho^2 - 3\rho + 1.
 \]
 The first two correspond to unique irreducible tournaments with ties of size 1 and 2, respectively (see Figure~\ref{figure: small tournaments weights}).
 The third polynomial is equal to the sum of all irreducible tournaments with ties of size 3:
 \[
  P_3(\rho)
   =
  (\rho-1)^3 + 6(\rho-1)^2 + 6(\rho-1) + 2.
 \]
\end{example}

\subsubsection{Asymptotic probability of irreducible tournaments with ties}

Similarly to the main question stated for the Erd\H{o}s-R\'enyi model $G(n,p)$ of random graphs,
we could ask about the asymptotic probability that a random tournament with ties is irreducible within the Erd\H{o}s-R\'enyi model $T(n,p)$ introduced in Definition~\ref{def: tournaments with ties} for $p\ge1/2$.
In this section, we answer a more general question.
Namely, we consider virtual weighted species $\mcT$ of tournaments with ties and establish the complete asymptotic expansion of the ratio $\it_n^{(m)}/\it_n$,
where $m$ is an arbitrary positive integer,
while $\it_n=\it_n(\rho)$ and $\it_n^{(m)}=\it_n^{(m)}(\rho)$ are the total weights on the set $[n]$, respectively,
of virtual species of all tournaments with ties and its subspecies of tournaments consisting of $m$ strongly connected components.
This result is given by the following theorem.

\begin{theorem}\label{theorem: tournaments with ties asymptotics}
 Let $m$ be a fixed positive integer.
 The virtual weighted species $\mcT$ satisfies
 \begin{equation}\label{formula: tournaments with ties asymptotics}
  \dfrac{\it_n^{(m)}}{\it_n}
   \approx
  m \sum\limits_{k\ge0} \Big(\it_k^{(m-1)} - 2\it_k^{(m)} + \it_k^{(m+1)}\Big) \cdot \binom{n}{k} \cdot \dfrac{q^{nk}}{q^{k(k+1)/2}},
 \end{equation}
 where, by convention, $\it_0^{(0)}=1$ and $\it_k^{(0)}=0$ for $k>0$.
\end{theorem}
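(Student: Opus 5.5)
The plan is to apply Theorem~\ref{theorem: species L asymptotics} with $\mcF=\mcL$ to the decomposition $\mcT=\mcL\circ\mcIT$ from~\eqref{formula: tournament decomposition}, and then to rewrite the resulting coefficients in terms of the numbers $\it_k^{(j)}$. Since $\mcT$ may be virtual (its weights $(\rho-1)^\ell$ can be negative), we use the form of the statement given in Remark~\ref{rem: asymptotic theorem for virtual species}: the left-hand side becomes the quotient of total weights $\it_n^{(m)}/\it_n$, where $\it_n^{(m)}$ is the total weight of $\mcL_m\circ\mcIT$ on $[n]$.

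\textbf{Gargantuan hypothesis.} The total weight of $\mcT$ on $[n]$ equals $(\rho+1)^{\binom n2}=q^{-\binom n2}$, which is the same as for $\mcG$. So $\mcT$ and $\mcG$ are equipotent, and Lemma~\ref{lemma: Erdos-Renyi is gargantuan} shows that $\mcT$ is gargantuan. The prefactor then simplifies as
\[
\frac{\a_{n-k}}{\a_n}=q^{\binom n2-\binom{n-k}2}=\frac{q^{nk}}{q^{k(k+1)/2}},
\]
since $\binom n2-\binom{n-k}2=nk-\frac{k(k+1)}2$.

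\textbf{Coefficients.} By Theorem~\ref{theorem: species L asymptotics}, $\d_{k,m}(\mcL)$ is the total weight on $[k]$ of $m\,\mcIT^{m-1}(\One-\mcIT)^2$. Expanding gives
\[
m\bigl(\mcIT^{m-1}-2\mcIT^{m}+\mcIT^{m+1}\bigr).
\]
For each $j\ge0$, the species $\mcIT^j=\mcL_j\circ\mcIT$ is exactly the subspecies of tournaments with ties having $j$ strongly connected components, so its total weight on $[k]$ is $\it_k^{(j)}$. For $j=0$, $\mcIT^0=\One$, which matches the stated convention $\it_0^{(0)}=1$ and $\it_k^{(0)}=0$ for $k>0$. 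Substituting these weights yields~\eqref{formula: tournaments with ties asymptotics}.

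\textbf{Main obstacle.} The only delicate point is legitimacy. Theorem~\ref{theorem: species L asymptotics} rests on Bender's theorem applied to $U=\mcA_+(z)$ and $F(x)=(x/(1+x))^m$. Bender's theorem needs only that the coefficient sequence $(u_n)$ is gargantuan and that $F$ is analytic near the origin; positivity of the weights is never used. Hence the virtual case goes through unchanged, as Remark~\ref{rem: asymptotic theorem for virtual species} asserts. One should also check the degenerate value $\rho=0$: the sequence $(\rho+1)^{\binom n2}/n!=1/n!$ is then not gargantuan, so we assume $p\in(0,1)$, which gives $\rho>0$.
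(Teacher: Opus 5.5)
Your proposal is correct and follows essentially the same route as the paper: you get gargantuanity of $\mcT$ from its equipotence with $\mcG$ (Lemma~\ref{lemma: Erdos-Renyi is gargantuan}), apply Theorem~\ref{theorem: species L asymptotics} with $\mcF=\mcL$ to $\mcT=\mcL\circ\mcIT$ in the virtual form of Remark~\ref{rem: asymptotic theorem for virtual species}, and expand $m\,\mcIT^{m-1}(\One-\mcIT)^2$ using $\mcIT^j=\mcL_j\circ\mcIT$. You also write out the prefactor identity $\a_{n-k}/\a_n=q^{nk}/q^{k(k+1)/2}$, which the paper leaves implicit.
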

\begin{proof}
 As follows from Lemma~\ref{lemma: Erdos-Renyi is gargantuan}, the species $\mcT$ is gargantuan.
 Hence, we can apply Theorem~\ref{theorem: species L asymptotics} to the decomposition $\mcT = \mcL\circ\mcIT$ (see Remark~\ref{rem: asymptotic theorem for virtual species}).
 This gives us
 \[
   \dfrac{\it_n^{(m)}}{\it_n}
   \approx
  \sum\limits_{k\ge0} \d_{k,m}(\mcT) \cdot \binom{n}{k} \cdot \dfrac{q^{nk}}{q^{k(k+1)/2}},
 \]
 where $\d_{k,m}(\mcT)$ is the total weight of the species of structures $m\,\mcIT^{m-1}(1-\mcIT)^2$, that is,
 \[
  \d_{k,m}(\mcT) = m \Big(\it_k^{(m-1)} - 2\it_k^{(m)} + \it_k^{(m+1)}\Big).
\]
\end{proof}

\begin{corollary}\label{cor: Erdos-Renyi T(n,p) asymptotics}
 Let $m$ be a fixed positive integer, and $p\in[1/2,1)$.
 The asymptotic probability that a random tournament with ties~$t$ within the Erd\H{o}s--Rényi model~$T(n,p)$ has $m$ strongly connected components satisfies 
 \begin{equation}\label{formula: Erdos-Renyi SEQ_m-asymptotics}
  \bbP(t\mbox{ has }m\mbox{ strongly connected components})
   \approx
  \sum\limits_{k\ge0} Q_{k,m}(\rho) \cdot \binom{n}{k} \cdot \dfrac{q^{nk}}{q^{k(k+1)/2}},
 \end{equation}
 where
 \[
  Q_{k,m}(\rho) = m\Big(\it_k^{(m-1)} - 2\it_k^{(m)} + \it_k^{(m+1)}\Big).
 \]
 In particular,
 \begin{equation}\label{formula: Erdos-Renyi SEQ-asymptotics}
  \bbP(t\mbox{ is strongly connected})
   \approx
  1 - \sum\limits_{k\ge0} \Big(2\it_k - \it_k^{(2)}\Big) \cdot \binom{n}{k} \cdot \dfrac{q^{nk}}{q^{k(k+1)/2}}.
 \end{equation}
\end{corollary}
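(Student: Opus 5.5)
This corollary will follow from Theorem~\ref{theorem: tournaments with ties asymptotics} once we check that, for $p\in[1/2,1)$, the weighted quotient $\it_n^{(m)}/\it_n$ really is the probability in question. Then the second formula is just the case $m=1$.

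\emph{Step 1: weights are probabilities.} For $p\ge1/2$ we have $\rho=p/q\ge1$, so every weight $w(t)=(\rho-1)^{\ell}$ is nonnegative and the species $\mcT$ is a genuine weighted species rather than a virtual one. I will compare with Definition~\ref{def: tournaments with ties}. There, a tournament with ties $t$ on $[n]$ with $\ell$ tied pairs has probability
\[
 q^{\binom{n}{2}-\ell}(p-q)^{\ell}
  = q^{\binom{n}{2}}\Big(\tfrac{p}{q}-1\Big)^{\ell}
  = q^{\binom{n}{2}}\,w(t).
\]
Since the total weight on $[n]$ is $(\rho+1)^{\binom{n}{2}}=q^{-\binom{n}{2}}$, this says exactly that $\bbP(t)=w(t)/\big|\mcT[n]\big|_w$. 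Hence
\[
 \bbP(t\mbox{ has }m\mbox{ strongly connected components})=\it_n^{(m)}/\it_n,
\]
because, by the decomposition $\mcT=\mcL\circ\mcIT$, the subspecies $\mcL_m\circ\mcIT$ consists precisely of the tournaments with $m$ strong components. Note that here $\it_n$ is the total weight of all of $\mcT$ on $[n]$, as in the preceding theorem.

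\emph{Step 2: apply the theorem.} Substituting into formula~\eqref{formula: tournaments with ties asymptotics} gives~\eqref{formula: Erdos-Renyi SEQ_m-asymptotics} with $Q_{k,m}(\rho)=m\big(\it_k^{(m-1)}-2\it_k^{(m)}+\it_k^{(m+1)}\big)$. Gargantuanity is not a new issue: it is already covered, since $\mcT$ and $\mcG$ are equipotent (Lemma~\ref{lemma: Erdos-Renyi is gargantuan}).

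\emph{Step 3: the case $m=1$.} With the convention $\it_0^{(0)}=1$ and $\it_k^{(0)}=0$ for $k>0$, the coefficient becomes $Q_{k,1}=[k=0]-2\it_k+\it_k^{(2)}$. The $k=0$ term contributes $1$, and I pull it out of the sum. For $k=0$ the remaining bracket $2\it_0-\it_0^{(2)}$ involves $\it_0$, the empty-set value. I must check this against the stated formula, which sums from $k\ge0$. Reading $\it_k=|\mcIT[k]|$ (irreducible, as in the proof of Theorem~\ref{theorem: tournaments with ties asymptotics}, where $\d_{k,m}$ is the weight of $m\,\mcIT^{m-1}(1-\mcIT)^2$), we get $\it_0=0$ and $\it_0^{(2)}=0$, so the $k=0$ bracket vanishes. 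This yields~\eqref{formula: Erdos-Renyi SEQ-asymptotics}.

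The main obstacle is Step 3's notational bookkeeping: making sure $\it_k$ in the coefficients denotes the irreducible total weight and $\it_k^{(j)}$ the weight of $\mcIT^{j}$ on $[k]$, consistently with the proof of Theorem~\ref{theorem: tournaments with ties asymptotics}, so that the $k=0$ term is exactly the leading $1$. Steps 1 and 2 are routine.
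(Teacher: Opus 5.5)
Your proposal is correct and is the argument the paper has in mind. The paper gives no separate proof of this corollary: it is read off from Theorem~\ref{theorem: tournaments with ties asymptotics}, once one checks (as you do) that for $p\ge 1/2$ the normalized weights $w(t)/|\mcT[n]|_w$ are exactly the $T(n,p)$ probabilities, and then sets $m=1$. You are also right to read $\it_k$ in the final display as the irreducible weight $\it_k^{(1)}$, not as the total weight denoted $\it_n$ in the theorem; this is the only consistent reading, and it is what makes the $k=0$ bracket vanish and leaves the leading $1$.
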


\begin{corollary}
 The dominant term in asymptotics~\eqref{formula: Erdos-Renyi SEQ_m-asymptotics} is
  \[
  \bbP(t\mbox{ has }m\mbox{ strongly connected components}) = m\cdot(n)_{m-1}\cdot \dfrac{q^{n(m-1)}}{q^{m(m-1)/2}} + O(n^mq^{mn}),
 \]
 where $(n)_m = n(n-1)\ldots(n-m+1)$ are the falling factorials.
\end{corollary}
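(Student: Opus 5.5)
The plan is to specialize Corollary~\ref{cor: leading term of species L asymptotics} to the decomposition $\mcT=\mcL\circ\mcIT$, exactly as Theorem~\ref{theorem: tournaments with ties asymptotics} specialized Theorem~\ref{theorem: species L asymptotics}. Its hypotheses are already in place. The species $\mcT$ is gargantuan, because its total weights $(\rho+1)^{\binom{n}{2}}$ coincide with those of $\mcG$, so Lemma~\ref{lemma: Erdos-Renyi is gargantuan} applies. The decomposition $\mcT=\mcL\circ\mcIT$ is relation~\eqref{formula: tournament decomposition}. For $p\in[1/2,1)$ all weights are nonnegative, so the ratio $\it_n^{(m)}/\it_n$ is a genuine probability in the model $T(n,p)$, as in Corollary~\ref{cor: Erdos-Renyi T(n,p) asymptotics}.

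Next I would compute the ingredients of~\eqref{formula: leading term of species asymptotics} with $\mcF=\mcL$, so that $c(\mcL)=m$. The total weight on $[1]$ is $\a_1=(\rho+1)^0=1\neq 0$, hence $\a_1^{m-1}=1$. Since $\a_n=(\rho+1)^{\binom n2}=q^{-\binom n2}$, we get
\[
 \dfrac{\a_{n-m+1}}{\a_n}=q^{\binom n2-\binom{n-m+1}{2}}.
\]
The exponent simplifies to $\tfrac{(m-1)(2n-m)}{2}=n(m-1)-\tfrac{m(m-1)}{2}$, giving exactly $q^{n(m-1)}/q^{m(m-1)/2}$. As a cross-check, the first nonzero coefficient of $m\,\mcIT^{m-1}(\One-\mcIT)^2$ is $m\,(m-1)!$, since $\mcIT_1=\mcZ$. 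Multiplying by $\binom{n}{m-1}$ then yields $m\,(n)_{m-1}$, in agreement with the corollary.

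Finally, the error term is $O\big(n^m\,\a_{n-m}/\a_n\big)=O\big(n^m q^{nm-m(m+1)/2}\big)$. Since $m$ is fixed, this is $O(n^mq^{mn})$, which is the stated form.

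The only point needing care is the bookkeeping in these binomial exponents and in the falling-factorial normalization. The one genuine subtlety is that the corollary is stated for possibly weighted species and must be applied in the weighted setting. For $\rho\ge1$ all weights are nonnegative, so this causes no difficulty.
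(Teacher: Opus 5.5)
Your proposal is correct and is essentially the paper's own argument: the paper simply invokes Corollary~\ref{cor: leading term of species L asymptotics} for the decomposition $\mcT=\mcL\circ\mcIT$ with $c(\mcL)=m$ and $\a_1=q^{-\binom{1}{2}}=1$. Your simplification of $\a_{n-m+1}/\a_n$ to $q^{n(m-1)}/q^{m(m-1)/2}$ and of the remainder to $O(n^mq^{mn})$ fills in bookkeeping that the paper leaves implicit, and your cross-check via the leading coefficient $m\,(m-1)!$ is also correct.
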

\begin{proof}
 This follows from Corollary~\ref{cor: leading term of species L asymptotics} and the fact that $q^{-\binom{1}{2}}=1$.
\end{proof}

It is easy to see that the results obtained in the sections are in accordance with~\cite{MonteilNurligareev2021} and~\cite{MonteilNurligareevSEQ} that presented the particular case $p=1/2$.
In Table~\ref{table: polynomials Q_k^m}, the reader can find the polynomials $Q_{k,m}(\rho)$ for small values of $m$ and $k$.
Similarly to polynomials~$P_{k,m}(\rho)$ (see Table~\ref{table: polynomials P_k^m}),
the sum of the coefficients in the $k$th column is zero for every $k>0$,
and the sum of the zeroth column is $1$.
This is due to the fact that the sum of probabilities of the form~\eqref{formula: Erdos-Renyi SEQ_m-asymptotics} taken over all positive integers $m$ is equal to $1$.
In other words, for any tournament with ties, there is a unique $m\in\bbZ_{>0}$ such that $m$ is the number of its strongly connected components.
 
\begin{table}[ht!]
 \[
  \small
  \begin{array}{c|rrrrr}
   k & 0 & 1 & 2 & 3 & 4 \\
   \hline
  Q_{k,1}(\rho) & 1 & -2 & - 2\rho + 4 & - 2\rho^3 - 6\rho^2 + 12\rho - 8 & - 2\rho^6 - 12\rho^5 - 30\rho^4 - 16\rho^3 + 60\rho^2 - 48\rho + 16 \\
  Q_{k,2}(\rho) & 0 & 2 & 2\rho - 10 & 2\rho^3 + 6\rho^2 - 30\rho + 38 & 2\rho^6 + 12\rho^5 + 30\rho^4 - 8\rho^3 - 150\rho^2 + 228\rho - 130 \\
  Q_{k,3}(\rho) & 0 & 0 & 6 & 18\rho - 54 & 24\rho^3 + 90\rho^2 - 324\rho + 330 \\
  Q_{k,4}(\rho) & 0 & 0 & 0 & 24 & 144\rho - 336 \\
  Q_{k,5}(\rho) & 0 & 0 & 0 & 0 & 120 \\
  \end{array}
 \]
 \caption{Polynomials $Q_{k,m}(\rho)$ for $k\le4$ and $m\le5$.}
 \label{table: polynomials Q_k^m}
\end{table}

\subsection{Discrete surfaces and manifolds}
\label{subsec: surface models}

This section is devoted to surface and manifold models whose asymptotic behavior could not be treated using the approach described in~\cite{MonteilNurligareevSET}, namely quadratic square-tiled surfaces, $P$-angulated surfaces and (possibly non-orientable) Graph Encoded Manifolds (GEMs).

For those models, we will establish the asymptotic probability that a random surface (or manifold) of size $n$ is connected or consists of a prescribed number of connected components, and interpret the obtained result in a combinatorial manner.
We will achieve these objectives using Theorem~\ref{theorem: species E asymptotics} and Proposition~\ref{prop: p-periodic species asymptotics} for the case where $\mcF=\mcE$.

\subsubsection{Quadratic square-tiled surfaces}
\label{sec: quadratic square-tiled surfaces}
Square-tiled surfaces play an important role in the study of moduli spaces of abelian and quadratic differentials on Riemann surfaces (see~\cite{DouadyHubbard1975,EskinOkounkov2001,EskinOkounkov2006,HerrlichSchmithuesen2008}).
In ~\cite{MonteilNurligareevSET}, we provided the whole structure of the asymptotic expansion of the probability that an abelian square-tiled surface is connected.
This class admits a double SET/SEQ decomposition, and the associated derivative class is the class of indecomposable permutations.
Quadratic square-tiled surfaces, however, seem not to admit such a natural derivative class.

A \emph{quadratic square-tiled surface} of size $n$ is an oriented surface obtained by gluing $n$ unit squares in such a~way that the horizontal sides are glued to the horizontal sides, while the vertical sides are glued to the vertical sides.
We assume that it is allowed to glue together the sides of the same square.
We also assume that the squares are distinguishable, that is, they are labeled from 1 to $n$ (or by elements of any other set of size $n$).
The species of quadratic square-tiled surface will be denoted by $\mcQSS$, while the total number of quadratic square-tiled surfaces of size $n$ will be denoted by $\qss_n$. 

From the definition of the model, it follows that $\qss_n = \big((2n-1)!!\big)^2$.
Another relation derived from this definition is the decomposition $\mcQSS = \mcE \circ \mcCQSS$, where $\mcCQSS\subset\mcQSS$ is the subspecies of connected quadratic square-tiled surfaces.
These two relations allow us to obtain a complete asymptotic expansion of the probability that a random quadratic square-tiled surface of size $n$ is connected (and more).

\begin{proposition}\label{prop: qss asymptotics}
 Let $m$ be a fixed positive integer.
 The asymptotic probability that a~random quadratic square-tiled surface $s\in\mcQSS$ of size $n$ has $m$ connected components satisfies
 \begin{equation}\label{formula: qss asymptotics}
  \bbP(s\mbox{ has }m\mbox{ connected components})
   \approx
  \sum\limits_{k\ge0}
   \d_{k,m}(\mcQSS)
    \cdot
   \binom{n}{k}
    \cdot
   \left(\dfrac{\big(2(n-k)-1\big)!!}{(2n-1)!!}\right)^2,
 \end{equation}
 where
 \begin{align*}
  \d_{k,m}(\mcQSS)
   & = 
  \#\big\{(s',s'')\in\mcQSS_i\cdot\mcQSS_{k-i} \mid 0\leqslant i \leqslant k,\,\, \pi_0(s')=m-1,\,\, \pi_0(s'') \mbox{ is even}\big\} \\
   & -
  \#\big\{(s',s'')\in\mcQSS_i\cdot\mcQSS_{k-i} \mid 0\leqslant i \leqslant k,\,\, \pi_0(s')=m-1,\,\, \pi_0(s'') \mbox{ is odd}\big\}
 \end{align*}
 and $\pi_0(s)$ is the number of connected components of the surface $s$.
\end{proposition}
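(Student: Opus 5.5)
The plan is to deduce the statement from Theorem~\ref{theorem: species E asymptotics} together with the combinatorial reading of the coefficients given in Proposition~\ref{prop: d_(k,m) interpretation}, applied to the decomposition $\mcQSS=\mcE\circ\mcCQSS$. The only hypothesis to verify is that $\mcQSS$ is gargantuan, i.e.\ that $a_n=\qss_n/n!=\big((2n-1)!!\big)^2/n!$ is a gargantuan sequence. Note that $\qss_n\neq0$ for every $n\ge0$, so no periodicity adaptation (Proposition~\ref{prop: p-periodic species asymptotics}) is needed.

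To check the gargantuan property I will use Lemma~\ref{lemma: sufficient conditions for gargantuan sequence}. The first condition follows from
\[
 \dfrac{n a_{n-1}}{a_n}=\dfrac{n^2}{(2n-1)^2}\to\dfrac14,
\]
so $na_{n-1}=O(a_n)$. For the second condition, I set $x_k=a_ka_{n-k}$ and compute
\[
 \dfrac{x_{k+1}}{x_k}=\dfrac{(2k+1)^2}{k+1}\cdot\dfrac{n-k}{(2n-2k-1)^2}.
\]
Thus $x_{k+1}\le x_k$ is equivalent to $f(k+1)\le f(n-k)$, where $f(x)=(2x-1)^2/x$. Since $f$ is increasing for $x\ge1$ and $k+1\le n-k$ whenever $k<n/2$, the sequence $(x_k)$ decreases on that range. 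This is the same pattern as the proof of Lemma~\ref{lemma: Erdos-Renyi is gargantuan}. Alternatively, one could write $a_n$ as $(2n-1)!!$ times $(2n-1)!!/n!$ and invoke Lemma~\ref{lemma: a_nb_n is gargantuan}, but the direct check is simpler.

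With gargantuanity established, Theorem~\ref{theorem: species E asymptotics} gives the expansion with $\d_{k,m}$ equal to the count on $[k]$ of $\mcCQSS^{\{m-1\}}\cdot(\mcE^{-1}\circ\mcCQSS)$. The ratio $\a_{n-k}/\a_n$ equals $\big((2(n-k)-1)!!/(2n-1)!!\big)^2$. Using Lemma~\ref{lemma: E^(-1) = sum((-1)^(k)E_k)}, the factor $\mcE^{-1}\circ\mcCQSS$ is equipotent to $\sum_j(-1)^j\mcCQSS^{\{j\}}$. Equipotence is preserved under products, so the coefficient is the signed count of pairs $(s',s'')$ with $\pi_0(s')=m-1$, and $s''$ taken with sign $(-1)^{\pi_0(s'')}$. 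This is exactly the formula stated, as in Proposition~\ref{prop: d_(k,m) interpretation}.

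The only genuine step is the gargantuan verification, and even that is routine. The point needing care is the monotonicity of $x_k$ near $k=0$, which is handled by $f$ being increasing on $[1,\infty)$. I do not expect a real obstacle here.
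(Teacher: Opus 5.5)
Your proof is correct and is essentially the paper's own argument: gargantuanity of $\qss_n/n!$ via Lemma~\ref{lemma: sufficient conditions for gargantuan sequence} with $x_{k+1}/x_k=f(k+1)/f(n-k)$ for $f(x)=(2x-1)^2/x$, followed by Theorem~\ref{theorem: species E asymptotics} read through the alternating-sign interpretation of Proposition~\ref{prop: d_(k,m) interpretation}. One small caution is that you should drop the aside about writing $a_n=(2n-1)!!\cdot\big((2n-1)!!/n!\big)$ and invoking Lemma~\ref{lemma: a_nb_n is gargantuan}: the factor $(2n-1)!!/n!=\binom{2n}{n}/2^n$ grows only geometrically (its consecutive ratio tends to $1/2$, not $0$), so it is not gargantuan and that route fails.
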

\begin{proof}
 Since $\mcQSS = \mcE \circ \mcCQSS$, the proof idea is to apply Proposition~\ref{prop: d_(k,m) interpretation} to the species $\mcA=\mcQSS$.
 To this end, it is sufficient to check that the sequence $a_n=\qss_n/n!$ is gargantuan.
 This can be done using Lemma~\ref{lemma: sufficient conditions for gargantuan sequence}.
 As condition $na_{n-1} = O(a_n)$ is trivially satisfied, our goal is to verify that the sequence $x_n = a_ka_{n-k}$ is decreasing for $k<n/2$.
 Straightforward calculations show that
 \[
  x_k = \dfrac{\Big(\big(2k-1\big)!!\big(2(n-k)-1\big)!!\Big)^2}{k!(n-k)!},
 \]
 and the ratio
 \[
  \dfrac{x_{k+1}}{x_k} = \left(\dfrac{2k+1}{2(n-k)-1}\right)^2 \cdot \dfrac{n-k}{k+1}
 \]
 is less than $1$ if and only if
 \[
  \dfrac{\big(2(n-k)-1\big)^2}{n-k} \geqslant \dfrac{\big(2(k+1)-1\big)^2}{k+1}.
 \]
 Since the function $f(x) = (2x-1)^2/x$ is increasing for $x>1$, the latter inequality is equivalent to $(k+1)\leqslant(n-k)$.
 Hence, the sequence $(x_k)$ is decreasing for $k<n/2$, and the sequence $(a_n)$ is gargantuan.
 Thus, Theorem~\ref{theorem: species E asymptotics} is applicable in our case, and relation~\eqref{formula: species E asymptotics} converts into~\eqref{formula: qss asymptotics}, which completes our proof. 
\end{proof}

\begin{corollary}\label{cor: qss asymptotics, dominant}
 Let $m$ be a fixed positive integer.
 The dominant term of the asymptotic probability~\eqref{formula: qss asymptotics} that a random quadratic square-tiled surface $s\in\mcQSS$ of size $n$ has $m$~connected components is
 \begin{equation}\label{formula: qss asymptotics, dominant}
  \bbP(s\mbox{ has }m\mbox{ connected components})
   \sim
   \dfrac{1}{(4n)^{m-1}}.
 \end{equation}
 In particular,
 \[
  \bbP(s\mbox{ is not connected})
   \sim
   \dfrac{1}{4n}.
 \]
\end{corollary}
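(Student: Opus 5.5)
The plan is to read off the leading term of expansion~\eqref{formula: qss asymptotics} from Corollary~\ref{cor: leading term of species E asymptotics}, and then replace the double factorials by their asymptotic equivalents. Proposition~\ref{prop: qss asymptotics} already established that $\mcQSS$ is gargantuan and that $\mcQSS=\mcE\circ\mcCQSS$. So Theorem~\ref{theorem: species E asymptotics}, and hence the corollary, apply with $\mcA=\mcQSS$ and $\mcB=\mcCQSS$.

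First, I check the hypothesis $\a_1\ne0$. Here $\a_1=\qss_1=\big((2\cdot1-1)!!\big)^2=1$: a single square with its horizontal sides glued together and its vertical sides glued together. Thus $\a_1^{m-1}=1$, and the first nonzero coefficient is $\d_{m-1,m}=1$. Corollary~\ref{cor: leading term of species E asymptotics} then gives the main term with the combinatorial prefactor in the form of the falling factorial $(n)_{m-1}=n(n-1)\cdots(n-m+2)$, as indicated in the statement of that corollary. It also gives the error term $O\big(n^m\,\a_{n-m}/\a_n\big)$.

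Second, I compute the ratio of counting sequences:
\[
 \dfrac{\a_{n-m+1}}{\a_n}
  =
 \left(\dfrac{\big(2(n-m+1)-1\big)!!}{(2n-1)!!}\right)^2
  =
 \prod_{j=0}^{m-2}\dfrac{1}{(2n-1-2j)^2}
  \sim
 \dfrac{1}{(2n)^{2(m-1)}}.
\]
Here $m$ is fixed, so the finite product has $m-1$ factors, each asymptotic to $(2n)^{-2}$. Similarly $(n)_{m-1}\sim n^{m-1}$. Multiplying, the main term is asymptotic to $n^{m-1}/(4n^2)^{m-1}=1/(4n)^{m-1}$.

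Third, I verify that the error term is negligible. By the same computation, $n^m\,\a_{n-m}/\a_n=O\big(n^m\cdot n^{-2m}\big)=O(n^{-m})$, which is $o\big(n^{-(m-1)}\big)$. This yields~\eqref{formula: qss asymptotics, dominant}. The particular case is $m=2$: the probability of exactly two components is $\sim 1/(4n)$. The probability of three or more components is $O(n^{-2})$, obtained by summing the expansions or by applying the $m=1$ expansion (complement of connected) directly. Hence $\bbP(s\mbox{ is not connected})\sim1/(4n)$.

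The main point of care is the combinatorial prefactor. The statement of Corollary~\ref{cor: leading term of species E asymptotics} writes $\binom{n}{m-1}$ in the displayed formula but then names the falling factorial $(n)_{m-1}$. Taking the falling-factorial reading, consistent with Corollary~\ref{cor: leading term of species L asymptotics}, gives exactly $1/(4n)^{m-1}$ as stated. If instead one takes $\binom{n}{m-1}=(n)_{m-1}/(m-1)!$ literally, the asymptotic equivalent acquires an extra factor $1/(m-1)!$. The two readings agree for $m\le 2$, and in particular the disconnectedness statement $1/(4n)$ holds under either reading.
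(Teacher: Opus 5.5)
There is a genuine error, and it is exactly at the point you flagged. The prefactor in Corollary~\ref{cor: leading term of species E asymptotics} is $\binom{n}{m-1}$. The mention of $(n)_{m-1}$ there is a stray remnant of the $\mcL/\mcCP$ version.

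Your own computation already shows this. You correctly found $\d_{m-1,m}=\a_1^{m-1}=1$. In expansion~\eqref{formula: species E asymptotics} this coefficient multiplies $\binom{n}{k}$ with $k=m-1$, so no falling factorial can appear. Combinatorially, for $\mcE$ the $m-1$ singleton components form an unordered set, so they are chosen in $\binom{n}{m-1}$ ways. The factor $c(\mcF)\,(n)_{m-1}$ in Corollary~\ref{cor: leading term of species L asymptotics} arises only because the components there are ordered, linearly or cyclically. Corollary~\ref{cor: leading term of p-periodic species asymptotics} with $p=1$ and $c(\mcE)=1/(m-1)!$ confirms the binomial. So the appeal to consistency with the $\mcL$ case points the wrong way. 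Choosing the reading that reproduces the target is not a justification.

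With the correct prefactor, your double-factorial computation gives
\[
 \bbP(s\mbox{ has }m\mbox{ connected components})
  \sim
 \binom{n}{m-1}\cdot\frac{1}{(2n)^{2(m-1)}}
  \sim
 \frac{1}{(m-1)!}\cdot\frac{1}{(4n)^{m-1}}.
\]
So the displayed equivalence is false for $m\ge3$. For example, for $m=3$ the leading term is $\binom{n}{2}\big/\big((2n-1)(2n-3)\big)^2\sim 1/(32n^2)$, not $1/(16n^2)$.

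The paper's proof has the same defect in a different place. It correctly writes $\binom{n}{m-1}\cdot\big(\ldots\big)^2$ but then drops the $1/(m-1)!$ in the final $\sim$. Corollaries~\ref{cor: ps(p) asymptotics, dominant} and~\ref{cor: gem asymptotics, dominant} keep that factor.

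What your argument does establish correctly is the case $m\le2$. In particular, the $m=1$ expansion gives $\bbP(s\mbox{ is connected})=1-n/(2n-1)^2+O(n^{-2})$, which proves $\bbP(s\mbox{ is not connected})\sim1/(4n)$ cleanly. Use this route rather than summing the expansions over all $m\ge3$, which would need error bounds uniform in $m$. For general $m$, the correct conclusion is that the statement needs the extra factor $1/(m-1)!$.
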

\begin{proof}
 Since $\qss_1=1$, it follows from Corollary~\ref{cor: leading term of species E asymptotics} that the dominant term of~\eqref{formula: qss asymptotics} is
 \[
  \binom{n}{m-1}
   \cdot
  \left(\dfrac{\big(2(n-m+1)-1\big)!!}{(2n-1)!!}\right)^2
   \sim
  \dfrac{1}{(4n)^{m-1}}.
 \]
\end{proof}

\subsubsection{\emph{P}-angulated surfaces}
\label{sec: p-angulations surfaces}

Combinatorial maps are obtained by gluing polygons of any perimeters along their edges, preserving the orientation. This class admits a double SET/SEQ decomposition, the associated derivative class is the class of indecomposable perfect matchings (or irreducible linear matchings), and we provided the whole structure of the asymptotic expansion of the probability that a combinatorial map is connected in~\cite{MonteilNurligareevSET}.
By fixing a perimeter $P$ for the polygons, we get triangulations (introduced in~\cite{BrooksMakover2004} to study ``typical'' Riemann surfaces of high genus and in~\cite{PippengerSchleich2005} for the needs of quantum gravity), quadrangulations, and more generally $P$-angulations (introduced in~\cite{Gamburd2006} in connection with the study of Belyi surfaces).
Such classes do not seem to admit a natural SET/SEQ decomposition, so the methods of~\cite{MonteilNurligareevSET} do not apply.

 Let us formally describe the model.
 Fix an integer $P\geqslant3$.
 A \emph{$P$-angulation} of size~$n$ is an oriented surface obtained by gluing $n$ unit $P$-gons along their sides.
 Here, we assume that $P$-gons are labeled from $1$ to $n$, and that their sides are distinguishable.
 At the same time, there is no restriction on the adhesion of the sides: any side can be glued to any of the $(Pn-1)$ other sides.
 The only restriction is combinatorial: a gluing is a~partition of polygon sides in pairs, and so the number $Pn$ must be even.
 We will denote the species of $P$-angulated surfaces and the total number of $P$-angulated surfaces of size $n$ by $\mcPS(P)$ and $\ps_n(P)$, respectively.
 From the definition, it follows immediately that $\ps_n(P) = (Pn-1)!!$ for even values of $P$, while for odd values of $P$ we have
 \[
  \ps_n(P) = \left\{
   \begin{array}{ll}
    (2Pk-1)!! & \mbox{if } n=2k \\ 
    0 & \mbox{if } n=2k+1. \\
   \end{array}
  \right.
 \]

 The quadratic and abelian square-tiled surfaces discussed in the previous section form a subset of quadrangulated surfaces.
 Similarly to these particular models, each $P$-angulated surface is a union of its connected components, which gives us the decomposition $\mcPS(P) = \mcE \circ \mcCPS(P)$, where by $\mcCPS(P)$ we denote the species of connected $P$-angulated surfaces.
 In this section, we establish the asymptotic probability that a~random $P$-angulated surface consists of a given number of connected components.
 As we shall see, the form of the formulas slightly depends on the parity of $P$.

\begin{proposition}\label{prop: ps(p) asymptotics}
 Let $m\geqslant1$ and $P\geqslant3$ be two fixed integers.
 Depending on the parity of the parameter $P$, the asymptotic probability that a random surface $s\in\mcPS=\mcPS(P)$ has $m$ connected components satisfies
 \begin{multline}\label{formula: ps(p) asymptotics}
  \bbP(s\mbox{ has }m\mbox{ connected components})
   \approx \\
  \left\{\begin{array}{ll}
   \sum\limits_{k\ge0} \d_{k,m}(\mcPS) \cdot \displaystyle\binom{n}{k} \cdot \dfrac{\big(P(n-k)-1\big)!!}{(Pn-1)!!} & \mbox{if }P\mbox{ is even}  \\
   \sum\limits_{k\ge0} \d_{2k,m}(\mcPS) \cdot \displaystyle\binom{2n}{2k} \cdot \dfrac{\big(2P(n-k)-1\big)!!}{(2Pn-1)!!} & \mbox{if }P\mbox{ is odd},  \\
  \end{array}\right.
 \end{multline}
 where
 \begin{align*}
  \d_{k,m}(\mcPS)
   & = 
  \#\big\{(s',s'')\in\mcPS_i\cdot\mcPS_{k-i} \mid 0\leqslant i\leqslant k,\,\, \pi_0(s')=m-1,\,\, \pi_0(s'') \mbox{ is even}\big\} \\
   & -
  \#\big\{(s',s'')\in\mcPS_i\cdot\mcPS_{k-i} \mid 0\leqslant i\leqslant k,\,\, \pi_0(s')=m-1,\,\, \pi_0(s'') \mbox{ is odd}\big\}
 \end{align*}
 and $\pi_0(s)$ is the number of connected components of the surface $s$.
\end{proposition}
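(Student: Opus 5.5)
The plan is to split by the parity of $P$. For even $P$ we apply Proposition~\ref{prop: d_(k,m) interpretation} (i.e.\ Theorem~\ref{theorem: species E asymptotics}) to $\mcA=\mcPS(P)=\mcE\circ\mcCPS(P)$. For odd $P$ we apply Proposition~\ref{prop: p-periodic species asymptotics} with $\mcF=\mcE$ and period $p=2$. In both cases the coefficients are the total weights of $\mcD(m)=\mcB^{\{m-1\}}(\mcE^{-1}\circ\mcB)$. Lemma~\ref{lemma: E^(-1) = sum((-1)^(k)E_k)} shows that this species is equipotent to $\mcB^{\{m-1\}}\cdot(\One-\mcB^{\{1\}}+\mcB^{\{2\}}-\ldots)$. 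Hence its total weights are exactly the signed counts of pairs $(s',s'')$ in the statement, and no new combinatorial work is required there. The formula for $\ps_n(P)$ then turns $\a_{n-k}/\a_n$ into $(P(n-k)-1)!!/(Pn-1)!!$, resp.\ $(2P(n-k)-1)!!/(2Pn-1)!!$ with binomial $\binom{2n}{2k}$ after re-indexing $n\to 2n$.

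The only real task is the gargantuan hypothesis. For even $P$ we take $a_n=(Pn-1)!!/n!$; for odd $P$ we take $a_n=(2Pn-1)!!/(2n)!$. In both cases I would use Lemma~\ref{lemma: sufficient conditions for gargantuan sequence}.

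Condition (i)$'$ is immediate. For even $P$, $a_n/a_{n-1}=(Pn-1)(Pn-3)\cdots(Pn-P+1)/n$, which is of order $n^{P/2-1}\cdot$const. Since $P\ge4$ this is at least linear, so $na_{n-1}=O(a_n)$. The odd case is analogous: there the ratio is a product of $P$ linear factors over $(2n)(2n-1)$, of order $n^{P-2}$, and $P\ge3$.

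For (ii)$'$, write $x_k=a_ka_{n-k}$ and compute $x_{k+1}/x_k=g(k+1)/g(n-k)$, where $g(j)=a_j/a_{j-1}$ is the ratio above. Concretely, $g(j)=\prod_{i=1}^{P/2}(Pj-2i+1)/j$ in the even case, and a similar rational function in the odd case. The claim $x_{k+1}\le x_k$ for $k<n/2$ is then equivalent to $g(k+1)\le g(n-k)$. This follows once $g$ is increasing on $j\ge1$ (or for $j$ beyond a fixed threshold, with finitely many small $k$ handled separately), because $k+1\le n-k$.

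I expect the monotonicity of $g$ to be the main, though modest, obstacle. It mirrors the argument for $f(x)=(2x-1)^2/x$ in Proposition~\ref{prop: qss asymptotics}. I would handle it by noting that $g$ is a product of factors $(Pj-c)$ with $c<P$, divided by $j$ (resp.\ $j(2j-1)$). Each quotient $(Pj-c)/j=P-c/j$ is increasing and positive, and the remaining factors $Pj-c$ are increasing and positive. A product of positive increasing functions is increasing.

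In the odd case, pairing two factors $(2Pj-c)$ with the denominator $(2j)(2j-1)$ gives the same structure. I need to check that the pairing leaves positive increasing pieces, which holds since $2Pj-c>0$ and $(2Pj-c)/(2j)$, $(2Pj-c')/(2j-1)$ are increasing when $c\le 2P\cdot\tfrac12$ appropriately. If this is delicate for small $j$, I will invoke the ``all but finitely many'' clause. Once gargantuan-ness is established, the two propositions give \eqref{formula: ps(p) asymptotics} directly.
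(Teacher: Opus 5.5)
Your proposal is correct and takes essentially the same route as the paper. Both split by parity, apply Proposition~\ref{prop: d_(k,m) interpretation} for even $P$ and Proposition~\ref{prop: p-periodic species asymptotics} with $p=2$ for odd $P$, and verify the gargantuan property via Lemma~\ref{lemma: sufficient conditions for gargantuan sequence}; your ratio $a_j/a_{j-1}$ is exactly the paper's increasing function $(Px-1)\cdots(Px-(P-1))/x$, and in the odd case it equals half of $(2Px-1)\cdots(2Px-(2P-1))/(x(2x-1))$.

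One small correction in the odd case: $(2Pj-c')/(2j-1)=P+(P-c')/(2j-1)$ is increasing exactly when $c'\ge P$. The natural choice is $c'=P$, which lies in $\{1,3,\ldots,2P-1\}$ because $P$ is odd and gives the constant factor $P$. With that pairing the ratio is increasing for all $j\ge1$, so no threshold argument is needed.
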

\begin{proof}
 The relation $\mcPS(P) = \mcE \circ \mcCPS(P)$ suggests that we apply Proposition~\ref{prop: d_(k,m) interpretation}.
 To do so, it would suffice to verify that the species $\mcPS(P)$ is gargantuan.
 However, this fact is formally correct only for the case where $P$ is even, since for odd values of $P$ the sequence $\big(\ps_n(P)\big)$ is 2-periodic.
 Thus, depending on the parity of the parameter $P$, we reason slightly differently.
 
 In the case where $P$ is even, we show that the sequence $a_n=\ps_n(P)/n!$ is gargantuan using Lemma~\ref{lemma: sufficient conditions for gargantuan sequence}.
 Since condition $na_{n-1} = O(a_n)$ holds trivially, the only thing to check is that the sequence $x_n = a_ka_{n-k}$ is decreasing for $k<n/2$.
 The case in the hands reads
 \[
  x_k = \dfrac{\big(Pk-1\big)!!\big(P(n-k)-1\big)!!}{k!(n-k)!},
 \]
 and the ratio
 \[
  \dfrac{x_{k+1}}{x_k} = \dfrac{\big(P(k+1)-1\big)\ldots\big(Pk+1\big)}{\big(P(n-k)-1\big)\ldots\big(P(n-k-1)+1\big)} \cdot \dfrac{n-k}{k+1}
 \]
 is less than $1$ if and only if
 \[
  \dfrac{\big(P(n-k)-1\big)\big(P(n-k)-3\big)\ldots\big(P(n-k)-(P-1)\big)}{\big(P(k+1)-1\big)\big(P(k+1)-3\big)\ldots\big(P(k+1)-(P-1)\big)} \geqslant \dfrac{n-k}{k+1}.
 \]
 Since the function $\big(Px-1\big)\big(Px-3\big)\ldots\big(Px-(P-1)\big)/x$ is increasing for $x>1$, the latter inequality is equivalent to $(k+1)\leqslant(n-k)$.
 Hence, the sequence $(x_k)$ is decreasing for $k<n/2$, and the sequence $(a_n)$ is gargantuan.
 Proposition~\ref{prop: d_(k,m) interpretation} can therefore be applied to the species $\mcA = \mcPS(P)$, and this gives relation~\eqref{formula: ps(p) asymptotics}.
 
 In the case where $P$ is odd, we verify that the sequence $b_n=\ps_{2n}(P)/(2n)!$ is gargantuan.
 Again, we use Lemma~\ref{lemma: sufficient conditions for gargantuan sequence}, and the reasoning is similar: trivially, we have $nb_{n-1} = O(b_n)$ as $n\to\infty$, and the sequence 
 \[
  y_n = b_kb_{n-k} = 
  \dfrac{\big(2Pk-1\big)!!\big(2P(n-k)-1\big)!!}{(2k)!\big(2(n-k)\big)!}
 \]
 is decreasing for $k<n/2$.
 The latter arises from the fact that the inequality $y_{k+1}<y_k$ is equivalent to $(k+1)\leqslant(n-k)$ for large $n$, which, in turn, follows from the fact that the function $\big(2Px-1\big)\big(2Px-3\big)\ldots\big(2Px-(2P-1)\big)/\big(x(2x-1)\big)$ is increasing for $x>1$.
 
 Since the sequence $(b_n)$ is gargantuan, we can apply Proposition~\ref{prop: p-periodic species asymptotics} with $p=2$ to the species $\mcA = \mcPS(P)$ and $\mcF = \mcE$.
 As a consequence, we obtain relation~\eqref{formula: ps(p) asymptotics}, which completes the proof.
\end{proof}

\begin{corollary}\label{cor: ps(p) asymptotics, dominant}
 Let $m\geqslant1$ and $P\geqslant3$ be two fixed integers.
 Depending on the parity of the parameter $P$, the dominant term of the asymptotic probability~\eqref{formula: ps(p) asymptotics} that a random surface $s\in\mcPS=\mcPS(P)$ has $m$ connected components satisfies
 \begin{multline}\label{formula: ps(p) asymptotics, dominant}
  \bbP(s\mbox{ has }m\mbox{ connected components})
   \sim \\
  \left\{\begin{array}{rcl}
   \dfrac{1}{(m-1)!}\cdot\left(\dfrac{(P-1)!!}{P^{P/2}} \cdot \dfrac{1}{n^{P/2-1}}\right)^{m-1} & & \mbox{if }P\mbox{ is even}  \\
   \dfrac{1}{(m-1)!}\cdot\left(\dfrac{(2P-1)!!}{P^{P} \cdot 2^{P-1}} \cdot \dfrac{1}{n^{P-2}}\right)^{m-1} & & \mbox{if }P\mbox{ is odd}.  \\
  \end{array}\right.
 \end{multline}
 In particular,
 \[
  \bbP(s\mbox{ is not connected})
   \sim \\
  \left\{\begin{array}{rcl}
   \dfrac{(P-1)!!}{P^{P/2}} \cdot \dfrac{1}{n^{P/2-1}} & & \mbox{if }P\mbox{ is even}  \\
   \dfrac{(2P-1)!!}{P^{P} \cdot 2^{P-1}} \cdot \dfrac{1}{n^{P-2}} & & \mbox{if }P\mbox{ is odd}.  \\
  \end{array}\right.
 \]
\end{corollary}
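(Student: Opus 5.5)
The plan is to specialize the general leading-term results, Corollary~\ref{cor: leading term of species E asymptotics} for even $P$ and Corollary~\ref{cor: leading term of p-periodic species asymptotics} with $p=2$ and $\mcF=\mcE$ for odd $P$, and then evaluate the resulting double-factorial ratios asymptotically. The hypotheses of these corollaries, namely the gargantuan property of $\ps_n(P)/n!$ (or of $\ps_{2n}(P)/(2n)!$) and the decomposition $\mcPS(P)=\mcE\circ\mcCPS(P)$, were already verified in the proof of Proposition~\ref{prop: ps(p) asymptotics}. So only the nonvanishing of the first coefficient and the computation remain.

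\emph{Even $P$.} Here $\a_1=\ps_1(P)=(P-1)!!\neq0$. Corollary~\ref{cor: leading term of species E asymptotics} gives the leading term
\[
 \binom{n}{m-1}\,\big((P-1)!!\big)^{m-1}\,\dfrac{\big(P(n-m+1)-1\big)!!}{(Pn-1)!!}.
\]
The ratio of double factorials is a product of $P(m-1)/2$ factors, each of size $Pn+O(1)$. Hence it is asymptotic to $(Pn)^{-P(m-1)/2}$. Combining this with $\binom{n}{m-1}\sim n^{m-1}/(m-1)!$ yields exactly
\[
 \dfrac{1}{(m-1)!}\left(\dfrac{(P-1)!!}{P^{P/2}}\cdot\dfrac{1}{n^{P/2-1}}\right)^{m-1}.
\]
I must also check that the error term $O(n^m\a_{n-m}/\a_n)$ is negligible. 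Relative to the main term it is of order $n\cdot(Pn)^{-P/2}\to0$, so it is.

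\emph{Odd $P$.} Here $\a_2=\ps_2(P)=(2P-1)!!\neq0$, with $c(\mcE)=1/(m-1)!$. Corollary~\ref{cor: leading term of p-periodic species asymptotics} gives
\[
 \dfrac{1}{(m-1)!}\cdot\dfrac{(2n)_{2(m-1)}}{2^{m-1}}\cdot\big((2P-1)!!\big)^{m-1}\cdot\dfrac{\big(2P(n-m+1)-1\big)!!}{(2Pn-1)!!}.
\]
We have $(2n)_{2(m-1)}\sim(2n)^{2(m-1)}$. The double-factorial ratio has $P(m-1)$ factors, each of size $2Pn+O(1)$, so it is $\sim(2Pn)^{-P(m-1)}$. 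Collecting the powers of $2$ gives, per component, $4/(2\cdot 2^P)=2^{1-P}$, which produces $\big((2P-1)!!/(P^P2^{P-1})\cdot n^{2-P}\big)^{m-1}/(m-1)!$, as claimed. The error term is smaller by a factor of order $n^2(2Pn)^{-P}\to0$, since $P\ge3$.

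For the statement about non-connectedness, I would use the $m=1$ expansion and write $\bbP(s\text{ not connected})=1-\bbP(s\text{ connected})$. The $k=0$ coefficient of $\mcE^{-1}\circ\mcB$ is $1$. The next nonzero coefficient is $-\a_1$ for even $P$, and $-\a_2$ at index $2$ for odd $P$. Thus $1-\bbP(\text{connected})$ is asymptotic to the same expression as the $m=2$ case, and this gives the displayed formulas.

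The main obstacle is only bookkeeping. In the odd case one has to keep the index $n$ in Corollary~\ref{cor: leading term of p-periodic species asymptotics} consistent with the surface size, which is $2n$, and collect the factors of $2$ correctly. Everything else is routine Stirling-free estimation of finite products.
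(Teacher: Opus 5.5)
Your proposal is correct and follows the paper's own proof. You specialize Corollary~\ref{cor: leading term of species E asymptotics} with $\a_1=(P-1)!!$ for even $P$, and Corollary~\ref{cor: leading term of p-periodic species asymptotics} with $p=2$ and $\a_2=(2P-1)!!$ for odd $P$, then count the $P(m-1)/2$ (resp.\ $P(m-1)$) factors of the double-factorial ratio. Your explicit check that the error terms are negligible, and your derivation of the non-connectedness asymptotics from the $m=1$ expansion (coefficient $-\a_1$, resp.\ $-\a_2$), are small additions that the paper leaves implicit.
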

\begin{proof}
 If $P$ is even, then the total number of a single $P$-gon gluings is $\ps_1(P)=(P-1)!!$.
 Hence, according to Corollary~\ref{cor: leading term of species E asymptotics}, the leading term of~\eqref{formula: ps(p) asymptotics} behaves as
 \[
  \binom{n}{m-1}
   \cdot 
  \dfrac{\big((P-1)!!\big)^{m-1} \cdot \big(P(n-m+1)-1\big)!!}{(Pn-1)!!}
   \sim
  \dfrac{1}{(m-1)!}
   \cdot 
  \left(\dfrac{(P-1)!!}{P^{P/2}} \cdot \dfrac{1}{n^{P/2-1}}\right)^{m-1}.
 \]
 If $P$ is odd, then we have $\ps_2(P) = (2P-1)!!$ gluings of two $P$-gons, and therefore, due to Corollary~\ref{cor: leading term of p-periodic species asymptotics} with $p=2$, the leading term of~\eqref{formula: ps(p) asymptotics} is equivalent to
 \[
  \dfrac{(2n)! \cdot \big((2P-1)!!\big)^{m-1} \cdot \big(2P(n-m+1)-1\big)!!}
  {(m-1)! \cdot 2^{m-1} \cdot \big(2(n-m+1)\big)!! \cdot (2Pn-1)!!}
   \sim
  \dfrac{1}{(m-1)!}\cdot\left(\dfrac{(2P-1)!!}{P^{P} \cdot 2^{P-1}} \cdot \dfrac{1}{n^{P-2}}\right)^{m-1}.
 \]
\end{proof}

\subsubsection{Graph Encoded Manifolds}
\label{sec: GEMs}

Graph Encoded Manifolds (GEMs) were introduced within ``crystallization theory'' in order to encode compact PL-manifolds~\cite{Pezzana1974,Pezzana1975,FerriGagliardiGrasselli1986}.
They reappeared in the encoding of colored tensor models seen as quantum gravity theories (see, for instance,~\cite{BonzomGurauRielloRivasseau2011, Gurau2011, GurauRyan2012, Witten2019}).
In ~\cite{MonteilNurligareevSET}, we provided the whole structure of the asymptotic expansion of the probability that an oriented GEM is connected. This class admits a double SET/SEQ decomposition, and the associated derivative class is the class of indecomposable (multi-)permutations.
The class of general (oriented or not) GEMs, however, does not seem to admit a natural SET/SEQ decomposition.

Let us move on to the description of the model.
Fix an integer $D\geqslant2$, which we will refer to as the dimension.
To obtain a \emph{graph encoded manifold (GEM)} of dimension $D$ of size $n$, we take $n$ simplices of dimension $D$ and glue them together along their hyperfaces.
Here, we assume that the simplices are labeled from $1$ to $n$,
and that the vertices of each simplex are labeled from $1$ to $D+1$.
The gluing is carried out according to the following rule: the hyperfaces identified with each other must be opposed to vertices with identical labels.
Formally, we choose a collection of perfect matchings $\alpha_1,\ldots,\alpha_{D+1}\in S_n$ and identify the hyperface that is opposite to the $k$th vertex of the $i$th simplex with the hyperface opposite to the $k$th vertex of the $\alpha_k(i)$ simplex.
In an equivalent manner, one can think of GEMs of dimension $D$ and size $n$ as $(D+1)$-regular graphs on $n$ vertices labeled from $1$ to $n$, where every edge carries a color from the set $[D+1]$ and every vertex is incident to exactly one edge of each color.

We denote the species of $D$-dimensional GEMs and its connected subspecies, respectively, by $\mcGEM(D)$ and $\mcCGEM(D)$.
From the definition of the model it follows that these species are linked by the relation
 \[
  \mcGEM(D) = \mcE \circ \mcCGEM(D).
 \]
Another implication of the definition is the formula for the number $\gem_{2n}(D)$ of GEMs of dimension $D$ and size $2n$: 
 \[
  \gem_{2n}(D) = \big((2n-1)!!\big)^{D+1}.
 \]
Evidently, there are no GEMs of any odd size.
Our goal is to derive from these two relations the asymptotic probability that a GEM of dimension $D$ consists of a given number of connected components. 

\begin{proposition}\label{prop: gem asymptotics}
 Let $m$ be a fixed positive integer.
 The asymptotic probability that a~random GEM $g\in\mcGEM=\mcGEM(D)$ of size $2n$ has $m$ connected components satisfies
 \[
  \bbP(g\mbox{ has }m\mbox{ connected components})
   \approx
  \sum\limits_{k\ge0}
   \d_{2k,m}(\mcGEM)
    \cdot
   \binom{2n}{2k}
    \cdot
   \left(\dfrac{\big(2(n-k)-1\big)!!}{(2n-1)!!}\right)^{D+1},
 \]
 where
 \begin{align*}
  \d_{k,m}(\mcGEM)
   &
   = 
  \#\big\{(g',g'')\in\mcGEM_i\cdot\mcGEM_{k-i} \mid 0\leqslant i\leqslant k,\,\, \pi_0(g')=m-1,\,\, \pi_0(g'') \mbox{ is even}\big\} \\
   &
   -
  \#\big\{(g',g'')\in\mcGEM_i\cdot\mcGEM_{k-i} \mid 0\leqslant i\leqslant k,\,\, \pi_0(g')=m-1,\,\, \pi_0(g'') \mbox{ is odd}\big\}
 \end{align*}
 and $\pi_0(g)$ is the number of connected components of the GEM $g$.
\end{proposition}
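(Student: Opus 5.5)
The plan is to proceed exactly as in the odd-$P$ case of Proposition~\ref{prop: ps(p) asymptotics}. We use the decomposition $\mcGEM(D)=\mcE\circ\mcCGEM(D)$ together with Proposition~\ref{prop: p-periodic species asymptotics} for $\mcF=\mcE$ and $p=2$. The sequence $(\gem_n(D))$ vanishes at all odd indices and equals $\big((2n-1)!!\big)^{D+1}\neq0$ at index $2n$, so it is $2$-periodic in the sense of Section~\ref{subsec: p-periodic sequences}.

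The first step, and the only real work, is to check that $b_n=\gem_{2n}(D)/(2n)!=\big((2n-1)!!\big)^{D+1}/(2n)!$ is gargantuan. For this we use Lemma~\ref{lemma: sufficient conditions for gargantuan sequence}.

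For condition (i)', compute
\[
 \frac{b_n}{b_{n-1}}=\frac{(2n-1)^{D+1}}{2n(2n-1)}=\frac{(2n-1)^{D}}{2n}.
\]
Since $D\ge2$, this ratio grows at least linearly in $n$, so $nb_{n-1}=O(b_n)$.

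For condition (ii)', set $y_k=b_kb_{n-k}$. Then
\[
 \frac{y_{k+1}}{y_k}=\frac{b_{k+1}/b_k}{b_{n-k}/b_{n-k-1}}=\frac{f(k+1)}{f(n-k)},\qquad f(x)=\frac{(2x-1)^{D}}{2x}.
\]
The function $f$ is increasing for $x\ge1$ when $D\ge2$. Hence $y_{k+1}\le y_k$ exactly when $k+1\le n-k$, so $(y_k)$ is decreasing for $k<n/2$. Small boundary cases, for instance $k=0$ where $b_0=1$, need a quick separate check, but they cause no trouble. This monotonicity verification is the step I expect to need the most care. It is nevertheless routine, being the same computation as for quadratic square-tiled surfaces with the exponent $2$ replaced by $D+1$.

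With gargantuanity established, Proposition~\ref{prop: p-periodic species asymptotics} gives
\[
 \bbP(g\in\mcE_m\circ\mcCGEM)\approx\sum_{k\ge0}\d_{2k,m}\binom{2n}{2k}\frac{\gem_{2(n-k)}}{\gem_{2n}}.
\]
Here $\d_{k,m}$ is the total weight on $[k]$ of $\mcD(m)=\mcCGEM^{\{m-1\}}\cdot(\mcE^{-1}\circ\mcCGEM)$. Substituting $\gem_{2j}=\big((2j-1)!!\big)^{D+1}$ produces the stated ratio.

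It remains to interpret $\d_{k,m}$. By Lemma~\ref{lemma: E^(-1) = sum((-1)^(k)E_k)}, $\mcE^{-1}\circ\mcCGEM$ is equipotent to $\sum_j(-1)^j\mcCGEM^{\{j\}}$. Taking the product with $\mcCGEM^{\{m-1\}}$ and counting labeled pairs $(g',g'')$, each pair is counted with sign $(-1)^{\pi_0(g'')}$, which is exactly the difference displayed in the statement, as in Proposition~\ref{prop: d_(k,m) interpretation}. This equipotence is enough, because only the counting sequences enter the asymptotic formula.
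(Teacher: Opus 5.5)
Your proposal is correct, and its skeleton matches the paper's. You use $\mcGEM(D)=\mcE\circ\mcCGEM(D)$, apply Proposition~\ref{prop: p-periodic species asymptotics} with $\mcF=\mcE$ and $p=2$, and reduce everything to gargantuanity of $\gem_{2n}(D)/(2n)!$. The difference lies in that one real step.

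The paper argues modularly, in three moves:
\begin{itemize}
\item it writes $\gem_{2n}(2)/(2n)!=\big((2n-1)!!\big)^2/n!\cdot 2^{-n}$ and combines the quadratic square-tiled surface computation with the lemma on multiplication by a geometric progression;
\item it shows separately that $(2n-1)!!$ is gargantuan;
\item it obtains the general case from $\gem_{2n}(D)/(2n)!=\big(\gem_{2n}(2)/(2n)!\big)\cdot\big((2n-1)!!\big)^{D-2}$ via the product closure Lemma~\ref{lemma: a_nb_n is gargantuan}.
\end{itemize}

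You instead apply Lemma~\ref{lemma: sufficient conditions for gargantuan sequence} directly to $\big((2n-1)!!\big)^{D+1}/(2n)!$ for all $D\ge2$ at once, and your computations hold. The ratio $b_n/b_{n-1}=(2n-1)^D/(2n)$ grows at least linearly. Moreover $\frac{d}{dx}\log f(x)=\frac{2(D-1)x+1}{x(2x-1)}>0$ for $x>1/2$, so $y_{k+1}\le y_k$ if and only if $k+1\le n-k$. This includes $k=0$, since $b_0=1$ and $b_1/b_0=f(1)$.

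Your route is shorter and self-contained: it does not rely on the imported product lemma or on the square-tiled computation. The paper's route instead illustrates the closure properties of gargantuan sequences and recycles earlier work. You also make the sign interpretation of $\d_{k,m}$ explicit via Lemma~\ref{lemma: E^(-1) = sum((-1)^(k)E_k)}, which the paper leaves to Proposition~\ref{prop: d_(k,m) interpretation}.
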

\begin{proof}
 Due to the relation $\mcGEM(D) = \mcE \circ \mcCGEM(D)$, the result comes from Proposition~\ref{prop: p-periodic species asymptotics} applied to the species $\mcA=\mcGEM(D)$ and $\mcF=\mcE$ with $p=2$.
 Thus, the only thing to verify is that the 2-periodic sequence $a_n(D)=\gem_{2n}(D)/(2n)!$ is gargantuan.
 This can be done in several steps.

 First, we prove that $\big(a_n(D)\big)$ is gargantuan for $D=2$.
 In this case, we have
 \[
  a_n(2) 
   =
  \dfrac{\big((2n-1)!!\big)^3}{(2n)!}
   = 
  \dfrac{\big((2n-1)!!\big)^2}{n!} \cdot \dfrac{1}{2^n}.
 \]
 As we have seen in the proof of Proposition~\ref{prop: qss asymptotics}, the sequence $\qss_n/n! = \big((2n-1)!!\big)^2/n!$ is gargantuan.
 Therefore, due to Lemma~\ref{lemma: a_nс^n is gargantuan}, the sequence $\big(a_n(2)\big)$ is gargantuan too.

 Second, we make sure that the sequence $b_n = (2n-1)!!$ is gargantuan.
 To this end, we apply Lemma~\ref{lemma: sufficient conditions for gargantuan sequence}.
 Indeed, its first condition holds, since
 \[
  nb_{n-1} = n(2n-3)!! = O(b_{n}).
 \]
 For the second condition, the sequence $x_k=a_ka_{n-k}$ is decreasing for $k<n/2$, because
 \[
  \dfrac{x_{k+1}}{x_k} = \dfrac{2k+1}{2(n-k)-1} \leqslant 1
 \]
 in this case.
 Thus, the sequence $(b_n)$ is truly gargantuan.

 Finally, the general case comes from the relation $a_n(D) = a_n(2) \cdot b_n^{D-2}$.
 Here, according to Lemma~\ref{lemma: a_nb_n is gargantuan}, we can claim that $\big(a_n(D)\big)$ is gargantuan as a piecewise product of several gargantuan sequences.
\end{proof}

\begin{corollary}\label{cor: gem asymptotics, dominant}
 Let $m$ be a fixed positive integer.
 The dominant term of the asymptotic probability that a random GEM $g\in\mcGEM=\mcGEM(D)$ of size $2n$ has $m$ connected components satisfies
 \begin{equation}\label{formula: gem asymptotics, dominant}
  \bbP(g\mbox{ has }m\mbox{ connected components})
   \sim
  \dfrac{1}{(m-1)!} \cdot \left(\dfrac{1}{2^Dn^{D-1}}\right)^{m-1}.
 \end{equation}
 In particular,
 \[
  \bbP(g\mbox{ is not connected})
   \sim
  \dfrac{1}{2^Dn^{D-1}}.
 \]
\end{corollary}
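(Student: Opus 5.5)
The corollary follows from Corollary~\ref{cor: leading term of p-periodic species asymptotics} applied with $p=2$ and $\mcF=\mcE$. Its hypotheses were verified in the proof of Proposition~\ref{prop: gem asymptotics}: the sequence $(\gem_n(D))$ is $2$-periodic, and $\big(\gem_{2n}(D)/(2n)!\big)$ is gargantuan. It remains to check $\a_2=\gem_2(D)\neq0$. Two $D$-simplices can be glued in only one way, since each $\alpha_k$ must be the unique perfect matching of $\{1,2\}$. Hence $\gem_2(D)=(1!!)^{D+1}=1$.

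With $c(\mcE)=1/(m-1)!$, the leading term is
\[
 \frac{1}{(m-1)!}\cdot\frac{(2n)_{2(m-1)}}{2^{m-1}}\cdot\left(\frac{\big(2(n-m+1)-1\big)!!}{(2n-1)!!}\right)^{D+1},
\]
and the error term is $O\big(n^{2m}\,\a_{2(n-m)}/\a_{2n}\big)$. The next step is the elementary asymptotics as $n\to\infty$ with $m$ fixed:
\[
 (2n)_{2(m-1)}\sim(2n)^{2(m-1)},
 \qquad
 \frac{(2n-1)!!}{\big(2(n-m+1)-1\big)!!}=\prod_{j=0}^{m-2}(2n-1-2j)\sim(2n)^{m-1}.
\]
Multiplying these out, the leading term is asymptotic to
\[
 \frac{1}{(m-1)!}\cdot\frac{(2n)^{2(m-1)}}{2^{m-1}(2n)^{(D+1)(m-1)}}
 =\frac{1}{(m-1)!}\left(\frac{1}{2\cdot(2n)^{D-1}}\right)^{m-1}
 =\frac{1}{(m-1)!}\left(\frac{1}{2^Dn^{D-1}}\right)^{m-1},
\]
which is the claimed formula.

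To justify ``$\sim$'' we need the error term to be negligible. Compared with the leading term, it carries an extra factor $n^{2}\,\a_{2(n-m)}/\a_{2(n-m+1)}\asymp n^{2}/n^{D+1}=n^{1-D}$, which tends to $0$ because $D\ge2$. The ``in particular'' statement for $\bbP(g\mbox{ is not connected})$ follows by taking $m=2$: the terms with $m\ge3$ are of smaller order, so their sum is dominated by the $m=2$ term. This is cleaner if one writes $1-\bbP(\mbox{connected})$ and uses the expansion of Proposition~\ref{prop: gem asymptotics} for $m=1$, whose first correction term is $-\d_{2,1}\binom{2n}{2}\a_{2n-2}/\a_{2n}$ with $\d_{2,1}=-\gem_2(D)=-1$ (from $\mcE^{-1}\circ\mcB$ on $[2]$ with $\mcB_1=\Zero$).

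The only delicate point is keeping track of the factor $2^{m-1}$ coming from $(p!)^{m-1}$ and the $(2n)$ powers; everything else is routine.
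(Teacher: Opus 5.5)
Your proof is correct and follows the paper's route: you apply Corollary~\ref{cor: leading term of p-periodic species asymptotics} with $p=2$ and $\mcF=\mcE$, use $\gem_2(D)=1$, and expand $(2n)_{2(m-1)}$ and the double-factorial ratio, which is exactly the paper's one-line computation. Your additional checks go beyond what the paper writes: the error term is smaller than the leading term by a factor $\asymp n^{1-D}$, and the ``not connected'' claim follows from the $m=1$ expansion with $\d_{2,1}=-1$; rely on that $m=1$ argument rather than on summing the $m\ge3$ terms, since the latter would need bounds that hold uniformly in $m$.
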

\begin{proof}
 Since $\gem_2(D)=1$, these relations follow from Corollary~\ref{cor: leading term of p-periodic species asymptotics}:
 \[
  \dfrac{1}{(m-1)!}
   \cdot
  \dfrac{(2n)_{2(m-1)}}{2^{m-1}}
   \cdot
  \left(\dfrac{\big(2(n-m+1)-1\big)!!}{(2n-1)!!}\right)^{D+1}
   \sim
  \dfrac{1}{(m-1)!} \cdot \left(\dfrac{1}{2^Dn^{D-1}}\right)^{m-1}.
 \]
\end{proof}

\section{Conclusion}
\label{sec: conclusion}

We have seen that the structure of an asymptotic expansion of irreducibles determined by a composition $\mcA = \mcF(\mcB)$ arises from the derivative structure $\mcD$ of this composition.
Theorems~\ref{theorem: species E asymptotics} and~\ref{theorem: species L asymptotics} provide an expression for $\mcD$ in terms of the initial species of structures~$\mcA$ in the case where $\mcA$ is gargantuan and $\mcF\in\{\mcE,\mcL,\mcCP\}$.
In general, the derivative structure $\mcD$ is virtual, which is well-illustrated by connected graphs within the Erd\H{o}s-R\'enyi model $G(n,p)$:
the involved coefficients represent tournaments with ties that are purely virtual for $p<1/2$.
On the other hand, in some cases, the asymptotic coefficients are nonnegative, and hence, can be potentially interpreted as total weights of species that are not virtual (and even as counting sequences of some combinatorial classes).
For instance, this is the case for the cycle composition $\mcA=\mcCP(\mcB)$, as well as for the double decomposition $\mcA=\mcE(\mcB)=\mcL(\mcD)$ where irreducibility is understood as connectivity.
Establishing general conditions for the coefficients to be nonnegative is an open problem.

The condition of being gargantuan, imposed on the species $\mcA$, is necessary to apply our theorems.
In the case where this condition does not hold, we cannot ensure that a random object $s\in\mcA$ is irreducible, \emph{i.e.} belongs to $\mcF_1(\mcB)$, with high probability.
Extending our structural approach to the case where the sequence $(\a_n)$ of weights is not gargantuan and the limiting probability differs from $1$ is of great interest.
Potential applications include, for example, the asymptotic probability of various families of trees.

Another setting that we cannot seize by the presented approach concerns such combinatorial structures as biconnected graphs or noncrossing partitions whose generating series are defined implicitly.
Thus, the generating series $C(z)$ and $B(z)$ of connected and biconnected graphs, respectively, are linked by the relation~\cite[formula (1.3.3)]{HararyPalmer1973}
 \[
  \log C(z)=B'(zC'(z)),
 \]
while the (ordinary) generating functions $A(z)$ and $I(z)$ of, respectively, all and noncrossing partitions satisfy~\cite{Beissinger1985}
 \[
  A(z) = 1 + I\big(zA(z)\big).
 \]
To establish asymptotic expansions in these settings, Bender's theorem (Theorem~\ref{theorem: Bender}) is not applicable anymore.
A promising approach was proposed by Borinsky~\cite{Borinsky2018} who assembled the asymptotic coefficients into a new generating function and discovered general rules it obeys, including rules for compositions and inverses.
Unfortunately, Borinsky's method works only for factorially divergent series.
Some ideas of Borinsky were extended for graphically divergent series by Dovgal and one of the authors of this paper~\cite{DovgalNurligareev2022}.
However, the latter approach does not allow one to work with compositions and inverses, and therefore the asymptotics of biconnected graphs and other similar structures cannot be obtained by this way.
Further advances in this direction are still needed.

One final remark we would like to make concerns unlabeled structures.
In general, species theory is well-adapted for a unified study of both labeled and unlabeled combinatorial objects.
In this paper, we discuss only labeled structures, but in principle, in certain cases, our method also works for unlabeled structures; see~\cite[Section~7]{MonteilNurligareevSEQ}.
Creating a unified approach that embraces labeled and unlabeled cases under the same framework is another intriguing open problem.

\section*{Acknowledgments}
Khaydar Nurligareev was supported by the project PICS ANR-22-CE48-0002, as well as the ANR-FWF project PAnDAG ANR-23-CE48-0014-01, both funded by the Agence Nationale de la Recherche.

\bibliographystyle{abbrv}
\bibliography{bibliography}

\end{document}